\newcommand{\hlabel}{\phantomsection\label}
\newtheorem{Assumption}{Assumption}
\newtheorem{Claim}{Claim}
\newtheorem{Theorem}{Theorem}[section]
\newtheorem{Proposition}[Theorem]{Proposition}
\newtheorem{Remark}[Theorem]{Remark}
\newtheorem{Lemma}[Theorem]{Lemma}
\newtheorem{Definition}[Theorem]{Definition}
\newtheorem{Algorithm}{Algorithm}
\let\expandafter\oldproof\csname\string\proof\endcsname
\let\oldendproof\endproof
\renewenvironment{proof}[1][\proofname]{
\oldproof[\ttfamily\scshape \bf #1.]
}{\oldendproof}
\newcommand{\set}[1]{\left\{#1\right\}}
\def\rin{{\rm in}}
\def\rex{{\rm ex}}
\def\tilde{\widetilde}
\def\emp{\emptyset}
\def\dom{{\rm dom}\,}
\def\B{\mathbb B}
\def\ox{\overline{x}}
\def\disp{\displaystyle}
\def\Bar{\overline}
\def\epsilon{\varepsilon}
\def\ox{\bar{x}}
\def\C{\mathcal{C}}
\def\prox{\mbox{\rm Prox}}
\def\gph{\mbox{\rm gph}\,}
\def\dom{\mbox{\rm dom}\,}
\def\emp{\emptyset}
\def\st{\stackrel}
\def \N{{\rm I\!N}}
\def \R{{\rm I\!R}}
\newcommand{\dotproduct}[1]{\left\langle#1\right\rangle}
\newcommand{\brac}[1]{\left(#1\right)}
\newcommand{\sbrac}[1]{\left[#1\right]}
\newcommand{\abs}[1]{\left|#1\right|}
\newcommand{\norm}[1]{\left\|#1\right\|}
\numberwithin{equation}{section}
\title{Convergence of First-Order Algorithms with Momentum from the Perspective of an Inexact Gradient Descent Method}
\author{Pham Duy Khanh\footnote{Department of Mathematics, Ho Chi Minh City University of Education, Ho Chi Minh City, Vietnam. E-mail:  khanhpd@hcmue.edu.vn}\quad Boris S. Mordukhovich\footnote{Department of Mathematics, Wayne State University, Detroit, Michigan, USA. E-mail: aa1086@wayne.edu. Research of this author was partly supported by the US National Science Foundation under grant DMS-2204519 and by the Australian Research Council under grant DP-250101112.}\quad Dat Ba Tran\footnote{Department of Mathematics, Wayne State University, Detroit, Michigan, USA. E-mail: tranbadat@wayne.edu. Research of this author was partly supported by the US National Science Foundation under grant DMS-2204519.}}
\begin{document}
\maketitle
\vspace*{-0.3in}
\noindent
{\small{\bf Abstract}. This paper introduces a novel inexact gradient descent method with momentum (IGDm) considered as a general framework for various first-order methods with momentum. This includes, in particular, the inexact proximal point method (IPPm), extragradient method (EGm), and sharpness-aware minimization (SAMm). Asymptotic convergence properies of IGDm are established under both global and local assumptions on objective functions with providing constructive convergence rates depending on the Polyak-\L ojasiewicz-Kurdyka (PLK) conditions for the objective function. Global convergence of EGm and SAMm for general smooth functions and of IPPM for weakly convex functions is derived in this way. Moreover, local convergence properties of EGm and SAMm for locally smooth functions as well as of IPPm for prox-regular functions are established. Numerical experiments for derivative-free optimization problems are conducted to confirm the efficiency of the momentum effects of the developed methods under inexactness of gradient computations.}\\[1ex] 
{\bf Key words}: Inexact gradient descent and proximal point methods, extragradient method, sharpness-aware minimization, momentum, global and local convergence, local convergence \\[1ex]
{\bf Mathematics Subject Classification (2020)} 90C52, 90C56, 49J53, 90C25, 90C26\vspace*{-0.1in}

\section{Introduction}\label{intro}\vspace*{-0.05in}

This paper addresses the design and convergence analysis of first-order optimization algorithms with momentum for solving unconstrained optimization problems of the type
\begin{align}\label{main problem}
{\rm minimize}\quad f(x)\quad \text{ subject to }x\in\R^n,
\end{align}
where $f:\R^n\rightarrow\overline{\R}$ is a continuously differentiable function on some open and convex subset $X$ of $\R^n$ having a Lipschitz continuous gradient with constant $L>0$ on $X$, i.e.,
\begin{align*}
\norm{\nabla f(x)-\nabla f(y)}\le L\norm{x-y}\;\text{ for all }\;x,y\in X.
\end{align*}
First-order algorithms offer optimization techniques that leverage information of function values and gradients/subgradients while excluding Hessian information of the function. The most widely used first-order optimization methods/algorithms are the gradient descent (GD) method \cite{Cauchy1847,Polyak1987}, the inexact proximal point method (IPPM) \cite{Rockafellar1976} and its variants including the proximal gradient method (PGM) \cite{BeckTeboulle2009, Nesterov2013}, and the augmented Lagrangian method (ALM) \cite{Rockafellar1976a}. An increasing number of real-world applications can be modeled as large-scale optimization problems,  and thus first-order algorithms have become dominant due to their simplicity, flexibility, and moderate efficiency. Since the introduction of conjugate gradient methods, it has been realized that the gradient descent method can be improved by incorporating information of past iterations. Modern research on such extended first-order methods dates to Polyak \cite{Polyak1964} whose heavy ball method incorporates a momentum term into the gradient step. Over the years, the methods of this type, which are usually labeled as gradient descent methods with momentum, have  been developed intensively. The breakthrough achievements in this vein are Nesterov's acceleration methods \cite{Nesterov1983,Nesterov2018}. The momentum effect is incorporated more and more in optimization methods other than of the gradient descent type, e.g., in  the proximal point \cite{Guller1992} and proximal gradient methods \cite{BeckTeboulle2009,Nesterov2013}.

In this paper, we introduce a novel {\em inexact gradient descent method with momentum} (IGDm) for optimization of nonconvex functions. The momentum effect considered in our analysis is general and allows us to encompass Polyak's heavy ball methods \cite{Polyak1964}, different versions of Nesterov's acceleration \cite{Nesterov2018}, FISTA \cite{BeckTeboulle2009}, and the recent inertial algorithm introduced by L\'asl\'o \cite{Laslo}. The \textit{relative inexact condition} used in IGDm is not only standard and simple but also allows us to encompass fundamental and efficient first-order optimization methods including the inexact proximal point method \cite{Rockafellar1976}, Korpelevich's
extragradient method \cite{Korpelevich1976}, and a recently developed efficient optimization technique known as sharpness-aware minimization (SAM) \cite{AndriuschenkoFlammarion2022,ForetKleinerMobahiNeyshabur2021}. The relative inexact condition eventually has a broader range of applications than what will be theoretically demonstrated in this paper due to its appearance in the context of gradient-based methods for smooth functions, especially in derivative-free optimization scenarios. In such context, the exact gradient is often unavailable while leading us to the utilization of inexact gradients through approximation methods such as finite differences, linear interpolation, and Gupal estimation, which all are special cases of the inexact condition introduced in \cite{KhanhMordukhovichTran}. As shown in the recent publications \cite{KhanhMordukhovichPhatTran,KhanhMordukhovichTrana,KhanhMordukhovichTran2023}, the relative inexact condition is also efficient in the augmented Lagrangian method, inexact proximal gradient method, etc.

The convergence analysis for the IGDm method and its specifications developed below consists of two parts. The first part addresses the global version of IGDm for minimizing smooth functions with gradients being Lipschitz continuous globally on $\R^n$, i.e., of class $\mathcal{C}^{1,1}_L$. Convergence results obtained for this class include stationarity of accumulation points in a general setting as well as global convergence with constructive convergence rates under appropriate PLK conditions for objective functions. The second part of our analysis focuses on local convergence of IGDm when objective functions are smooth and their gradients are locally Lipschitzian around local minimizers, i.e., of class $\mathcal{C}^{1,1}$. For such functions, we establish local convergence with constructive convergence rates for IGDm under PLK conditions at local minimizers. Observe that the imposed PLK conditions constitute rather mild regularity properties that often hold for major classes of optimization problems and applications; see Section~\ref{sec prelim} for more discussions.

To demonstrate the universality of our analysis, we also introduce and simultaneously investigate convergence properties of the extragradient method with momentum (EGm), sharpness-aware minimization with momentum (SAMm), and inexact proximal point method with momentum (IPPm). The table below indicates the classes of functions and the type of convergence achieved for these methods.

\begin{table}[H]
\centering
\begin{tblr}{
column{3} = {c},
column{4} = {c},
cell{2}{1} = {r=2}{},
cell{4}{1} = {r=2}{},
cell{6}{1} = {r=2}{},
cell{8}{1} = {r=2}{},
}\hline
     & Version & Class of functions            & Result  \\\hline
IGDm & Global  & $\C^{1,1}_L$                           & Theorem~\ref{global IGDm}\\\hline
     & Local   & $\C^{1,1}$ + PLK                      & Theorem~\ref{local con IGDm}\\\hline
EGm  & Global  & $\C^{1,1}_L$                            & Theorem~\ref{glo EG}\\\hline
     & Local   & $\C^{1,1}$ + PLK                      & Theorem~\ref{lo EG} \\\hline
SAMm & Global  & $\C^{1,1}_L$                            & Theorem~\ref{glo SAM} \\\hline
     & Local   & $\C^{1,1}$ + PLK                      & Theorem~\ref{lo SAM} \\\hline
IPPm & Global   & Weakly convex                 & Theorem~\ref{theo IPPM global} \\\hline
     & Local  & Prox-regular + semi-algebraic & Theorem~\ref{lo IPPm}\\\hline
\end{tblr}
\caption{Convergence results for first-order methods with momentum}
\end{table}\vspace*{-0.1in}
Let us now discuss the novelty and the importance of the obtained convergence results in comparison with other related studies. It should be emphasized first that, to the best of our knowledge, all the \textit{local convergence} results presented here are {\em entirely new}. Concerning proximal point methods, local convergence analysis for prox-regular functions remains an open question; the largest class of functions considered recently in \cite{Rockafellar2021,Rockafellar2023} consists of variationally convex ones, which form a strict subclass of prox-regular functions. The result in \cite[Theorem~4.2]{AttouchBolteSvaiter2013} addresses the convergence of an inexact proximal point method under a certain continuity assumption. Due to that assumption, this result fails when considering $\ell_0$-norm functions, which are typical examples of variational convex functions \cite{KhanhMordukhovichPhat2023} and prox-regular functions. Subsection~\ref{subsection IPPM} below provides a local study of the inexact proximal point method for prox-regular functions while examining not only its standard version but also variants when  it is incorporated with different types of momentum. Similarly, Subsections~\ref{subsection EGm} and \ref{SAMm} establish fundamental local convergence properties of the classical extragradient method \cite{Korpelevich1976} and the efficiently developed algorithms of 
sharpness-aware minimization \cite{ForetKleinerMobahiNeyshabur2021}. Such methods are especially important for applications to training processes of deep neural networks, AI, and other practical models; see, e.g., \cite{ForetKleinerMobahiNeyshabur2021,LinKongStichJaggi2020}.

Regarding {\em global convergence} results, the general framework of IGDm allows us to extend convergence behavior of the extragradient method for unconstrained nonconvex problems to its accelerated versions by employing a simple proof in Theorem~\ref{glo EG}. These new results are more advanced in comparison with those in \cite[Section~3]{NguyenPauwelsRichardSutter2018} in the sense that we now cover accelerated versions while the latter results only consider the basic version. Our results also remove the boundedness assumptions in \cite[Theorem~3.1]{NguyenPauwelsRichardSutter2018}. It should be emphasized that although the original extragradient method in \cite{Korpelevich1976} was designed for variational inequality problems, it has recently attracted attention to solve smooth unconstrained problems of machine learning \cite{LinKongStichJaggi2020}. Our global results on inexact proximal point methods also extend the recent ones in \cite{Kim2021} from convex to weakly convex functions. The newly obtained results provide convergence rates for the iterative sequence in Theorem~\ref{theo IPPM global}, which were not established in \cite{Kim2021}.

Yet another contribution of this paper is demonstrating numerical advantages of the proposed algorithms. To illustrate this, we examine the performance of IGDm methods to solve derivative-free optimization problems in both convex and nonconvex settings. Our findings indicate a general superiority of newly developed methods over the basic version of IGD (inexact gradient descent) method without momentum. As discussed in \cite{KhanhMordukhovichTrana,ShiXuanOztoprakNocedal2023}, IGD in general outperforms other well-developed methods in derivative-free optimization including FMINSEARCH, i.e., the Nelder–Mead simplex-based method from \cite{LagariasReedsWrightWright1998}, the implicit filtering algorithms \cite{GilmoreKelley1995}, and the random gradient-free algorithm for smooth optimization proposed by Nesterov and Spokoiny \cite{NesterovSpokoiny2017}. As a consequence, IGDm can be recommended as a preferable optimizer for derivative-free smooth (convex and nonconvex) optimization problems.\vspace*{0.03in}

The rest of the paper is organized as follows. Section~\ref{sec prelim} presents some preliminaries and auxiliary results used throughout the entire paper. In Section~\ref{sec: IGDm}, we introduce the inexact gradient descent method with momentum and provide its comprehensive global and local convergence analysis. Section~\ref{sec: illustrate} derives global and local convergence properties for a broad spectrum of first-order methods with momentum including the extragradient method, sharpness-aware minimization, and proximal point method, which are based on the analysis of IGDm. Numerical experiments demonstrating the efficiency of the momentum effect in IGDm for derivative-free optimization problems are presented and analyzed in Section~\ref{sec: numerical}. In Section~\ref{conc}, we summarize the main achievements of the paper and discuss some directions of our future research.\vspace*{-0.15in}

\section{Preliminaries and Auxiliary Results}\label{sec prelim}\vspace*{-0.05in}

First we recall some notions and notations frequently used in the paper. All our considerations are given in the space $\R^n$ with the Euclidean norm $\|\cdot\|$. We use the matrix norm given by
\begin{equation*}
\norm{A}:=\max\big\{\norm{Ax}\;\big|\;\norm{x}=1\big\}\;\text{ for any }\;m\times n \text{ matrix }A.
\end{equation*}
Along with the Euclidean norm $\norm{\cdot}$, we also utilize the $\ell_1$-norm $\norm{\cdot}_1$ defined by
\begin{equation*}
\disp\norm{x}_1=\sum_{k=1}^n|x_k|\;\text{ for all }\;x=(x_1,\ldots,x_n)\in\R^n.
\end{equation*}
As always, $\N:=\{1,2,\ldots\}$ signifies the collections of natural numbers. Given $r>0$, the open ball centered at $\bar x$ with radius $r$ is denoted by $\mathbb{B}(\bar x,\rho)$ and its closure is $\overline{\mathbb{B}}(\bar x,\rho)$.
The symbol $x^k\st{J}{\to}\ox$ means that $x^k\to\ox$ as $k\to\infty$ with $k\in J\subset\N$. Recall that $\bar x$ is a \textit{stationary point} and that $f(\bar x)$ is a \textit{critical value} of a $\mathcal{C}^1$-smooth function $f\colon\R^n\rightarrow\R$ if $\nabla f(\bar x)=0$. A $\C^1-$smooth function $f:\Omega\rightarrow {\R}$ is said to have a Lipschitz continuous gradient with the uniform constant $L>0$ on some subset $\Omega$ of $\R^n$ if 
\begin{align}\label{Lips def}
\norm{\nabla f(x)-\nabla f(y)}\le L\norm{x-y}\;\text{ for all }\;x,y\in\Omega.
\end{align}
A function $f\colon\R^n\rightarrow\R$ is {\em locally Lipschitz continuous} around $\bar x$  if there is a neighborhood $\Omega$ of $\bar x$ such that \eqref{Lips def} holds; $f$ {\em globally Lipschitz continuous} if \eqref{Lips def} holds for $\Omega=\R^n$. In what follows, we denote by $\mathcal{C}^{1,1}$ the class of smooth functions that have a {\em locally Lipschitz continuous gradient} around $\bar x$, and by $\mathcal{C}^{1,1}_L$ the class of smooth functions that have a {\em globally Lipschitz continuous gradient with constant} $L>0$.

Next we recall the subdifferential constructions for proper extended-real-valued functions $h:\R^n\rightarrow\Bar\R:=(-\infty,\infty]$ with $\dom h:=\{x\in\R^n\;|\;h(x)<\infty\}\ne\emp$ taken from \cite{Mordukhovich2006,Mordukhovich2018,RockafellarWets} and used below. The (Fr\'echet) \textit{regular subdifferential} of $ h $ at $\ox \in\dom  h $ is 
\begin{align}\label{frechet}
\widehat{\partial} h (\bar x):=\Big\{v\in \R^n\;\Big|\;\liminf_{x\rightarrow\bar x}\dfrac{ h (x)- h (\bar x)-\dotproduct{v,x-\bar x}}{\norm{x-\bar x}}\ge 0\Big\},
\end{align}
and the (Mordukhovich) \textit{limiting subdifferential} of $ h $ at $\bar x\in\dom  h $ is defined by 
\begin{align}\label{limiting subdiff}
\partial  h (\bar x):=\set{v\in\R^n\;\big|\;\exists\,x^k\rightarrow \ox,\  h (x^k)\rightarrow h (\bar x),\ \widehat{\partial}h(x^k)\ni v^k\rightarrow v}.
\end{align}
When $ h $ is $\mathcal{C}^1$-smooth around $\bar x$, both subdifferentials in \eqref{frechet} and \eqref{limiting subdiff} reduce to the gradient $\nabla  h (\bar x)$, while for convex functions they agree with the classical subdifferential of convex analysis. A necessary condition for $\bar x\in\R^n$ to be a local minimizer of $ h $ is 
$0\in \partial h (\bar x)$. Any point $\bar x$ satisfying the latter condition is known in the literature as an {\em M$($ordukhovich$)$-stationary} (or {\em limiting-stationary}) point of $h$. Since this paper does not deal with other stationary concepts for nonsmooth functions, we simply use the {\em stationary} point term in what follows.

The convergence analysis of first-order methods developed in the subsequent sections largely employs the following important notions and results. To begin with, we recall a useful property of functions with Lipschitzian gradients taken from \cite[Lemma~A.11]{IzmailovSolodov2014}.

\begin{Lemma}\label{lemma descent}
Given $f:\R^n\rightarrow\R$ and $x,y\in\Omega$, assume that $f$ is differentiable on the line segment $[x,y]$ and its derivative is Lipschitz continuous on this segment with constant $L>0$. Then we have
\begin{align}\label{eq descent}
\abs{f(y)-f(x)-\dotproduct{\nabla f(x),y-x}}\le \frac{L}{2}\norm{y-x}^2.
\end{align}
\end{Lemma}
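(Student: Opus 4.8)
The plan is to reduce this multivariate statement to a one-dimensional integration along the segment $[x,y]$ and then apply the Lipschitz hypothesis together with the Cauchy--Schwarz inequality. Concretely, I would introduce the auxiliary scalar function $\varphi\colon[0,1]\to\R$ defined by $\varphi(t):=f\big(x+t(y-x)\big)$. Since $f$ is differentiable on $[x,y]$ by assumption, the chain rule gives that $\varphi$ is differentiable on $[0,1]$ with $\varphi'(t)=\dotproduct{\nabla f\big(x+t(y-x)\big),y-x}$. Because the gradient is Lipschitz (hence continuous) on the segment, $\varphi'$ is continuous, so the fundamental theorem of calculus applies and yields $f(y)-f(x)=\varphi(1)-\varphi(0)=\int_0^1\varphi'(t)\,\d t$.

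The second step is to subtract off the linear term. Writing $\dotproduct{\nabla f(x),y-x}=\int_0^1\dotproduct{\nabla f(x),y-x}\,\d t$ and combining with the previous identity gives
\begin{align*}
f(y)-f(x)-\dotproduct{\nabla f(x),y-x}=\int_0^1\dotproduct{\nabla f\big(x+t(y-x)\big)-\nabla f(x),\,y-x}\,\d t.
\end{align*}
Taking absolute values, passing the modulus inside the integral, and applying the Cauchy--Schwarz inequality to the inner product bounds the left-hand side by $\int_0^1\norm{\nabla f(x+t(y-x))-\nabla f(x)}\,\norm{y-x}\,\d t$.

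For the final step I would invoke the Lipschitz hypothesis \eqref{Lips def}: since $x+t(y-x)$ and $x$ both lie on the segment $[x,y]$, we have $\norm{\nabla f(x+t(y-x))-\nabla f(x)}\le L\norm{t(y-x)}=Lt\norm{y-x}$. Substituting this estimate and computing $\int_0^1 t\,\d t=\tfrac12$ produces the desired bound $\tfrac{L}{2}\norm{y-x}^2$.

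There is no serious obstacle here; the result is the classical descent lemma. The only points requiring a word of care are the justification that $\varphi$ is genuinely differentiable with the stated derivative (guaranteed by differentiability of $f$ along the segment) and that the fundamental theorem of calculus is legitimately applicable (guaranteed by continuity of $\varphi'$, which follows from Lipschitz continuity of $\nabla f$ on $[x,y]$). Once these regularity points are noted, the remaining computation is the routine chain of Cauchy--Schwarz, the Lipschitz estimate, and an elementary integration.
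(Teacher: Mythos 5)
Your proof is correct and is the standard Newton--Leibniz argument (parametrize the segment, apply the fundamental theorem of calculus, then Cauchy--Schwarz and the Lipschitz bound), which is exactly how this classical descent lemma is established in the reference the paper cites. The paper itself gives no proof, simply quoting the result from Izmailov and Solodov, so there is nothing to compare beyond noting that your argument matches the textbook one.
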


The next notion plays a fundamental role in convergence analysis of algorithms. The reader is referred to \cite{bento25} for the history of such conditions and to \cite{AbsilMahonyAndrews2005,AttouchBolteRedontSoubeyran2010,karimi,LiPong2018} for alternative names and further details.

\begin{Definition}\rm \label{KL ine}\rm We say that a lower semicontinuous (l.s.c.) function $f:\R^n\rightarrow\overline{\R}$ satisfies the $($basic$)$ \textit{Polyak-\L ojasiewicz-Kurdyka} $(PLK)$ condition at $\bar x\in\dom \partial f$ if there exist $\eta\in (0,\infty]$, a neighborhood $U$ of $\bar x$, and a desingularizing concave function $\varphi:[0,\eta)\rightarrow[0,\infty)$ such that:

{\bf(i)} $\varphi(0)=0$.

{\bf(ii)} $\varphi$ is $\mathcal{C}^1$-smooth on $(0,\eta)$.

{\bf(iii)} $\varphi'>0$ on $(0,\eta)$.

{\bf(iv)} For all $x\in U$ with $f(\bar{x})<f(x)<f(\bar{x})+\eta$, we have 
\begin{align}\label{KL 2}
\varphi'\big(f(x)-f(\bar{x})\big)d(0,\partial f(x))\ge 1.
\end{align}
\end{Definition}

\begin{Remark}\rm\label{algebraic}If $f$ satisfies the PLK property at $\bar x$ with a neighborhood $U$, it is clear that the same property holds for any $\tilde x\in U\cap {\rm dom\; }\partial f$ satisfying $f(\tilde x)=f(\bar x).$
It has been realized that the PLK condition is satisfied in rather  broad settings. In particular, it holds at every 
nonstationary point of $f$, for semianalytic and more general functions known as definable in o-minimal structures; see, e.g., \cite{AbsilMahonyAndrews2005,AttouchBolteRedontSoubeyran2010,AttouchBolteSvaiter2013} and the references therein. As demonstrated in \cite[Section~2]{KhanhMordukhovichTran2023}, the PLK condition formulated in \cite{AttouchBolteRedontSoubeyran2010} is stronger than the one in Definition~\ref{KL ine}.
\end{Remark}

Note that condition (ii) ensures that, for any fixed $x\in(0,\eta)$ and $a\in(0,x]$, the function $\varphi'$ is integrable on $[a,x]$ and thus $\varphi(x)=\varphi(a)+\int_a^{x}\varphi'(t)dt$. Taking the limit as $a\rightarrow0^+$ and using  
$\varphi(0)=0$ give us $\varphi(x)=\int_0^{x}\varphi'(t)dt$. The following result plays an important role in our local convergence analysis.

\begin{Lemma}\label{isolation lemma}
Let $f:\R^n\rightarrow\overline{\R}$ be continuous around some local minimizer $\bar x$ and satisfy the PLK condition at $\bar x$. Then there is a neighborhood $U$ of $\bar x$ such that $f(\bar x)$ is the unique critical value of $f$ in $U$.
\end{Lemma}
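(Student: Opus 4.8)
The plan is to leverage the fact that at any stationary point $x$ one has $d(0,\partial f(x))=0$, which collides with the PLK inequality \eqref{KL 2} and thereby forces $f(x)=f(\bar x)$.

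First I would assemble a single neighborhood $U$ of $\bar x$ on which three properties hold at once: since $\bar x$ is a local minimizer, $f(x)\ge f(\bar x)$ there; the PLK inequality from clause (iv) of Definition~\ref{KL ine} is available there; and, by continuity of $f$ at $\bar x$ together with $\eta>0$, the strict bound $f(x)<f(\bar x)+\eta$ holds there. Concretely, I would start with the PLK neighborhood, intersect it with a neighborhood on which $\bar x$ minimizes $f$, and shrink to a ball $\mathbb{B}(\bar x,\delta)$ small enough that $f(x)<f(\bar x)+\eta$ for every $x$ in it.

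Next I would record that $\bar x$ is itself stationary: as a local minimizer it satisfies the necessary condition $0\in\partial f(\bar x)$, so $f(\bar x)$ is genuinely a critical value of $f$ in $U$, and it remains only to prove that no other critical value occurs.

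The decisive step is then a short contradiction argument. Let $x\in U$ be any stationary point, so that $0\in\partial f(x)$ and hence $d(0,\partial f(x))=0$. Local minimality gives $f(x)\ge f(\bar x)$. If this inequality were strict, then $f(\bar x)<f(x)<f(\bar x)+\eta$, so clause (iv) applies and yields $\varphi'\big(f(x)-f(\bar x)\big)\,d(0,\partial f(x))\ge 1$; but the left-hand side equals $\varphi'(\cdot)\cdot 0=0$, which is impossible. Hence $f(x)=f(\bar x)$, establishing that $f(\bar x)$ is the unique critical value in $U$. I do not expect a genuine obstacle here; the only point demanding care is the first step, where continuity of $f$ (rather than mere lower semicontinuity) is what secures the upper bound $f(x)<f(\bar x)+\eta$ that makes clause (iv) applicable, and where local minimality is exactly what rules out a spurious critical value lying strictly below $f(\bar x)$.
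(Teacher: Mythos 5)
Your proposal is correct and follows essentially the same route as the paper's proof: shrink the PLK neighborhood using continuity and local minimality so that $f(\bar x)\le f(x)<f(\bar x)+\eta$, then observe that the PLK inequality forces $d(0,\partial f(x))>0$ whenever $f(x)\ne f(\bar x)$. Your contradiction framing and the explicit remark that $\bar x$ itself is stationary are just cosmetic variations on the paper's contrapositive statement.
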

\begin{proof}
The PLK condition for $f$ at $\bar x$ gives us $\eta\in(0,\infty],$ a neighborhood $U$ of $\bar x$, and a function $\varphi:[0,\eta)\rightarrow[0,\infty)$ satisfying all the conditions in Definition~\ref{KL ine}. Since $f$ is continuous around the local minimizer $\bar x,$ we assume, by shrinking $U$ if necessary, that $f(\bar x)\le f(x)<f(\bar x)+\eta.$ Then condition (iv) in Definition~\ref{KL 2} can be rewritten as
\begin{align*}
\varphi'(f(x)-f(\bar x))d(0,\partial f(x))\ge 1\text{ whenever }x\in U,\;f(x)\ne f(\bar x).
\end{align*}
Therefore, $0\notin \partial f(x)$ for all such $x$, i.e., $f(\bar x)$ is the unique critical value of $f$ on $U.$ 
\end{proof}
 
Based on \cite{AbsilMahonyAndrews2005}, we now present some descent conditions ensuring the global convergence of iterates for smooth functions satisfying the PLK property. 
 
\begin{Proposition}\label{general convergence under KL}
Let $f:\R^n\rightarrow\R$ be a $\mathcal{C}^1$-smooth function, and let the following conditions hold along a sequence of iterates $\set{x^k}\subset\R^n$ for the function $f$:
\begin{itemize}
\item[\bf(H1)] {\rm(primary descent condition)}. There exists $\sigma>0$ such that for sufficiently large $k\in\N$, we have 
\begin{align*}
 f(x^k)-f(x^{k+1})\ge\sigma\norm{\nabla f(x^k)}\cdot\norm{x^{k+1}-x^k}.
\end{align*}
\item[\bf(H2)] {\rm(complementary descent condition)}. For sufficiently large $k\in\N$, we have
\begin{align*}
 \big[f(x^{k+1})=f(x^k)\big]\Longrightarrow [x^{k+1}=x^k].
\end{align*}
\end{itemize}
If $\bar x$ is an accumulation point of $\set{x^k}$  and $f$ satisfies the PLK property at $\bar x$, then $x^k\rightarrow\bar x$ as $k\to\infty$.
\end{Proposition}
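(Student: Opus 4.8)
The plan is to prove that $\{x^k\}$ is a Cauchy sequence by establishing that $\sum_k\norm{x^{k+1}-x^k}<\infty$; since $\bar x$ is an accumulation point, summability forces the whole sequence to converge to $\bar x$. First I would note that (H1) makes $\{f(x^k)\}$ non-increasing for large $k$, because its right-hand side is nonnegative. As $\bar x$ is an accumulation point and $f$ is continuous, some subsequence gives $f(x^{k_j})\to f(\bar x)=:f^*$, so the monotone sequence $\{f(x^k)\}$ decreases to $f^*$ and stays $\ge f^*$. I would then split into two cases. If $f(x^{k_0})=f^*$ for some $k_0$, then $f(x^k)=f^*$ for all $k\ge k_0$, so (H2) yields $x^{k+1}=x^k$ for all such $k$; the sequence is eventually constant, and having $\bar x$ as accumulation point it must equal $\bar x$, giving convergence at once.

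In the main case $f(x^k)>f^*$ for all large $k$, I set $r^k:=f(x^k)-f^*\to 0^+$, with $r^k>0$ and $\{r^k\}$ non-increasing. Since $f$ is $\mathcal{C}^1$, one has $d(0,\partial f(x^k))=\norm{\nabla f(x^k)}$, so whenever $x^k$ lies in the PLK neighborhood $U$ the inequality \eqref{KL 2} gives $\varphi'(r^k)\norm{\nabla f(x^k)}\ge 1$, i.e. $\norm{\nabla f(x^k)}\ge 1/\varphi'(r^k)$. Substituting this into (H1) produces the basic estimate $\norm{x^{k+1}-x^k}\le\sigma^{-1}\varphi'(r^k)\brac{r^k-r^{k+1}}$. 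Exploiting the concavity of $\varphi$ (so $\varphi'$ is non-increasing) together with $r^{k+1}\le r^k$, I would bound $\varphi'(r^k)\brac{r^k-r^{k+1}}\le\int_{r^{k+1}}^{r^k}\varphi'(t)\,dt=\varphi(r^k)-\varphi(r^{k+1})$, arriving at the telescoping inequality $\norm{x^{k+1}-x^k}\le\sigma^{-1}\brac{\varphi(r^k)-\varphi(r^{k+1})}$.

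The hard part is that this estimate is valid only while $x^k$ remains inside $U$, so I must guarantee that the iterates never escape. I would fix $\rho>0$ with $\overline{\mathbb{B}}(\bar x,\rho)\subset U$, and then, using the accumulation-point property together with $\varphi(0)=0$, continuity of $\varphi$, and $r^k\to 0$, select a large index $N$ (taken along the convergent subsequence) satisfying simultaneously $r^N<\eta$, $\norm{x^N-\bar x}<\rho/2$, and $\sigma^{-1}\varphi(r^N)<\rho/2$. An induction on $k\ge N$ then shows $x^k\in\mathbb{B}(\bar x,\rho)\subset U$: assuming $x^N,\ldots,x^k$ lie in the ball, the telescoped sum gives $\norm{x^{k+1}-\bar x}\le\norm{x^N-\bar x}+\sum_{j=N}^{k}\norm{x^{j+1}-x^j}\le\rho/2+\sigma^{-1}\varphi(r^N)<\rho$, which closes the induction. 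This trapping argument is the crux of the proof, since it is precisely what legitimizes applying the local PLK inequality at every subsequent step.

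Once the iterates are confined to $U$, the telescoping inequality holds for all $k\ge N$, and summing gives $\sum_{j=N}^{\infty}\norm{x^{j+1}-x^j}\le\sigma^{-1}\varphi(r^N)<\infty$. Therefore $\{x^k\}$ is Cauchy and converges to some limit; because $\bar x$ is an accumulation point, that limit is $\bar x$, so $x^k\to\bar x$ and the proof is complete.
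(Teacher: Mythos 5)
Your proof is correct and complete: the value monotonicity from (H1), the case split handled by (H2), the desingularization/telescoping bound $\norm{x^{k+1}-x^k}\le\sigma^{-1}\bigl(\varphi(r^k)-\varphi(r^{k+1})\bigr)$, and the trapping induction that keeps the iterates inside the PLK neighborhood are all sound. The paper gives no proof of this proposition, deferring to \cite{AbsilMahonyAndrews2005}; your argument is exactly the standard one underlying that reference, so it supplies precisely what the paper leaves to the citation.
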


The next proposition, which provides convergence rates for sequences of iterates under the PLK property from Definition~\ref{KL ine} with $\varphi(t) = Mt^{1 - q}$, $q\in(0,1)$, follows from \cite[Theorem~4]{bento25}, where the reader can find more references and discussions.

\begin{Proposition}\label{general rate}
Let $f:\R^n\rightarrow\R$ be a $\mathcal{C}^1$-smooth function, and let $\set{x^k}$ satisfy the conditions 
\begin{align}\label{two conditions}
f(x^k)-f(x^{k+1})\ge \alpha\norm{x^{k+1}-x^k}^2\;\text{ and }\;\norm{\nabla f(x^k)}\le \beta\norm{x^{k+1}-x^k}
\end{align}
for all $k\in\N$ sufficiently large with some constants $\alpha,\beta>0$. Suppose  in addition that $\bar x$ is an accumulation point of $\set{x^k}$ and that $f$ satisfies the $($exponential$)$ PLK property at $\bar x$ with $\varphi(t)=Mt^{1-q}$ for some $M>0$ and $q\in(0,1)$. Then the following convergence rates are guaranteed:

{\bf (i)} For $q\in(0,1/2)$, the sequence $\set{x^k}$ terminates at $\bar x$ in finite steps.

{\bf (ii)} For $q=1/2$, the sequence $\set{x^k}$ converges linearly to $\bar x$ as $k\to\infty$.

{\bf (iii)} For $q\in(1/2,1)$, we have  $\norm{x^k-\bar x}=\mathcal{O}( k^{-\frac{1-q}{2q-1}})$ as $k\to\infty$
\end{Proposition}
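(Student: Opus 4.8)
The plan is to reduce the whole statement to a scalar recursion for the function-value gaps $\Delta_k:=f(x^k)-f(\bar x)\ge 0$ and then run the classical \L ojasiewicz-type rate analysis separately in the three regimes $2q<1$, $2q=1$, $2q>1$; this reconstructs the content of \cite[Theorem~4]{bento25}. First I would record the preliminary reductions. The two conditions in \eqref{two conditions} immediately imply both \textbf{(H1)} (with $\sigma=\alpha/\beta$, since $\norm{\nabla f(x^k)}\cdot\norm{x^{k+1}-x^k}\le\beta\norm{x^{k+1}-x^k}^2\le\frac{\beta}{\alpha}(f(x^k)-f(x^{k+1}))$) and \textbf{(H2)} (if $f(x^{k+1})=f(x^k)$ the first condition forces $\norm{x^{k+1}-x^k}=0$). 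Hence Proposition~\ref{general convergence under KL} already gives $x^k\to\bar x$, while the first condition makes $\{f(x^k)\}$ nonincreasing, so by continuity $\Delta_k\downarrow 0$. For large $k$ we have $x^k\in U$, so as long as $\Delta_k>0$ the PLK inequality \eqref{KL 2} applies; using $d(0,\partial f(x^k))=\norm{\nabla f(x^k)}$ (smoothness) and $\varphi'(t)=M(1-q)t^{-q}$, it reads $M(1-q)\Delta_k^{-q}\norm{\nabla f(x^k)}\ge 1$, i.e. the gradient lower bound $\norm{\nabla f(x^k)}\ge \Delta_k^{\,q}/(M(1-q))$.

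Next I would derive the two engines of the proof. Combining this gradient lower bound with the relative-error condition $\norm{\nabla f(x^k)}\le\beta\norm{x^{k+1}-x^k}$ and the descent condition $\alpha\norm{x^{k+1}-x^k}^2\le\Delta_k-\Delta_{k+1}$ to eliminate $\norm{x^{k+1}-x^k}$ yields the scalar recursion $\Delta_k^{\,2q}\le C(\Delta_k-\Delta_{k+1})$ with $C:=M^2(1-q)^2\beta^2/\alpha$. Separately, concavity of $\varphi$ gives $\varphi(\Delta_k)-\varphi(\Delta_{k+1})\ge\varphi'(\Delta_k)(\Delta_k-\Delta_{k+1})$, and inserting $\varphi'(\Delta_k)\ge 1/\norm{\nabla f(x^k)}\ge 1/(\beta\norm{x^{k+1}-x^k})$ together with the descent condition produces the step bound $\norm{x^{k+1}-x^k}\le\frac{\beta}{\alpha}\big[\varphi(\Delta_k)-\varphi(\Delta_{k+1})\big]$. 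Summing this telescoping bound over $k\ge K$ and using $x^k\to\bar x$ with $\varphi(0)=0$ gives the key length estimate $\norm{x^K-\bar x}\le\frac{\beta}{\alpha}\varphi(\Delta_K)=\frac{\beta M}{\alpha}\Delta_K^{\,1-q}$.

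Finally I would analyze the recursion with $p:=2q$. If $p<1$, then once $\Delta_k<C^{-1/(1-p)}$ (which happens for large $k$ since $\Delta_k\downarrow 0$) the recursion forces $\Delta_{k+1}\le\Delta_k-\Delta_k^{\,p}/C<0$ unless $\Delta_k=0$; hence $\Delta_{k}=0$ after finitely many steps, the descent condition then freezes the iterates, and $x^k\to\bar x$ identifies the terminal point as $\bar x$, giving \textbf{(i)}. If $p=1$, the recursion reads $\Delta_{k+1}\le(1-1/C)\Delta_k$, so $\Delta_k$ decays geometrically and the length estimate $\norm{x^K-\bar x}\le\frac{\beta M}{\alpha}\Delta_K^{1/2}$ turns this into linear convergence of the iterates, giving \textbf{(ii)}. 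If $p>1$, the integral comparison $\psi(\Delta_{k+1})-\psi(\Delta_k)\ge\frac1C$ for $\psi(t)=t^{1-p}/(p-1)$ (using $\Delta_k-\Delta_{k+1}\ge\Delta_k^{\,p}/C$ and monotonicity of $s\mapsto s^{-p}$) telescopes to $\Delta_k^{\,1-p}\gtrsim k$, i.e. $\Delta_k=\mathcal{O}(k^{-1/(2q-1)})$; substituting into the length estimate yields $\norm{x^k-\bar x}=\mathcal{O}(k^{-(1-q)/(2q-1)})$, giving \textbf{(iii)}.

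The main obstacle is the sublinear regime $q\in(1/2,1)$, where the scalar recursion does not collapse to a geometric one and must be integrated via the $\psi$-comparison, after which one still has to transfer the rate of $\Delta_k$ to a rate of $\norm{x^k-\bar x}$ through the summable-step length bound rather than directly. A secondary subtlety is the bookkeeping of the degenerate case $\Delta_k=0$: there the PLK inequality is inapplicable, but the descent condition itself freezes the sequence, and it is precisely this mechanism that produces the finite termination in \textbf{(i)}.
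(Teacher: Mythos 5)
Your proof is correct. Note that the paper does not actually prove this proposition: it is stated as a consequence of an external result (Theorem~4 of the cited reference \texttt{bento25}), so there is no internal argument to compare against. What you have written is the standard self-contained Kurdyka--\L ojasiewicz rate machinery, and all the pieces fit: the gradient lower bound $\norm{\nabla f(x^k)}\ge \Delta_k^{q}/(M(1-q))$ combined with the two conditions in \eqref{two conditions} gives the scalar recursion $\Delta_k^{2q}\le C(\Delta_k-\Delta_{k+1})$; concavity of $\varphi$ gives the telescoping step bound and hence $\norm{x^K-\bar x}\le\frac{\beta M}{\alpha}\Delta_K^{1-q}$; and the three-regime analysis of the recursion (contradiction for $2q<1$, geometric decay for $2q=1$, integral comparison with $\psi(t)=t^{1-p}/(p-1)$ for $2q>1$) is carried out correctly — in particular your direct integral comparison $\int_{\Delta_{k+1}}^{\Delta_k}t^{-p}\,dt\ge(\Delta_k-\Delta_{k+1})\Delta_k^{-p}$ is valid and avoids the case split sometimes seen in the literature. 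Two minor points worth making explicit if you write this up: the PLK inequality is only available once $x^k\in U$ and $0<\Delta_k<\eta$, which you should justify from $x^k\to\bar x$ (already secured via Proposition~\ref{general convergence under KL}) and $\Delta_k\downarrow 0$; and in the case $q=1/2$ the constant $C$ may satisfy $C\le 1$, in which case the recursion forces $\Delta_{k+1}=0$ and the conclusion holds trivially by finite termination.
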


\begin{Remark}\rm\label{rmk general rate} It is easy to see  that condition \eqref{two conditions} in Proposition~\ref{general rate} yields the fulfillment of (H2) and of (H1) with $\sigma=\frac{\alpha}{\beta}$  in Proposition~\ref{general convergence under KL}. Proposition~\ref{general rate} obviously holds true even for extended-real-valued functions $f$ provided that $\{x^k\}$ is convergent and $f$ is ${\cal C}^1$-smooth on an open set containing $\{x^k\}$.
\end{Remark}

To proceed further, we recall the following fundamental construction that is usually employed in the analysis of momentum methods. Given a number $\alpha>0$, consider the {\em Lyapunov function} $H_\alpha:\R^n\times \R^n\rightarrow\R$ associated with the objective function $f$ that is defined by
\begin{align}\label{lyapunov func}
H_\alpha(x,y):=f(x)+\alpha\norm{x-y}^2\text{ for all }x,y\in\R^n.
\end{align}
To the best of our knowledge, function \eqref{lyapunov func} was first used by Zavriev and Kostyuk \cite{ZavrievKostyuk1993} in their study of convergence properties of Polyak's heavy ball method. More recently, this function has been widely utilized for the convergence analysis of exact first-order methods that incorporate momentum; see, e.g., \cite{Josz2023,WenChenPong2017} with more references and discussions therein.\vspace*{0.03in}

The next result, which follows from \cite[Theorem~3.3 and Corollary~3.5]{WangWang2023}, concerns the PLK property of the Lyapunov function $H_\alpha$ provided that the objective function $f$ satisfies this property. 

\begin{Proposition}\label{KL Lya global}
Let $f:\R^n\rightarrow\overline{\R}$ be ${\cal C}^1$-smooth around $\bar x$ and $\alpha>0.$ Then the following hold:
    
{\bf(i)} $H_\alpha$ satisfies the PLK property at $(\bar x,\bar x)$.
    
{\bf(ii)} If the PLK exponent of $f$ at $\bar x$ is $q\in [0,1)$, then the PLK exponent of the Lyapunov function  $H_\alpha$ at $(\bar x,\bar x)$ is $\max \set{q,1/2}.$
\end{Proposition}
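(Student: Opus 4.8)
The plan is to work directly with $H_\alpha$ rather than passing through $f$ in a separate coordinate system. Since $f$ is $\mathcal{C}^1$ around $\bar x$, the Lyapunov function $H_\alpha$ is $\mathcal{C}^1$ around $(\bar x,\bar x)$, so $d(0,\partial H_\alpha(x,y))=\norm{\nabla H_\alpha(x,y)}$ and the PLK inequality (iv) of Definition~\ref{KL ine} for $H_\alpha$ becomes a statement about $\norm{\nabla H_\alpha}$. I would first record the two identities $\nabla H_\alpha(x,y)=\big(\nabla f(x)+2\alpha(x-y),\,-2\alpha(x-y)\big)$ and $H_\alpha(x,y)-H_\alpha(\bar x,\bar x)=\big(f(x)-f(\bar x)\big)+\alpha\norm{x-y}^2$, abbreviating $A:=f(x)-f(\bar x)$ and $B:=\alpha\norm{x-y}^2\ge0$, so the increment is $t:=A+B$. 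The heart of the argument is the pair of lower bounds $\norm{\nabla H_\alpha(x,y)}\ge\tfrac{1}{\sqrt2}\norm{\nabla f(x)}$, obtained from the elementary inequality $\norm{a}^2+\norm{c}^2\ge\tfrac12\norm{a+c}^2$ applied to the two components $a,c$ of $\nabla H_\alpha$ (whose sum is exactly $\nabla f(x)$), together with the trivial bound $\norm{\nabla H_\alpha(x,y)}\ge 2\alpha\norm{x-y}=2\sqrt\alpha\,\sqrt B$. These two estimates decouple the contribution of the smooth part $f$ from that of the quadratic coupling.

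Using the desingularizing function $\varphi$, neighborhood $U$, and threshold $\eta$ furnished by the PLK property of $f$ at $\bar x$, I would then define a candidate desingularizing function for $H_\alpha$ by $\psi(t):=2\sqrt2\,\varphi(t/2)+\sqrt{2/\alpha}\,\sqrt t$, equivalently $\psi'(t)=\sqrt2\,\varphi'(t/2)+\tfrac{1}{\sqrt{2\alpha}}\,t^{-1/2}$. Because $\varphi'$ is positive and nonincreasing and $t\mapsto t^{-1/2}$ is positive and nonincreasing, $\psi'$ is positive and nonincreasing, so $\psi$ is concave and $\mathcal{C}^1$ on $(0,\eta')$ with $\psi(0)=0$; hence $\psi$ meets all clauses of Definition~\ref{KL ine}. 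The verification of (iv) for $H_\alpha$ on a suitable neighborhood of $(\bar x,\bar x)$ with threshold $\eta'\le\eta$ then splits into two cases. If $B\ge A$, then $t\le 2B$ and the quadratic bound gives $\norm{\nabla H_\alpha}\ge\sqrt{2\alpha}\,\sqrt t$, so multiplying by $\psi'(t)\ge\tfrac{1}{\sqrt{2\alpha}}t^{-1/2}$ yields the required $\ge1$. If $A>B$, then $A>0$ and $t<2A$, so the PLK inequality for $f$ applies at $x\in U$ and gives $\norm{\nabla f(x)}\ge1/\varphi'(A)\ge1/\varphi'(t/2)$ by monotonicity of $\varphi'$; combining with $\norm{\nabla H_\alpha}\ge\tfrac{1}{\sqrt2}\norm{\nabla f(x)}$ and multiplying by $\psi'(t)\ge\sqrt2\,\varphi'(t/2)$ again yields $\ge1$. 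This proves (i).

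For (ii), I would specialize to $\varphi(t)=Mt^{1-q}$, for which $\psi'(t)$ is a positive combination of $t^{-q}$ and $t^{-1/2}$; as $t\to0^+$ the dominant power is $t^{-\max\{q,1/2\}}$, so $\psi$ is, up to constants, a desingularizing function of exponent $\max\{q,1/2\}$, showing the PLK exponent of $H_\alpha$ is at most $\max\{q,1/2\}$. For the matching lower bound I would test the PLK inequality on two slices: restricting to $y=x$ reduces it to the PLK inequality for $f$ and forces the exponent to be at least $q$, while restricting to $x=\bar x$ (using $\nabla f(\bar x)=0$ at the stationary point of interest) gives $d(0,\partial H_\alpha)=2\alpha\norm{x-y}$ against an increment $\alpha\norm{x-y}^2$ and forces the exponent to be at least $1/2$. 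Together these give the exact value $\max\{q,1/2\}$.

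I expect the main obstacle to be part (i) with a general concave $\varphi$, and specifically the region where $f(x)<f(\bar x)$ yet $H_\alpha(x,y)>H_\alpha(\bar x,\bar x)$, where the PLK inequality for $f$ is simply unavailable. The case split is designed precisely to quarantine this regime into the situation $B\ge A$, in which the quadratic term alone supplies a gradient lower bound; this is also the structural reason why $\psi$ must carry the extra $\sqrt t$ summand, which is exactly what injects the exponent $1/2$ into the conclusion of (ii). A secondary and more routine point is checking that the constructed $\psi$ satisfies every requirement of Definition~\ref{KL ine} and that the exactness claim in (ii) relies on stationarity of $\bar x$, as holds in every intended application of this proposition.
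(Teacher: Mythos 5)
Your argument is correct. Note, however, that the paper does not prove this proposition itself: it is imported wholesale from [WangWang2023, Theorem~3.3 and Corollary~3.5]. The natural in-paper comparison is therefore the proof of Proposition~\ref{isolated}, which your construction closely parallels while being logically independent of it. Both arguments build a desingularizing function for $H_\alpha$ by combining $\varphi'$ with the power $t^{-1/2}$ (the paper integrates $\max\set{\psi'(s),1/\sqrt{s}}$ and evaluates at $t/2$; you take the sum $\sqrt2\,\varphi'(t/2)+\tfrac{1}{\sqrt{2\alpha}}t^{-1/2}$, which differs only by a factor of $2$), both bound $\norm{\nabla H_\alpha}$ from below by contributions of $\norm{\nabla f(x)}$ and $\norm{x-y}$, and both close the argument by a case split on which of $f(x)-f(\bar x)$ and $\alpha\norm{x-y}^2$ dominates. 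The genuine differences are these. First, Proposition~\ref{isolated} assumes $\bar x$ is a local minimizer, which forces $f(x)\ge f(\bar x)$ near $\bar x$ and makes the increment $H_\alpha(z)-H_\alpha(\bar z)$ comparable to $S=\max\set{A,B}$; Proposition~\ref{KL Lya global} has no such hypothesis, and your quarantining of the regime $f(x)<f(\bar x)$ into the case $B\ge A$ (where the quadratic term alone supplies the gradient lower bound) is exactly what is needed to cover it --- this is the one place where your proof does something the paper's neighboring argument does not. Second, you use two decoupled gradient bounds, $\norm{\nabla H_\alpha}\ge\tfrac{1}{\sqrt2}\norm{\nabla f(x)}$ and $\norm{\nabla H_\alpha}\ge 2\alpha\norm{x-y}$, deployed one per case, whereas the paper derives a single combined bound $\norm{\nabla H_\alpha}\ge\sqrt{\varepsilon/2}\,(\norm{\nabla f(x)}+\sqrt{\alpha}\norm{x-y})$ in order to obtain constants uniform over $\alpha\ge\varepsilon$; your constants depend on $\alpha$, which is all the present statement claims. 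Two caveats you flag are real and worth keeping: the statement omits the (clearly intended) hypothesis that $f$ satisfies the PLK condition at $\bar x$, and the exactness half of (ii) --- the lower bound $1/2$ on the exponent via the slice $x=\bar x$ --- genuinely requires $\nabla f(\bar x)=0$, which is likewise not in the literal statement but holds in every application made of it. Only the upper bound in (ii) is actually used later in the paper, so this does not affect anything downstream.
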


Now we establish some uniformity in the selection of PLK desingularizing functions for a collection of Lyapunov functions $\set{H_\alpha}_{\alpha\ge \varepsilon}$ at $(\bar x,\bar x)$ provided that the objective function $f$ satisfies the PLK condition at the local minimizer $\bar x$. The result below is inspired by \cite[Proposition~3.7]{Josz2023}, where it is shown that this property holds under a global variant of the PLK condition and with $\varepsilon=1/4$.

\begin{Proposition}\label{isolated} Let $f:\R^n\rightarrow\overline\R$ be a ${\cal C}^1$-smooth function around its local minimizer $\bar x$. Assume that $f$ satisfies the PLK condition at $\bar x$ with the desingularizing function $\psi:[0,\eta)\rightarrow[0,\infty)$ for some $\eta\in (0,\infty]$. Then the following assertions hold whenever $\varepsilon\in (0,1/4]$:
   
{\bf(i)} The collection of functions $\set{H_\alpha}_{\alpha\ge \varepsilon}$ satisfies the same PLK condition at $\bar z:=(\bar x,\bar x)$ with the desingularizing function $\varphi:[0,\eta)\rightarrow[0,\infty)$ defined by $\varphi(0):=0$ and 
\begin{align}\label{construction varphi}
\varphi(t):=\sqrt{\frac{8}{\varepsilon}}\int_0^{t/2} \max\set{\psi'(s),\frac{1}{\sqrt{s}}}ds\;\text{ for all }\;t\in (0,\eta).
\end{align}
        
{\bf(ii)} If $\psi(t)=Mt^{1-q}$ for some $M>0$ and $q\in[0,1)$, then the function $\varphi$ in \eqref{construction varphi} is given by $\varphi(t)=\frac{2^{\theta+1/2}\tilde M}{\sqrt{\varepsilon}(1-\theta)}t^{1-\theta}$ when $t\in (0,\delta)$ for some $\delta\in(0,\eta)$, where $\theta:=\max\set{q,\frac{1}{2}}$ and $\tilde{M}>0$ is defined as 
\begin{align}\label{tilde M}
\tilde{M}:=\begin{cases}
1&\text{ if }\quad q\in \left[0,\frac{1}{2}\right),\\
\max \set{M(1-q),1}&\text{ if }\quad q=\frac{1}{2},\\
M(1-q)&\text{ if }\quad q\in \left(\frac{1}{2},1\right).
\end{cases}
\end{align}
\end{Proposition}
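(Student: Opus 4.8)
The plan is to exploit the fact that each $H_\alpha$ is $\mathcal{C}^1$-smooth (since $f$ is and the quadratic is), so that $\partial H_\alpha(z)=\{\nabla H_\alpha(z)\}$ and the PLK inequality \eqref{KL 2} for $H_\alpha$ reduces to a pointwise lower bound on $\norm{\nabla H_\alpha(z)}$. Writing $z=(x,y)$, I would first record the two identities $\nabla H_\alpha(x,y)=\big(\nabla f(x)+2\alpha(x-y),\,-2\alpha(x-y)\big)$ and $H_\alpha(z)-H_\alpha(\bar z)=\big(f(x)-f(\bar x)\big)+\alpha\norm{x-y}^2$, using $H_\alpha(\bar z)=f(\bar x)$. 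Setting $t:=H_\alpha(z)-H_\alpha(\bar z)$, $\delta:=f(x)-f(\bar x)$ and $a:=\norm{x-y}$, I extract two elementary lower bounds from the orthogonal splitting $\norm{\nabla H_\alpha(z)}^2=\norm{\nabla f(x)+2\alpha(x-y)}^2+4\alpha^2a^2$: the triangle inequality $\norm{\nabla f(x)}\le\norm{\nabla f(x)+2\alpha(x-y)}+2\alpha a$ together with $\sqrt{p^2+r^2}\ge(p+r)/\sqrt2$ gives $\norm{\nabla H_\alpha(z)}\ge\norm{\nabla f(x)}/\sqrt2$, while dropping the first summand gives $\norm{\nabla H_\alpha(z)}\ge 2\alpha a$.

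For part (i), I shrink the PLK neighborhood $U$ of $\bar x$ so that $f\ge f(\bar x)$ on it (using that $\bar x$ is a local minimizer, hence $\delta\ge 0$), take $U':=U\times U$, and keep the same $\eta$; crucially neither $\varphi$ nor $U'$ depends on $\alpha$, which is exactly the uniformity claimed. Differentiating \eqref{construction varphi} via the fundamental theorem of calculus yields $\varphi'(t)=\sqrt{2/\varepsilon}\,\max\{\psi'(t/2),\,\sqrt2/\sqrt t\}$, and since $\psi'$ and $s\mapsto s^{-1/2}$ are positive, continuous and non-increasing, $\varphi$ satisfies $\varphi(0)=0$, is $\mathcal{C}^1$ on $(0,\eta)$, is concave, and has $\varphi'>0$, so it is an admissible desingularizing function in the sense of Definition~\ref{KL ine}. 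The heart of the matter is a dichotomy matched to the maximum, applied only at $z\in U'$ with $t\in(0,\eta)$. If $\delta\ge\alpha a^2$ then $t\le 2\delta$, so $\delta\ge t/2>0$ and $\psi'(\delta)\le\psi'(t/2)$ by monotonicity; combining $\norm{\nabla H_\alpha(z)}\ge\norm{\nabla f(x)}/\sqrt2\ge 1/(\sqrt2\,\psi'(\delta))$ from the PLK property of $f$ with $\varphi'(t)\ge\sqrt{2/\varepsilon}\,\psi'(t/2)$ gives $\varphi'(t)\norm{\nabla H_\alpha(z)}\ge 1/\sqrt\varepsilon\ge 1$. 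If instead $\delta<\alpha a^2$ then $\alpha a^2>t/2$, whence $\norm{\nabla H_\alpha(z)}\ge 2\alpha a>\sqrt{2\alpha t}\ge\sqrt{2\varepsilon t}$; pairing this with $\varphi'(t)\ge\sqrt{2/\varepsilon}\cdot\sqrt2/\sqrt t=2/\sqrt{\varepsilon t}$ yields $\varphi'(t)\norm{\nabla H_\alpha(z)}\ge 2\sqrt2\ge 1$. Both estimates hold for every $\alpha\ge\varepsilon$, establishing (i).

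For part (ii), I substitute $\psi(t)=Mt^{1-q}$, so $\psi'(s)=M(1-q)s^{-q}$, and determine which term of $\max\{M(1-q)s^{-q},\,s^{-1/2}\}$ dominates near $0$ by comparing exponents. When $q\in[0,1/2)$ the term $s^{-1/2}$ dominates for $s$ below a threshold; when $q=1/2$ the integrand equals $\max\{M/2,1\}\,s^{-1/2}$ identically; and when $q\in(1/2,1)$ the term $M(1-q)s^{-q}$ dominates for $s$ below $s_0:=(M(1-q))^{1/(q-1/2)}$. Choosing $\delta\in(0,\eta)$ small enough (e.g. $\delta=\min\{\eta,2s_0\}$ in the last regime), the integrand is a single monomial $s^{-\theta}$ on $(0,t/2)$ for $t\in(0,\delta)$ with $\theta=\max\{q,1/2\}$, and evaluating $\int_0^{t/2}s^{-\theta}\,ds$ reproduces $\varphi(t)=\frac{2^{\theta+1/2}\tilde M}{\sqrt\varepsilon(1-\theta)}t^{1-\theta}$ with $\tilde M$ as in \eqref{tilde M}; I would carry out the three prefactor computations separately and check each matches.

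The main obstacle is the gradient dichotomy in part (i): the lower bound must be uniform in $\alpha$ and must simultaneously cover the regime where the objective-value increase $\delta$ controls $t$ (where only the PLK property of $f$ provides information) and the regime where the momentum displacement $a$ controls $t$ (where $f$ gives nothing and one must rely on the quadratic part $4\alpha^2a^2$ of $\norm{\nabla H_\alpha(z)}^2$). The construction \eqref{construction varphi} is engineered precisely so that $\varphi'$ dominates both $\psi'(t/2)$ and $\sqrt2/\sqrt t$ at once, which is what closes the two cases; identifying the correct split point $\delta=\alpha a^2$, invoking the concavity of $\psi$ for the monotonicity of $\psi'$, and propagating the normalizing constant $\sqrt{8/\varepsilon}$ through both estimates is the delicate bookkeeping.
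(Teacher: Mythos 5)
Your proposal is correct and follows essentially the same route as the paper: the same construction of $\varphi$ via the integrand $\max\{\psi'(s),1/\sqrt{s}\}$, the same two lower bounds on $\norm{\nabla H_\alpha(z)}$ extracted from its orthogonal splitting, and the same dichotomy on whether $f(x)-f(\bar x)$ or $\alpha\norm{x-y}^2$ dominates $H_\alpha(z)-H_\alpha(\bar z)$ (the paper phrases this via $S:=\max\set{\alpha\norm{x-y}^2,f(x)-f(\bar x)}$ and the containment $H_\alpha(z)-H_\alpha(\bar z)\in[S,2S]$, which is your case split in disguise). The only cosmetic difference is that the paper keeps the single combined estimate $\norm{\nabla H_\alpha(z)}\ge\sqrt{\varepsilon/2}\brac{\norm{\nabla f(x)}+\sqrt{\alpha}\norm{x-y}}$ rather than two separate bounds, and it additionally records the continuity of $\varphi$ at $0$; neither changes the substance.
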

\begin{proof}
To verify (i) pick any $\varepsilon \in (0,1/4]$ and use the PLK property of $f$ at its local minimizer $\bar x$ to find a neighborhood $U$ of $\bar x$ on which $f$ satisfies all the conditions in Definition~\ref{KL ine} with the corresponding desingularizing function $\psi: [0,\eta) \rightarrow [0,\infty)$. In particular, we have
\begin{align}\label{desingularizing}
\psi'\big(f(x)-f(\bar{x})\big)\norm{\nabla f(x)}\ge 1\;\mbox{  for all }\;x \in U\;\mbox{ with }\;f(\bar{x}) < f(x) < f(\bar{x}) + \eta.
\end{align}
The function $\varphi$ in \eqref{construction varphi} can be written as $\varphi(t)=\sqrt{\frac{8}{\varepsilon}}\phi\brac{\frac{t}{2}}$, where $\phi:[0,\eta)\rightarrow[0,\infty)$ is defined by 
\begin{align}\label{construction phi t}
\phi(t):=\int_0^t \max\set{\psi'(s),\frac{1}{\sqrt{s}}}ds\;\text{ for }\;t\in(0,\eta)\;\mbox{ with }\;\phi(0):=0.
\end{align}
It follows from the continuity of $\psi$ at $0$ that
\begin{align*}
0\le \phi(t)\le \int_0^t \psi'(s)ds+ \int_0^t\frac{1}{\sqrt{s}}ds=\psi(t)+2\sqrt{t}\;\text{ for }\;t\in(0,\eta).
\end{align*}
Passing the limit above as $t\rightarrow0^+$ and using the continuity of $\psi$ reveal that $\phi$ is also continuous at $0$. Moreover, we deduce from \eqref{construction phi t} that the derivative $\phi'(t)=\max \set{\psi'(t),\frac{1}{\sqrt{t}}}>0$ is positive, continuous, and nonincreasing on $(0,\eta)$, which ensures the concavity of $\phi$ on this interval. In addition, it follows that 
\begin{align}\label{phi'>1}
\phi'(f(x)-f(\bar x))\norm{\nabla f(x)}\ge \psi'(f(x)-f(\bar x))\norm{\nabla f(x)}\ge 1
\end{align}
for $x\in U$ with $f(\bar x)< f(x)<f(x)+\eta$. Since $\varphi(t)=\sqrt{\frac{8}{\varepsilon}}\phi\brac{\frac{t}{2}}$, the domain of $\varphi$ is $[0,2\eta).$ It is clear that $\varphi$ satisfies all the conditions as a desingularizing function on $[0,2\eta)$ with
\begin{align}\label{deri varphi}
\varphi'(t)=\sqrt{\frac{2}{\varepsilon}}\phi'\brac{\frac{t}{2}}>0 \text{ for all }t\in (0,2\eta).
\end{align}
Taking any $\alpha\ge \varepsilon$ and $z=(x,y)\in Z:= U\times U$ with 
\begin{align}\label{halpha}
H_\alpha(\bar z)<H_\alpha(z)<H_\alpha(\bar z)+\eta,
\end{align}
we aim at verifying the inequality
\begin{align}\label{KL R2n}
\varphi'(H_\alpha(z)-H_\alpha(\bar z))\norm{\nabla H_\alpha (z)}\ge 1.
\end{align}
To proceed, observe first the estimates
\begin{subequations}
\begin{align}
\norm{\nabla H_\alpha(x,y)}&=\sqrt{\norm{\nabla f(x)+2\alpha(x-y)}^2+\norm{2\alpha(x-y)}^2} \label{g_Hal.a}\\
&\ge \sqrt{\frac{1}{4}\norm{\nabla f(x)}^2+\alpha^2\norm{x-y}^2}\label{g_Hal.b}\\
&\ge \sqrt{\frac{1}{4}\norm{\nabla f(x)}^2+\alpha\varepsilon\norm{x-y}^2}\label{g_Hal.c}\\
&\ge \sqrt{\frac{\varepsilon}{2}}\brac{\norm{\nabla f(x)}+\sqrt{\alpha}\norm{x-y}},\label{g_Hal.d}
\end{align}
\end{subequations}
where \eqref{g_Hal.a} is comes from the construction of $H_\alpha$, \eqref{g_Hal.b} follows from the inequality $\norm{a+b}^2\ge \frac{1}{4}\norm{a}^2-\frac{3}{4}\norm{b}^2$ for $a,b\in\R^n$, \eqref{g_Hal.c} follows from $\alpha\ge \varepsilon$, and \eqref{g_Hal.d} follows from $a^2+b^2\ge \frac{1}{2}(a+b)^2$ for $a,b\in\R$ and $\varepsilon\le 1/4$.  Now let $S:=\max \set{\alpha\norm{x-y}^2,f(x)-f(\bar x)}$ and deduce from $f(x)\ge f(\bar x)$ as $x\in U$ that
\begin{align*}
H_\alpha(z)-H_\alpha(\bar z)=f(x)+\alpha\norm{x-y}^2-f(\bar x) \in [S,2S].
\end{align*}
Combining the latter with $0< H_\alpha(z)-H_\alpha(\bar z)<\eta$ by \eqref{halpha} gives us $S\in (0,\eta).$ Using the nonincreasing property of $\varphi'$ and the relation $\varphi'(t)=\sqrt{\frac{2}{\varepsilon}}\phi'\brac{\frac{t}{2}}$ from \eqref{deri varphi} yields
\begin{align*}
\varphi'(H_\alpha(z)-H_\alpha(\bar z))&\ge \varphi'\brac{2S}=\sqrt{\frac{2}{\varepsilon}}\phi'(S).
\end{align*}
Furthermore, it follows from  \eqref{deri varphi} and  \eqref{g_Hal.d} that
\begin{align*}
\varphi'(H_\alpha(z)-H_\alpha(\bar z))\norm{\nabla H_\alpha(x,y)}&\ge \sqrt{\frac{2}{\varepsilon}}\phi'(S)\sqrt{\frac{\varepsilon}{2}}\brac{\norm{\nabla f(x)}+\sqrt{\alpha}\norm{x-y}}\\
&=\phi'(S)\norm{\nabla f(x)}+\phi'(S)\sqrt{\alpha}\norm{x-y}.
\end{align*}
By $\phi'\big(f(x)-f(\bar{x})\big)\norm{\nabla f(x)}\ge 1$ coming from \eqref{phi'>1} and $\phi'(\alpha \norm{x-y}^2)\ge \frac{1}{\sqrt{\alpha}\norm{x-y}}$ due to $\phi'(t)\ge \frac{1}{\sqrt{t}}$ for $t\in (0,\eta),$ we get the estimate
\begin{align*}
\phi'(S)\norm{\nabla f(x)}+\phi'(S)\sqrt{\alpha}\norm{x-y}\ge 1,
\end{align*}
which therefore verifies assertion (i) of the proposition.

To prove assertion (ii)  is easier. Indeed, let $\psi(t)=Mt^{1-q}$ for $q\in[0,1)$, let $\theta:=\max\set{\frac{1}{2},q}\in (0,1)$, and take $\tilde M$ from \eqref{tilde M}. It readily follows that
\begin{align}\label{formu phi}
\max\set{\psi'(s),\frac{1}{\sqrt{s}}}=\tilde Ms^{-\theta}\text{ for all }s\in(0,\delta)
\end{align}
for some $\delta\in(0,\eta)$ sufficiently small.
Combining the latter with \eqref{construction phi t}, we arrive at 
\begin{align*}
 \varphi(t)=\sqrt{\frac{8}{\varepsilon}}\int_0^{t/2}\tilde{M}s^{-\theta}ds=\frac{2^{\theta+1/2}\tilde M}{\sqrt{\varepsilon}(1-\theta)}t^{1-\theta} \text{ for all }t\in (0,\delta),
\end{align*}
which thus verifies (ii) and completes the proof of the proposition.
\end{proof} 

We conclude this section with the following simple albeit useful result taken from \cite[Proposition~B.5]{KhanhLuongMordukhovichTran}.

\begin{Proposition}\label{convergence rate deduce}
Let $f:\Omega\rightarrow{\R}$ be a function defined on open and convex set of $\Omega\subset\R^n$, and let $\bar x \in\Omega$. Assume that $f$ has a Lipschitz continuous gradient on $\Omega$ and take a sequence $\set{x^k}\subset\Omega $ converging to $\bar x$. Suppose in addition that there exists $\alpha>0$ such that 
\begin{align}\label{main condition rate}
\alpha\norm{\nabla f(x^k)}^2\le f(x^k)-f(x^{k+1})\;\text{ for all large }\;k\in\N.
\end{align}
Then the linear convergence of $x^k\rightarrow\bar x$ ensures the linear convergences of $f(x^k)$ to $f(\bar x)$ and $\nabla f(x^k)$ to $0$ as $k\to\infty$. Moreover, the convergence rate $\norm{x^k-\bar x}=\mathcal{O}(m(k))$ with $m(k)\downarrow 0$ yields
$$
\abs{f(x^k)-f(\bar x)}=\mathcal{O}(m^2(k))\;\mbox{ and }\;\norm{\nabla f(x^k)}=\mathcal{O}(m(k))\;\mbox{ as }\;
k\to\infty. 
$$
\end{Proposition}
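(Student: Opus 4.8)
The plan is to first verify that $\ox$ is a stationary point of $f$ and then to use the descent Lemma~\ref{lemma descent} to transfer a decay rate on $\norm{x^k-\ox}$ onto the quantities $f(x^k)-f(\ox)$ and $\norm{\nabla f(x^k)}$. Since $f$ is $\C^1$-smooth on $\Omega$, it is continuous there, so $x^k\to\ox$ gives $f(x^k)\to f(\ox)$ and hence $f(x^k)-f(x^{k+1})\to0$. The hypothesis \eqref{main condition rate} then forces $\alpha\norm{\nabla f(x^k)}^2\le f(x^k)-f(x^{k+1})\to0$, so $\norm{\nabla f(x^k)}\to0$, and the continuity of $\nabla f$ yields $\nabla f(\ox)=0$. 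I would also record that, since the right-hand side of \eqref{main condition rate} is nonnegative for large $k$, the sequence $\{f(x^k)\}$ is eventually nonincreasing; combined with $f(x^k)\to f(\ox)$, this gives $f(x^k)\ge f(\ox)$ for all large $k$.

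The key estimate comes next. Convexity of $\Omega$ ensures that the segment $[\ox,x^k]$ lies in $\Omega$, so Lemma~\ref{lemma descent} applies with $x=\ox$, $y=x^k$; using $\nabla f(\ox)=0$, it reduces to
\[
\abs{f(x^k)-f(\ox)}\le\frac{L}{2}\norm{x^k-\ox}^2.
\]
This already delivers the function-value rate: $\norm{x^k-\ox}=\mathcal{O}(m(k))$ immediately yields $\abs{f(x^k)-f(\ox)}=\mathcal{O}(m^2(k))$.

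For the gradient I would chain the three bounds obtained so far. Applying \eqref{main condition rate}, then the monotonicity bound $f(x^{k+1})\ge f(\ox)$, and finally the descent estimate above gives, for all large $k$,
\[
\alpha\norm{\nabla f(x^k)}^2\le f(x^k)-f(x^{k+1})\le f(x^k)-f(\ox)\le\frac{L}{2}\norm{x^k-\ox}^2,
\]
so that $\norm{\nabla f(x^k)}\le\sqrt{L/(2\alpha)}\,\norm{x^k-\ox}=\mathcal{O}(m(k))$. The two linear-convergence assertions then follow as the special case $m(k)=r^k$ with $r\in(0,1)$: the bounds above give $\abs{f(x^k)-f(\ox)}=\mathcal{O}(r^{2k})$ and $\norm{\nabla f(x^k)}=\mathcal{O}(r^{k})$, so both quantities converge linearly (at rates $r^2$ and $r$, respectively).

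The main—and essentially only nonroutine—point is recognizing that $\nabla f(\ox)=0$ must be established before invoking Lemma~\ref{lemma descent}; otherwise a surviving first-order term $\langle\nabla f(\ox),x^k-\ox\rangle$ would be only $\mathcal{O}(m(k))$ and would spoil the quadratic bound on $\abs{f(x^k)-f(\ox)}$. A secondary technical care is that the one-step decrease $f(x^k)-f(x^{k+1})$ in \eqref{main condition rate} must be upgraded to the full gap $f(x^k)-f(\ox)$, which is precisely what the eventual monotonicity $f(x^{k+1})\ge f(\ox)$ provides.
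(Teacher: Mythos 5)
Your proof is correct. Note that the paper itself gives no proof of this proposition---it is imported verbatim from \cite[Proposition~B.5]{KhanhLuongMordukhovichTran}---so there is no in-paper argument to compare against; judged on its own, your argument is complete: you correctly identify that $\nabla f(\bar x)=0$ must be established first (via $f(x^k)-f(x^{k+1})\to 0$ forcing $\nabla f(x^k)\to 0$ and continuity of $\nabla f$), you correctly use convexity of $\Omega$ to apply Lemma~\ref{lemma descent} on the segment $[\bar x,x^k]$, and the eventual monotonicity of $\{f(x^k)\}$ legitimately upgrades the one-step decrease to the full gap $f(x^k)-f(\bar x)$. One small simplification worth noting: once you know $\nabla f(\bar x)=0$, the gradient rate follows in one line from Lipschitz continuity alone, $\norm{\nabla f(x^k)}=\norm{\nabla f(x^k)-\nabla f(\bar x)}\le L\norm{x^k-\bar x}=\mathcal{O}(m(k))$, without routing through \eqref{main condition rate} a second time; your chain of inequalities is valid but does more work than necessary, and the hypothesis \eqref{main condition rate} is really only needed to certify $\nabla f(\bar x)=0$ and $f(x^k)\ge f(\bar x)$.
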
\vspace*{-0.2in}

\section{Inexact Gradient Descent Method with Momentum}\label{sec: IGDm}\vspace*{-0.05in}

In this section, we aim to introduce the inexact gradient descent method with momentum (IGDm) and conduct its global and local convergence analysis. The section is split into two parts. In the first subsection, we design the basic algorithm of IGDm, reveal some of its major properties, and compare it with other algorithms involving momentum. The second subsection develops a detailed global and local analysis of IGDm with deriving constructive convergence rates under the PLK conditions.\vspace*{-0.1in}

\subsection{Algorithm Design and Some Properties}\label{subsect convergence nanalys}

Here is the basic algorithm of IGDm to solve problem \eqref{main problem} illustrated in Figure~1.\vspace*{0.05in}

\begin{longfbox}
\begin{Algorithm}[IGDm]\hlabel{IGDm}\quad
\begin{enumerate}[-]
\item \textbf{Optimization problem: } 
\begin{align*}
\eqref{main problem}\;\text{ with }\begin{cases}X=\R^n\;\;\text{ for global minima,}\\
X=\B(\bar x,r)\;\text{ for a local minimizer }\bar x\;\text{ with some }\;r>0.
\end{cases}
\end{align*}
\vspace{-0.5cm}
\item \textbf{Initialization: }Choose  $x^0=x^1\in\Omega,\;\set{\beta_k}\subset [0,\infty),\;\set{\gamma_k}\subset[0,\infty)$,  $\nu \in(0,1),$ $\tau>0$.
\item \textbf{Parameter conditions:} $\bar{\beta}:=\sup \beta_k<1,$ $\bar \delta:=\sup|\beta_k-\gamma_k|<\infty.$
\item \textbf{Iteration: } $(k\ge 1)$ Update:
\begin{align}
&x^k_{\rm in}:=x^k+\beta_k(x^k-x^{k-1}),\label{step xin}\\
&x^k_{\rm ex}:=x^k+\gamma_k(x^k-x^{k-1}),\label{step xex}\\
&x^{k+1}:=x^k_{\rm in}-\tau  g^k,\;\text{ where }\;\norm{g^k-\nabla f(x^k_\rex)}\le \nu \norm{g^k}.\label{iteration update}
\end{align}
\end{enumerate}
\end{Algorithm}
\end{longfbox}
\vspace{0.3cm}

The following remark discusses the momentum effect in the algorithm. 
\begin{Remark}\rm\label{rmk mometum} 
 \begin{figure}[H]
\centering
\includegraphics[scale=0.3]{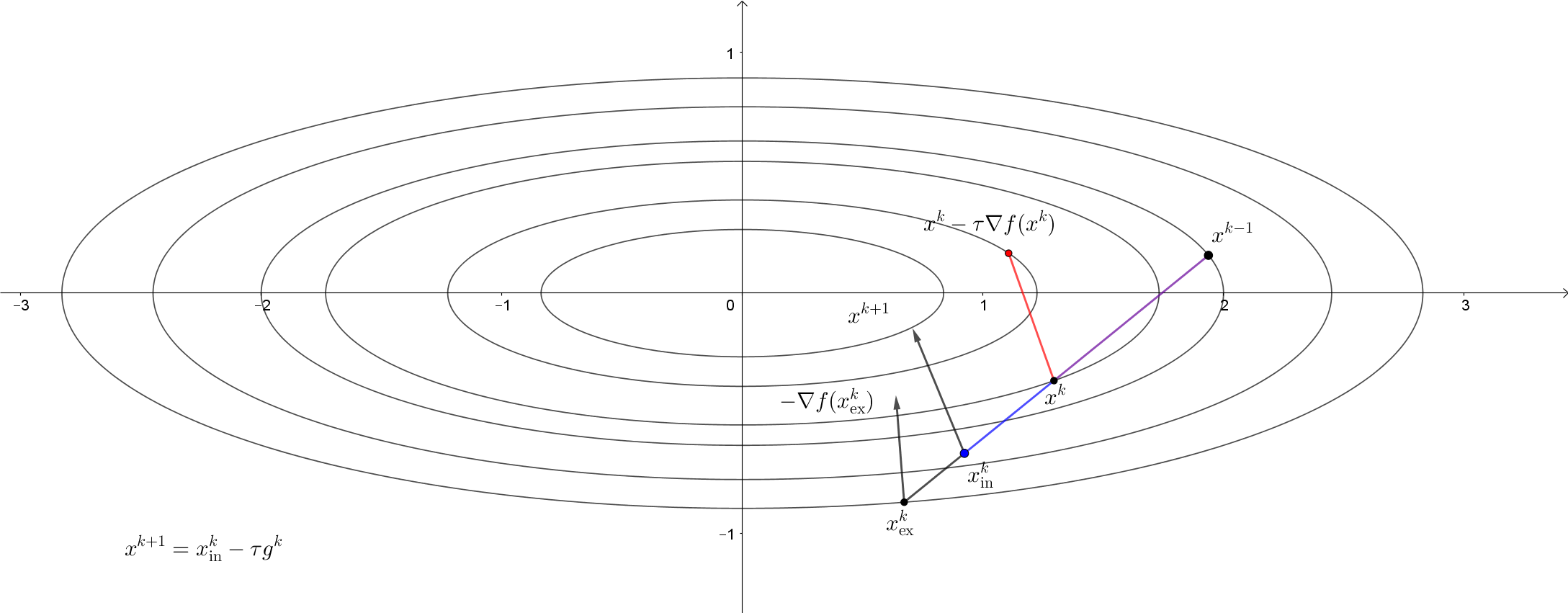}
\caption{Inexact Gradient Descent Algorithm with Momentum }
\label{fig:enter-label}
\end{figure}
The inertial update for $x^{k}_{\rm in}$ uses the momentum term as the starting point for the next iterative update. In contrast, the extrapolation update for $x^{k}_{\rm ex}$ incorporates momentum into the term for which the gradient is calculated. In the exact case, i.e., when $g^k=\nabla f(x^k_\rex)$ in Step~2,  Algorithm~\ref{IGDm} encompasses various gradient descent methods with momentum commonly used in practice. The differences between those methods are distinguished by the selections of $\set{\beta_k}$ and $\set{\gamma_k}$ as follows:
\begin{enumerate}[\bf (i)]

\item Polyak's heavy ball method \cite{Polyak1964,ZavrievKostyuk1993}: $\beta_k=\frac{4}{\brac{\sqrt{L}+\sqrt{\mu}}^2}$ and  $\gamma_k=0$ for all $k\in\N$, where  $\mu$ is a strong convexity modulus of $f$ and $L$ is a Lipschitz modulus of $\nabla f$. 

\item Nesterov's acceleration for strongly convex functions \cite[Equations~(2.2.8), (2.2.22)]{Nesterov2018}: $\beta_k=\gamma_k:=\frac{\sqrt{L}-\sqrt{\mu}}{\sqrt{L}+\sqrt{\mu}}\text{ for all }k\in\N$, where $\mu$ and $L$ are as in (i).

\item Nesterov's acceleration for convex functions (see \cite{Nesterov1983} and \cite[Eqn.~1.5]{ShiDuJordanSu2022}): $\beta_k=\gamma_k=\frac{k}{k+3}$, $k\in\N.$

\item FISTA for convex functions \cite{BeckTeboulle2009}: $\beta_k=\gamma_k=\frac{\theta_{k-1}-1}{\theta_k},$ where $\theta_0=\theta_1=1$, $\theta_{k+1}=\frac{1+\sqrt{1+4\theta^2_k}}{2}$ as $k\ge 1.$
\end{enumerate}
\end{Remark}

Define further the sequence $\{z^k\} \subset \mathbb{R}^{n}\times \mathbb{R}^n$ by $z^{k}:= (x^{k},x^{k-1})$ for all $k\in\N$ along which the fundamental properties of the Lyapunov function $H_\alpha$ are establishes below. These properties are crucial for deriving and unifying local and global convergence properties of Algorithm~\ref{IGDm}.

\begin{Proposition}\label{prop AIGD tech}
Let $\set{x^k}$ be a sequence generated by Algorithm~{\rm\ref{IGDm}} with 
\begin{align}\label{max Lt<1-nu}
\max\set{L\tau,2L\tau \bar \delta+(L\tau+1)\bar \beta^2}< 1-\nu
\end{align}
such that $x^k,x^k_{\rm in},x^k_{\rm ex}\in X$ for all $k=1,\ldots,K\in\N$. Consider the positive constants
\begin{align}\label{C1C2}
\alpha:&=\frac{(L \tau+1)\bar \beta^2+1-\nu }{4\tau},\nonumber\\
C_1:&=\frac{1-\nu-(L \tau+1)\bar \beta^2-2L\tau \bar \delta}{4\tau},\\
C_2:&=\sqrt{2}\max\set{\frac{\nu+1}{\tau },\frac{\bar\beta(L\tau+\nu+1)+L\tau\bar\delta}{\tau }}+4{\alpha}.\nonumber
\end{align}
Then we have the following estimates whenever $k=1,\ldots,K-1:$
\begin{align}
C_1\norm{z^{k+1}-z^k}^2&\le H_{\alpha}(z^k)-H_{\alpha}(z^{k+1}),\label{3.2(i)}\\
\norm{\nabla H_{\alpha}(z^{k})}&\le C_2\norm{z^{k+1}-z^k}\label{3.2(ii)}.
\end{align}
\end{Proposition}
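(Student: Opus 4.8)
Throughout, abbreviate $d^k:=x^k-x^{k-1}$, so $z^{k+1}-z^k=(d^{k+1},d^k)$ and $\norm{z^{k+1}-z^k}^2=\norm{d^{k+1}}^2+\norm{d^k}^2$. The algorithm yields $x^k_{\rm in}-x^k=\beta_k d^k$, $x^k_{\rm ex}-x^k=\gamma_k d^k$, $x^{k+1}-x^k_{\rm in}=-\tau g^k$, and hence $d^{k+1}=\beta_k d^k-\tau g^k$, i.e., $\tau g^k=\beta_k d^k-d^{k+1}$. The relative inexactness $\norm{g^k-\nabla f(x^k_{\rm ex})}\le\nu\norm{g^k}$ gives $\dotproduct{\nabla f(x^k_{\rm ex}),g^k}\ge(1-\nu)\norm{g^k}^2$ and $\norm{\nabla f(x^k_{\rm ex})}\le(1+\nu)\norm{g^k}$, while the Lipschitz continuity of $\nabla f$ on $X$ (all segments below lie in $X$ by its convexity and the standing hypothesis $x^k,x^k_{\rm in},x^k_{\rm ex}\in X$) gives $\norm{\nabla f(x^k_{\rm in})-\nabla f(x^k_{\rm ex})}\le L\bar\delta\norm{d^k}$ and $\norm{\nabla f(x^k)-\nabla f(x^k_{\rm in})}\le L\bar\beta\norm{d^k}$.

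I would settle the gradient bound \eqref{3.2(ii)} first, as it is the cleaner half. Since $\nabla H_\alpha(z^k)=\big(\nabla f(x^k)+2\alpha d^k,\,-2\alpha d^k\big)$, we get $\norm{\nabla H_\alpha(z^k)}\le\norm{\nabla f(x^k)}+4\alpha\norm{d^k}$. Telescoping through the reference points $\nabla f(x^k)\to\nabla f(x^k_{\rm in})\to\nabla f(x^k_{\rm ex})\to g^k$ and using the four facts above yields $\norm{\nabla f(x^k)}\le L(\bar\beta+\bar\delta)\norm{d^k}+(1+\nu)\norm{g^k}$; substituting $\norm{g^k}\le\tfrac1\tau(\bar\beta\norm{d^k}+\norm{d^{k+1}})$ from $\tau g^k=\beta_k d^k-d^{k+1}$ produces exactly
\begin{align*}
\norm{\nabla f(x^k)}\le\frac{\nu+1}{\tau}\norm{d^{k+1}}+\frac{\bar\beta(L\tau+\nu+1)+L\tau\bar\delta}{\tau}\norm{d^k}.
\end{align*}
Bounding both coefficients by their maximum, using $\norm{d^{k+1}}+\norm{d^k}\le\sqrt2\,\norm{z^{k+1}-z^k}$ and $\norm{d^k}\le\norm{z^{k+1}-z^k}$ for the residual $4\alpha\norm{d^k}$, assembles precisely the constant $C_2$ in \eqref{C1C2}.

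For the descent estimate \eqref{3.2(i)} the plan is to feed two applications of Lemma~\ref{lemma descent} into the Lyapunov difference. Applying \eqref{eq descent} on $[x^k_{\rm in},x^{k+1}]$ (step $-\tau g^k$) bounds $f(x^{k+1})$ from above, and applying it on $[x^k_{\rm in},x^k]$ (offset $-\beta_k d^k$) bounds $f(x^k)$ from below; subtracting cancels $f(x^k_{\rm in})$, and the two gradient pairings combine through $\tau g^k-\beta_k d^k=-d^{k+1}$ into
\begin{align*}
f(x^k)-f(x^{k+1})\ge-\dotproduct{\nabla f(x^k_{\rm in}),d^{k+1}}-\frac{L\bar\beta^2}{2}\norm{d^k}^2-\frac{L\tau^2}{2}\norm{g^k}^2.
\end{align*}
I would then add $\alpha\norm{d^k}^2-\alpha\norm{d^{k+1}}^2$, write $\nabla f(x^k_{\rm in})=g^k+e^k$ with $\norm{e^k}\le L\bar\delta\norm{d^k}+\nu\norm{g^k}$, and expand $\norm{d^{k+1}}^2=\beta_k^2\norm{d^k}^2-2\tau\beta_k\dotproduct{d^k,g^k}+\tau^2\norm{g^k}^2$. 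After this the whole right-hand side is a quadratic form in the scalars $\norm{d^k}$ and $\norm{g^k}$, still carrying the indefinite term $\dotproduct{d^k,g^k}$ and the error pairing $\dotproduct{e^k,d^{k+1}}$, and the goal reduces to showing it dominates $C_1\big(\norm{d^{k+1}}^2+\norm{d^k}^2\big)$.

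The bookkeeping of this quadratic-form comparison is the step I expect to be most delicate. The value $\alpha=\tfrac{(L\tau+1)\bar\beta^2+1-\nu}{4\tau}$ is calibrated so that the diagonal coefficients match those of $C_1\big(\norm{d^{k+1}}^2+\norm{d^k}^2\big)$: once the diagonal $\nu\tau\norm{g^k}^2$ part of the error is accounted for, the surplus in the $\norm{g^k}^2$-coefficient works out to $\tfrac\tau2\big[(1-\nu)-L\tau+L\tau\bar\delta\big]$, which is strictly positive precisely by the branch $L\tau<1-\nu$ of \eqref{max Lt<1-nu} (a branch that is genuinely needed, since it does not follow from the other when $\bar\beta,\bar\delta$ are small). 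This surplus is then spent, via Young's inequality together with the crude bound $\norm{d^{k+1}}\le\bar\beta\norm{d^k}+\tau\norm{g^k}$, to absorb both $\dotproduct{d^k,g^k}$ and $\dotproduct{e^k,d^{k+1}}$; since $C_1>0$ is exactly the second branch $2L\tau\bar\delta+(L\tau+1)\bar\beta^2<1-\nu$, the residual $\norm{d^k}^2$-coefficient stays nonnegative and \eqref{3.2(i)} follows. Keeping the per-iteration bounds $\beta_k\le\bar\beta$ and $|\beta_k-\gamma_k|\le\bar\delta$ consistent while splitting the cross terms with the correct Young weights is where the care is required.
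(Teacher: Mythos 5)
Your proof of the gradient bound \eqref{3.2(ii)} is correct and is the same as the paper's: the telescoping chain $\nabla f(x^k)\to\nabla f(x^k_{\rm in})\to\nabla f(x^k_{\rm ex})\to g^k$, the substitution $\tau\norm{g^k}\le\bar\beta\norm{d^k}+\norm{d^{k+1}}$, and the assembly of $C_2$ all match. The setup of the descent estimate \eqref{3.2(i)} --- two applications of Lemma~\ref{lemma descent} pivoted at $x^k_{\rm in}$, and the splitting of $\nabla f(x^k_{\rm in})-g^k$ into a Lipschitz part of size $L\bar\delta\norm{d^k}$ and an inexactness part of size $\nu\norm{g^k}$ --- also matches.

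The gap is in your final ``quadratic-form comparison.'' The constants are calibrated with \emph{zero slack}: in the paper's proof the coefficients of $\norm{d^{k}-d^{k-1}}^2$ and $\norm{d^{k+1}}^2$ in $H_\alpha(z^{k+1})-H_\alpha(z^k)$ both come out to exactly $-C_1$ when $\beta_k=\bar\beta$, and the only surplus in the whole estimate sits in the coefficient of $\norm{x^{k+1}-x^k_{\rm in}}^2=\tau^2\norm{g^k}^2$, worth $\frac{\tau}{2}\brac{1-\nu-L\tau}$. A surplus living purely in the $\norm{g^k}^2$ slot cannot absorb a cross term of the form $c\,\norm{d^k}\norm{g^k}$ without borrowing from the $\norm{d^k}^2$ coefficient, which has nothing left to give. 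Concretely: after expanding $\norm{d^{k+1}}^2$ and $\dotproduct{g^k,d^{k+1}}$ as you propose, the indefinite term $\dotproduct{d^k,g^k}$ survives with coefficient $\beta_k\brac{2\tau(\alpha+C_1)-1}=-\beta_k\brac{\nu+L\tau\bar\delta}$, and together with the error pairing it forces you to dominate $\brac{2\nu\bar\beta+L\tau\bar\delta(1+\bar\beta)}\norm{d^k}\norm{g^k}$ by the available diagonal $\frac{\nu\bar\beta^2+L\tau\bar\delta(1-\bar\beta)^2}{2\tau}\norm{d^k}^2+\frac{\tau}{2}\brac{1-\nu-L\tau+L\tau\bar\delta}\norm{g^k}^2$. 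Already for $\bar\delta=0$ the Young/AM--GM discriminant condition reads $1-\nu-L\tau\ge 4\nu$, which is far stronger than the assumed $L\tau<1-\nu$ and is outright false whenever $\nu\ge 1/5$; e.g.\ with $\nu=1/2$, $L\tau=2/5$, $\bar\beta=1/2$, $\bar\delta=0$ (all admissible under \eqref{max Lt<1-nu}) your lower bound becomes a genuinely indefinite quadratic form, so the argument as outlined cannot close with the stated $\alpha$ and $C_1$. The paper avoids ever creating this cross term: it evaluates $\dotproduct{g^k,x^{k+1}-x^k}=\frac{1}{\tau}\dotproduct{x^k_{\rm in}-x^{k+1},x^{k+1}-x^k}$ by the exact three-point identity as $\frac{1}{2\tau}\brac{\beta_k^2\norm{d^k}^2-\norm{x^{k+1}-x^k_{\rm in}}^2-\norm{d^{k+1}}^2}$, keeps $\norm{d^{k+1}}^2$ and $\norm{x^{k+1}-x^k_{\rm in}}^2$ as unexpanded coordinates throughout (also in the two error bounds, which are split as $\frac{L\bar\delta}{2}\brac{\norm{d^k}^2+\norm{d^{k+1}}^2}$ and $\frac{\nu}{2\tau}\brac{\norm{x^{k+1}-x^k_{\rm in}}^2+\norm{d^{k+1}}^2}$), and then the comparison with $-C_1\norm{z^{k+1}-z^k}^2$ is a literal coefficient-by-coefficient check with the $\norm{x^{k+1}-x^k_{\rm in}}^2$ coefficient $\frac{L\tau+\nu-1}{2\tau}\le 0$ simply dropped. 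You need this exact cancellation (or an equivalent one); a generic Young absorption of $\dotproduct{d^k,g^k}$ does not recover the proposition's constants.
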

\begin{proof}
By Lemma~\ref{lemma descent}, the Lipschitz continuity with constant $L>0$ of $\nabla f$ on $X $ yields
\begin{align*}
-\frac{L}{2}\norm{y-x}^2\le f(y)-f(x)-\dotproduct{\nabla f(x),y-x}\le \frac{L}{2}\norm{y-x}^2 \text{ for all }x,y\in X .
\end{align*}
For any $k=1,\ldots,K-1$, we use  the left-hand side inequality above with $y=x^k,x=x^k_{\rm in}$ and the right-hand side inequality with $y=x^{k+1},x=x^k_{\rm in}$ to get
\begin{align*}
 f(x^k)-f(x^k_{\rm in})-\dotproduct{\nabla f(x^k_{\rm in}),x^k-x^k_{\rm in}}&\ge -\frac{L }{2}\norm{x^k-x^k_{\rm in}}^2,
 \\
 f(x^{k+1})- f(x^k_{\rm in}) -\dotproduct{\nabla f(x^k_{\rm in}),x^{k+1}-x^k_{\rm in}}&\le \frac{L}{2}\norm{x^{k+1}-x^k_{\rm in}}^2.
\end{align*}
Subtracting the first inequality from the second one gives us 
\begin{align*}
f(x^{k+1})-f(x^k)-\dotproduct{\nabla f(x^k_{\rm in}),x^{k+1}-x^k}\le \frac{L}{2}\norm{x^{k+1}-x^k_{\rm in}}^2+\frac{L }{2}\norm{x^k-x^k_{\rm in}}^2,
\end{align*}
which can be rewritten in the form
\begin{align}\label{prop: 1 chain}
\begin{array}{ll}
\disp f(x^{k+1})-f(x^k)\le \frac{L}{2}\norm{x^{k+1}-x^k_{\rm in}}^2+\frac{L }{2}\norm{x^k-x^k_{\rm in}}^2+\dotproduct{\nabla f(x^k_{\rm in}),x^{k+1}-x^k}\\
=\disp\frac{L}{2} \norm{x^{k+1}-x^k_{\rm in}}^2+\frac{L \beta_k^2}{2}\norm{x^k-x^{k-1}}^2+\dotproduct{\nabla f(x^k_{\rm in})-\nabla f(x^k_{\rm ex}),x^{k+1}-x^k}\\
\disp+\dotproduct{\nabla f(x^k_{\rm ex})- g^k,x^{k+1}-x^k}+\dotproduct{g^k,x^{k+1}-x^k}.
\end{array}
\end{align}
By the Cauchy-Schwarz inequality, the Lipschitz continuity of $\nabla f$ on $X $, and the constructions of $x^k_{\rm in}$ and $x^k_{\rm ex}$ in \eqref{step xin} and \eqref{step xex}, we deduce the estimates
\begin{align}\label{prop: 1.1 chain}
\begin{array}{ll}
\dotproduct{\nabla f(x^k_{\rm in})-\nabla f(x^k_{\rm ex}),x^{k+1}-x^k}\le \norm{\nabla f(x^k_{\rm in})-\nabla f(x^k_{\rm ex})}\cdot\norm{x^{k+1}-x^k}\\
\le L\norm{x^k_{\rm in}-x^k_{\rm ex}}\cdot\norm{x^{k+1}-x^k}\le L\abs{\beta_k-\gamma_k}\norm{x^k-x^{k-1}}\cdot\norm{x^{k+1}-x^k}\\
\le\disp\frac{L\bar \delta}{2}\brac{\norm{x^k-x^{k-1}}^2+\norm{x^{k+1}-x^k}^2}.
\end{array}
\end{align}
It follows from the Cauchy-Schwarz and AM-GM inequalities together with the algorithm design $x^{k+1}:=x^k_{\rm in}-\tau  g^k$ and $\norm{g^k-\nabla f(x^k_{\rm ex})}\le \nu\norm{g^k}$ in \eqref{iteration update} that
\begin{align}\label{prop: 3 chain}
\begin{array}{ll}
\dotproduct{\nabla f(x^k_{\rm ex})- g^k,x^{k+1}-x^k}\le \norm{\nabla f(x^k_{\rm ex})- g^k}\cdot\norm{x^{k+1}-x^k}\\
\disp\le \nu \norm{g^k}\cdot\norm{x^{k+1}-x^k}=\frac{\nu}{\tau }\norm{x^k_{\rm in}-x^{k+1}}\cdot\norm{x^{k+1}-x^k}\\
\le\disp \frac{\nu}{2\tau }\brac{\norm{x^{k+1}-x^k_{\rm in}}^2+\norm{x^{k+1}-x^k}^2}.
\end{array}
\end{align}
Using further \eqref{step xin} and \eqref{iteration update},  we arrive at
\begin{align}\label{prop: 2 chain}
\begin{array}{ll}
\dotproduct{g^k,x^{k+1}-x^k}=\disp\frac{1}{\tau }\dotproduct{x^k_{\rm in}-x^{k+1},x^{k+1}-x^k}\\
=\disp\frac{1}{2\tau }\brac{\norm{x^k_{\rm in}-x^k}^2-\norm{x^{k+1}-x^k_{\rm in}}^2-\norm{x^k-x^{k+1}}^2}\\
=\disp\frac{1}{2\tau }\brac{\beta_k^2\norm{x^{k}-x^{k-1}}^2-\norm{x^{k+1}-x^k_{\rm in}}^2-\norm{x^k-x^{k+1}}^2}.
\end{array}
\end{align}
Combining \eqref{prop: 1 chain}--\eqref{prop: 2 chain} with taking into account $L\tau +\nu<1$ and $\beta_k\le \bar \beta$ gives us
\begin{align}\label{estimane conse}
\begin{array}{ll}
f(x^{k+1})-f(x^k)&\le\disp\frac{L\tau +\nu-1}{2\tau }\norm{x^{k+1}-x^k_{\rm in}}^2+\frac{L\tau\bar \delta+\nu-1}{2\tau }\norm{x^{k+1}-x^k}^2\\
&+\disp\frac{L \tau \beta_k^2+\beta_k^2+L\tau\bar \delta}{2\tau }\norm{x^k-x^{k-1}}^2\\
&\le\disp\frac{L\tau\bar \delta+\nu-1}{2\tau }\norm{x^{k+1}-x^k}^2+\frac{(L \tau +1)\bar \beta^2+L\tau\bar \delta}{2\tau }\norm{x^k-x^{k-1}}^2.
\end{array}
\end{align}
Since $\bar\beta^2<\frac{1-\nu}{L \tau+1}$, it follows that $\frac{(L \tau+1)\bar \beta^2+L\tau\bar \delta}{2\tau }<\frac{1-\nu-L\tau\bar \delta}{2\tau }$, and by $\alpha=\frac{(L \tau+1)\bar \beta^2+1-\nu}{4\tau}$ we get 
\begin{align*}
C_1=\frac{1-\nu-(L \tau+1)\bar \beta^2-2L\tau \bar \delta}{4\tau}=\frac{1-\nu-L\tau\bar\delta}{2\tau}-\alpha=\alpha-\frac{(L  \tau+1)\bar\beta^2+L\tau\bar \delta}{2\tau}
\end{align*}
for $C_1$ in \eqref{C1C2}. Using the latter together with \eqref{estimane conse} and the construction of $H_\alpha$ in \eqref{lyapunov func}  brings us to
\begin{align*}
H_{\alpha}(z^{k+1})-H_{\alpha}(z^k)&= f(x^{k+1})+\alpha\norm{x^{k+1}-x^k}^2-f(x^{k})-\alpha\norm{x^{k}-x^{k-1}}^2\\
&\le\brac{ \frac{\nu-1+L\tau\bar \delta}{2\tau }+\alpha}\norm{x^{k+1}-x^k}^2+\brac{\frac{(L \tau+1)\bar  \beta^2+L\tau\bar \delta}{2\tau }-\alpha}\norm{x^k-x^{k-1}}^2\\
&\le- C_1\brac{\norm{x^{k+1}-x^k}^2+\norm{x^k-x^{k-1}}^2}=-C_1\norm{z^{k+1}-z^k}^2,
\end{align*}
which therefore justifies the inequality in \eqref{3.2(i)}. 

To verify \eqref{3.2(ii)}, take any $k=1,\ldots,K-1$ and obtain the relationships
\begin{subequations}
\begin{align}
\norm{\nabla f(x^k)}&\le \norm{\nabla f(x^k)-\nabla f(x^k_{\rm in})} +\norm{\nabla f(x^k_{\rm in})-\nabla f(x^k_{\rm ex})}+\norm{\nabla f(x^k_{\rm ex})-g^k}+\norm{g^k}\label{nab.1 triang}\\
&\le(L\bar \beta+L\bar\delta)\norm{x^k-x^{k-1}}+(\nu+1)\norm{g^k}\label{nab.2}\\
&=(L\bar \beta+L\bar\delta)\norm{x^k-x^{k-1}}+\frac{\nu+1}{\tau }\norm{x^k+\beta_k(x^k-x^{k-1})-x^{k+1}}\label{nab.3}\\
&\le \brac{L\bar \beta+L\bar\delta+\frac{\nu+1}{\tau }\bar \beta}\norm{x^k-x^{k-1}}+\frac{\nu+1}{\tau }\norm{x^{k+1}-x^k}\nonumber\\
&\le \max\set{\frac{\nu+1}{\tau },L\bar \beta+L\bar\delta+\frac{\nu+1}{\tau }\bar \beta}\brac{\norm{x^{k+1}-x^k}+\norm{x^k-x^{k-1}}}\nonumber\\
&\le \sqrt{2}\max\set{\frac{\nu+1}{\tau },L\bar \beta+L\bar\delta+\frac{\nu+1}{\tau }\bar \beta}\norm{z^{k+1}-z^k}\label{nab.6},
\end{align}
\end{subequations}
where \eqref{nab.1 triang} follows from the triangle inequality; \eqref{nab.2} follows from $\norm{g^k-\nabla f(x^k_{\rm ex})}\le \nu\norm{g^k}$ in \eqref{iteration update}, the updates in \eqref{step xin}, \eqref{step xex}, and the Lipschitz continuity of $\nabla f$ on $X $ containing $ x^k_\rin,x^k_\rex$; \eqref{nab.3} follows from \eqref{step xin} and \eqref{step xex}; \eqref{nab.6} follows from $a+b\le \sqrt{2(a^2+b^2)}$ and $z^k=(x^k,x^{k-1})$. By using the estimates above, we deduce from the construction of $H_\alpha$ and inequality $\sqrt{a^2+b^2}\le a+b$ for $a,b\ge 0$ that
\begin{align*}
\norm{\nabla H_{\alpha}(z^k)}&= \sqrt{\norm{\nabla f(x^k)+2{\alpha}(x^k-x^{k-1})}^2+\norm{2{\alpha} (x^k-x^{k-1})}^2}\\
&\le \norm{\nabla f(x^k)+2{\alpha}(x^k-x^{k-1})}+\norm{2{\alpha} (x^k-x^{k-1})}\\
& \le \norm{\nabla f(x^k)}+4{\alpha}\norm{x^k-x^{k-1}}\\
&\le \brac{\sqrt{2}\max\set{\frac{\nu+1}{\tau },L\bar \beta+L\bar\delta+\frac{\nu+1}{\tau }\bar \beta}+4{\alpha}}\norm{z^{k+1}-z^k},
\end{align*}
which justifies \eqref{3.2(ii)}  and thus completes the proof of the proposition.
\end{proof}

The next proposition reveals the relationships between the subsequent iterations of Algorithm~\ref{IGDm} and the sequence of $z^k=(x^k,x^{k-1})$, $k\in\N$, defined above.

\begin{Proposition}\label{proposition distance}
Let $\set{x^k}$ be the sequence of iterates generated by Algorithm~{\rm\ref{IGDm}} such that $x^k,x^k_{\rm ex}\in \B(\bar x,\rho)\subset X$ for all $k=1,\ldots,K$. Then we have for all $k=1,\ldots,K$ that
\begin{align}
\norm{x^{k+1}-x^k}&\le   \frac{L\tau\rho }{(1-\bar \beta)(1-\nu)},\label{3.3(i)}\\
\norm{z^{k+1}-z^k}&\le  \frac{\sqrt{2} L\tau\rho }{(1-\bar \beta)(1-\nu)}.\label{3.3(ii)}
\end{align}
\end{Proposition}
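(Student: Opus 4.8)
The plan is to derive a one-step recursive bound for $d_k:=\norm{x^{k+1}-x^k}$ and then resolve it as a geometric series. First I would use the update rules \eqref{step xin} and \eqref{iteration update} to write $x^{k+1}-x^k=\beta_k(x^k-x^{k-1})-\tau g^k$, so that the triangle inequality together with $\beta_k\le\bar\beta$ gives $d_k\le\bar\beta\,d_{k-1}+\tau\norm{g^k}$. The task then reduces to producing a bound on $\tau\norm{g^k}$ that is uniform in the running index~$k$.

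The key step is to control $\norm{g^k}$ through the relative inexactness condition $\norm{g^k-\nabla f(x^k_{\rm ex})}\le\nu\norm{g^k}$ from \eqref{iteration update}. A reverse triangle inequality turns this into $(1-\nu)\norm{g^k}\le\norm{\nabla f(x^k_{\rm ex})}$, whence $\norm{g^k}\le\frac{1}{1-\nu}\norm{\nabla f(x^k_{\rm ex})}$. Here I would invoke the fact that the reference point $\bar x$ in the local version of Algorithm~\ref{IGDm} is a stationary point, so that $\nabla f(\bar x)=0$. Combining this with the Lipschitz continuity of $\nabla f$ on $X$ and the hypothesis $x^k_{\rm ex}\in\B(\bar x,\rho)$ yields $\norm{\nabla f(x^k_{\rm ex})}=\norm{\nabla f(x^k_{\rm ex})-\nabla f(\bar x)}\le L\norm{x^k_{\rm ex}-\bar x}\le L\rho$. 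Therefore $\tau\norm{g^k}\le\frac{L\tau\rho}{1-\nu}=:a$, which is independent of $k$.

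It remains to unroll the recursion $d_k\le\bar\beta\,d_{k-1}+a$. The initialization $x^0=x^1$ gives $d_0=0$, so iterating produces $d_k\le a\sum_{j=0}^{k-1}\bar\beta^{\,j}$, and since $\bar\beta<1$ by the parameter conditions of the algorithm, the geometric sum is dominated by $\frac{1}{1-\bar\beta}$. This delivers $d_k\le\frac{a}{1-\bar\beta}=\frac{L\tau\rho}{(1-\bar\beta)(1-\nu)}$ for every $k=1,\ldots,K$, which is \eqref{3.3(i)}. For \eqref{3.3(ii)} I would decompose $z^{k+1}-z^k=(x^{k+1}-x^k,\,x^k-x^{k-1})$ in the Euclidean norm on $\R^n\times\R^n$, so that $\norm{z^{k+1}-z^k}=\sqrt{d_k^2+d_{k-1}^2}$; bounding both $d_k$ and $d_{k-1}$ by the quantity just obtained yields the factor $\sqrt{2}$ and hence the claimed estimate.

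The computation is largely mechanical; the only genuinely delicate point is the step that replaces $\norm{\nabla f(x^k_{\rm ex})}$ by $L\rho$. This relies on recognizing that $\bar x$ is a stationary point, so that the gradient vanishes there, as the bound would otherwise pick up an extra $\norm{\nabla f(\bar x)}$ term and the stated constant would fail. Apart from this, one must keep the index bookkeeping consistent, checking that both the recursion and the $\norm{g^k}$-bound are valid exactly for $k=1,\ldots,K$, the range over which the ball-membership hypothesis $x^k,x^k_{\rm ex}\in\B(\bar x,\rho)$ is assumed.
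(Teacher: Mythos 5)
Your proposal is correct and follows essentially the same route as the paper's proof: the same one-step recursion $\norm{x^{k+1}-x^k}\le\bar\beta\norm{x^k-x^{k-1}}+\tau\norm{g^k}$, the same reverse-triangle bound $(1-\nu)\norm{g^k}\le\norm{\nabla f(x^k_{\rm ex})}$, the same use of $\nabla f(\bar x)=0$ with Lipschitz continuity to get $\norm{\nabla f(x^k_{\rm ex})}\le L\rho$, and the same geometric-series resolution starting from $x^0=x^1$. The only cosmetic difference is that you bound $\tau\norm{g^k}$ uniformly before unrolling the recursion, while the paper unrolls first and bounds each term afterward; you also correctly flag the reliance on stationarity of $\bar x$, which the paper uses implicitly.
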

\begin{proof}
Pick any $k=1,\ldots,K$ and observe that the iteration update in Algorithm~\ref{IGDm} can be rewritten as $x^{i+1}=x^i+\beta_k(x^i-x^{i-1})-\tau g^i$, which yields
\begin{align}\label{new update}
\norm{x^{i+1}-x^i}\le \bar \beta \norm{x^i-x^{i-1}}+\tau\norm{g^i}\;\text{ for all }\;i\le k.
\end{align}
Using the triangle inequality together with $\norm{g^i-\nabla f(x^i_{\rm ex})}\le \nu \norm{g^k}$ gives us
\begin{align*}
\norm{\nabla f(x^i_{\rm ex})}\ge \norm{g^i}-\norm{\nabla f(x^i_{\rm ex})-g^i}\ge (1-\nu)\norm{g^i}\;\text{ for all }\;i\le k.
\end{align*} 
Proceeding inductively from \eqref{new update} for $i=k,k-1,\ldots,1$ with the usage of the Lipschitz continuity of $\nabla f$ on the ball $\B(\bar x,\rho)$ containing $\bar x$ and $x^{k-i}_\rex,i=0,\ldots,k-1$ yields
\begin{align*}
\norm{x^{k+1}-x^k}&\le \bar \beta\norm{x^k-x^{k-1}}+\tau \norm{g^k}\\
&=\bar \beta^2\norm{x^{k-1}-x^{k-2}}+\tau\brac{\bar \beta \norm{g^{k-1}}+\norm{g^k}}\\
&\ldots\\
&\le\bar \beta^{k}\norm{x^1-x^0}+\tau \sum_{i=0}^{k-1}\bar \beta^i\norm{g^{k-i}}\\
&\le \frac{\tau}{1-\nu} \sum_{i=0}^{k-1}\bar \beta^i\norm{\nabla f(x_\rex^{k-i})} = \frac{\tau}{1-\nu}\sum_{i=0}^{k-1}\bar \beta^i\norm{\nabla f(x^{k-i}_\rex)-\nabla f(\bar x)}\\
&\le \frac{\tau}{1-\nu}\sum_{i=0}^{k-1}\bar \beta^i L\norm{x^{k-i}_\rex -\bar x}\le \frac{\tau}{1-\nu}\sum_{i=0}^{k-1}\bar \beta^i L\rho\\
&\le \frac{L\tau\rho}{1-\nu}\sum_{i=0}^{\infty}\bar \beta^i = \frac{L\tau\rho}{(1-\nu)(1-\bar \beta)},
\end{align*}
where the latter inequality follows from $\bar \beta\in [0,1)$. This readily verifies \eqref{3.3(i)}. Combining \eqref{3.3(i)}  with $\norm{z^{k+1}-z^k}=\sqrt{\norm{x^{k+1}-x^k}^2+\norm{x^{k}-x^{k-1}}^2}$ brings us to \eqref{3.3(ii)} and thus completes the proof.
\end{proof}

Finally, we show that the limit of iterates in Algorithm~\ref{IGDm} is a stationary point of problem \eqref{main problem}.

\begin{Proposition}\label{convergence xk+1 xk} Let $\set{x^k}$ be the sequence of iterates generated by Algorithm~{\rm\ref{IGDm}} that converges to some $\tilde x\in X$. Then $\tilde x$ is a stationary point of \eqref{main problem} and the sequences $\set{x^k_\rex},\set{x^k_\rin}$ converge to $\tilde x$ as well.
\end{Proposition}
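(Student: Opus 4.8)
The plan is to exploit the convergence $x^k \to \tilde x$ to drive all the auxiliary sequences and the inexact gradients to vanish in the limit, and then to invoke continuity of $\nabla f$. First I would observe that convergence of $\set{x^k}$ forces the increments to vanish, i.e. $\norm{x^k - x^{k-1}} \to 0$ as $k\to\infty$. Since $\beta_k \le \bar\beta$ by the parameter conditions and $\gamma_k \le \bar\beta + \bar\delta$ (the latter bound following from $\bar\delta = \sup|\beta_k - \gamma_k| < \infty$), the inertial and extrapolation steps \eqref{step xin} and \eqref{step xex} satisfy $\norm{x^k_\rin - x^k} = \beta_k\norm{x^k - x^{k-1}} \to 0$ and $\norm{x^k_\rex - x^k} = \gamma_k\norm{x^k - x^{k-1}} \to 0$. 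Combined with $x^k \to \tilde x$, this already yields $x^k_\rin \to \tilde x$ and $x^k_\rex \to \tilde x$, which establishes the second assertion of the proposition.

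Next I would recover the inexact gradient directly from the update rule. Rewriting the last line of \eqref{iteration update} as $g^k = \tau^{-1}(x^k_\rin - x^{k+1})$ and using that both $x^k_\rin \to \tilde x$ and $x^{k+1} \to \tilde x$, I obtain $g^k \to 0$ as $k\to\infty$. The relative inexactness condition $\norm{g^k - \nabla f(x^k_\rex)} \le \nu\norm{g^k}$ then gives, via the triangle inequality, the bound $\norm{\nabla f(x^k_\rex)} \le (1+\nu)\norm{g^k} \to 0$, so that $\nabla f(x^k_\rex) \to 0$.

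Finally, since $X$ is open and $\tilde x \in X$, for all large $k$ the points $x^k_\rex$ lie in $X$, where $\nabla f$ is (Lipschitz, hence) continuous; together with $x^k_\rex \to \tilde x$ this gives $\nabla f(x^k_\rex) \to \nabla f(\tilde x)$. Matching the two limits forces $\nabla f(\tilde x) = 0$, i.e., $\tilde x$ is a stationary point of \eqref{main problem}.

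As for the difficulty, I expect no serious obstacle here: the argument is essentially a limiting computation and does not require the descent estimates of Proposition~\ref{prop AIGD tech} or any PLK machinery. The only points demanding a little care are ensuring that the quantities $\nabla f(x^k_\rex)$ are well defined (which holds because $x^k_\rex \in X$ for all large $k$) and that continuity of $\nabla f$ may legitimately be applied at the limit $\tilde x$ (which holds since $X$ is open and $\tilde x \in X$).
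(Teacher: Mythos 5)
Your argument is correct and follows essentially the same route as the paper's proof: both deduce $\norm{x^{k}-x^{k-1}}\to 0$ from convergence, use the boundedness of $\set{\beta_k}$ and $\set{\gamma_k}$ to send $x^k_\rin,x^k_\rex\to\tilde x$, recover $g^k=\tau^{-1}(x^k_\rin-x^{k+1})\to 0$ from the update rule, and conclude $\nabla f(x^k_\rex)\to 0$ from the relative inexactness condition. Your explicit final appeal to the continuity of $\nabla f$ on the open set $X$ is a point the paper leaves implicit, but it is the same proof.
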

\begin{proof} Since the sequence $\set{x^k}$ is convergent,  we have $\norm{x^{k+1}-x^k}\rightarrow 0$ as $k\rightarrow\infty.$ Combining this with the updates in \eqref{step xin} and \eqref{step xex} together with the boundedness of $\set{\beta_k} and \set{\gamma_k}$ tells us that
\begin{align*}
{x^k_{\rm in}}&=x^k+\beta_k({x^{k}-x^{k-1}})\rightarrow\tilde x,\\
{x^k_{\rm ex}}&=x^k+\gamma_k({x^{k}-x^{k-1}})\rightarrow\tilde x.
\end{align*}
Using $\norm{x^{k+1}-x^k}\rightarrow 0$ and the updates in \eqref{step xin}, \eqref{iteration update} of Algorithm~\ref{IGDm} yields
\begin{align*}
g^k=\frac{1}{\tau}\brac{x^k-x^{k+1}+\beta_k(x^k-x^{k-1})}\rightarrow 0\;\text{ as }\;k\rightarrow\infty.
\end{align*}
It follows from $\norm{\nabla f(x^k_{\rm ex})-g^k}\le \nu\norm{g^k}$ in \eqref{iteration update} that ${\nabla f(x^k_{\rm ex})}\rightarrow 0$, and we are done with the proof.
\end{proof}\vspace*{-0.15in}

\subsection{Convergence Analysis}\label{subsec: convergence ana}

This subsection presents two major results about the global and local convergence of iterates in Algorithm~\ref{IGDm} to a stationary point of \eqref{main problem}  with establishing convergence rates under the PLK conditions. Since the algorithm obviously stops when a stationary point is reached, in our convergence analysis we impose the following natural requirement:

\begin{Assumption}\rm\label{assu}
$\nabla f(x^k_{\rm ex})\ne 0$ for all $k\in\N$.
\end{Assumption}

The theorem below establishes the {\em global convergence} of IGDm with providing its {\em convergence rates} under the exponential PLK conditions from Definition~\ref{KL ine}. Note that the lower exponent case $q\in(0,1/2)$ yielding the finite termination by Proposition~\ref{general rate}(i), does not have any effect in what follows due to the result of Proposition~\ref{KL Lya global}(ii) for the Lyapunov function used here in the study of methods with {\em momentum}. 

\begin{Theorem}\label{global IGDm}
Let $\set{x^k}$ be a sequence of iterates generated by Algorithm~{\em\ref{IGDm}} with $X=\R^n$, and let 
\begin{equation}\label{glob-assum}
\max\set{L\tau,2L\tau \bar \delta+(L\tau+1)\bar \beta^2}< 1-\nu.
\end{equation}
If $\inf_{k\in\N} f(x^k)>-\infty$, then we have the  assertions:
\begin{enumerate}[\bf(i)]
\item The sequence $\set{\nabla f(x^k)}$ converges to $0$ as $k\rightarrow\infty.$

\item If the basic PLK condition holds for $f$ at an accumulation point $\bar x$ of $\set{x^k}$, then $\set{x^k}$ converges to $\bar x$, which is a stationary point of $f$.

\item Assume that the exponential PLK condition holds for $f$ at $\bar x$ with the desingularizing function $\psi(t)=Mt^{1-q}$, where $M>0$ and $q\in[0,1)$. Then we have the convergence rates:

$\bullet$ For $q=[0,1/2]$, the sequences $\set{x^k}$, $\set{\nabla f(x^k)}$, and $\{f(x^k)\}$ converge linearly to the values $\bar x$, $0$, and $f(\bar x),$ respectively.
   
$\bullet$ For $q\in(1/2,1)$, the corresponding convergence rates are 
$$
\norm{x^k-\bar x}=\mathcal{O}\brac{k^{-\frac{1-q}{2q-1}}},\quad\norm{\nabla f(x^k)}=\mathcal{O}\brac{k^{-\frac{1-q}{2q-1}}},\quad
f(x^k)-f(\bar x)=\mathcal{O}\brac{k^{-\frac{2-2q}{2q-1}}}.
$$
\end{enumerate}
\end{Theorem}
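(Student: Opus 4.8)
The plan is to transport the entire analysis from the objective $f$ on $\R^n$ to the Lyapunov function $H_\alpha$ on $\R^n\times\R^n$ evaluated along the sequence $z^k=(x^k,x^{k-1})$, since $f$ itself need not decrease monotonically under the momentum updates whereas $H_\alpha$ does. First I note that the standing hypothesis \eqref{glob-assum} is exactly \eqref{max Lt<1-nu}, and that $X=\R^n$ automatically places $x^k,x^k_{\rm in},x^k_{\rm ex}\in X$ for every $k$; hence Proposition~\ref{prop AIGD tech} applies with $K=\infty$ and yields the two master estimates \eqref{3.2(i)} and \eqref{3.2(ii)} for all $k$, with the constants $\alpha,C_1,C_2$ of \eqref{C1C2}, where $C_1>0$ precisely because \eqref{glob-assum} forces $2L\tau\bar\delta+(L\tau+1)\bar\beta^2<1-\nu$. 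For assertion (i) I would telescope \eqref{3.2(i)}: the sequence $\{H_\alpha(z^k)\}$ is nonincreasing and bounded below, since $H_\alpha(z^k)\ge f(x^k)\ge\inf_k f(x^k)>-\infty$, so it converges and $\sum_k C_1\norm{z^{k+1}-z^k}^2<\infty$. Thus $\norm{z^{k+1}-z^k}\to0$, which in particular gives $\norm{x^k-x^{k-1}}\to0$, and then \eqref{3.2(ii)} forces $\norm{\nabla H_\alpha(z^k)}\to0$. Reading off the first block of $\nabla H_\alpha(z^k)$, namely $\nabla f(x^k)+2\alpha(x^k-x^{k-1})$, and using $\norm{x^k-x^{k-1}}\to0$ delivers $\nabla f(x^k)\to0$.

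For assertion (ii) I would first upgrade $\bar x$ to an accumulation point of $\{z^k\}$: along a subsequence $x^{k_j}\to\bar x$, and since $\norm{x^{k_j}-x^{k_j-1}}\to0$ also $x^{k_j-1}\to\bar x$, so $z^{k_j}\to\bar z:=(\bar x,\bar x)$. By Proposition~\ref{KL Lya global}(i) the basic PLK property of $f$ at $\bar x$ transfers to $H_\alpha$ at $\bar z$. Next, Remark~\ref{rmk general rate} shows that the estimates \eqref{3.2(i)}--\eqref{3.2(ii)}, which are the two conditions \eqref{two conditions} for the $\mathcal{C}^1$-smooth function $H_\alpha$ with $\alpha=C_1$ and $\beta=C_2$, supply (H1) and (H2) of Proposition~\ref{general convergence under KL} for the sequence $\{z^k\}$. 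Applying that proposition gives $z^k\to\bar z$, hence $x^k\to\bar x$, and Proposition~\ref{convergence xk+1 xk} then certifies that $\bar x$ is stationary.

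For assertion (iii) I again work with $\{z^k\}$ and $H_\alpha$, now invoking Proposition~\ref{general rate}: the conditions \eqref{two conditions} hold as above, exponential PLK of $f$ implies basic PLK so part (ii) already yields $z^k\to\bar z$, and by Proposition~\ref{KL Lya global}(ii) the exponential PLK exponent of $H_\alpha$ at $\bar z$ equals $\theta:=\max\{q,1/2\}$. Since $\theta\ge1/2$, the finite-termination branch of Proposition~\ref{general rate}(i) never occurs, which is the content of the remark preceding the theorem. Thus for $q\in[0,1/2]$ we have $\theta=1/2$ and $z^k\to\bar z$ linearly, while for $q\in(1/2,1)$ we have $\theta=q$ and $\norm{z^k-\bar z}=\mathcal{O}(k^{-\frac{1-q}{2q-1}})$. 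Because $\norm{x^k-\bar x}\le\norm{z^k-\bar z}$, the same rate holds for $\{x^k\}$. Finally I transfer these rates to $\nabla f$ and $f$: from $\nabla f(\bar x)=0$ and Lipschitz continuity, $\norm{\nabla f(x^k)}=\norm{\nabla f(x^k)-\nabla f(\bar x)}\le L\norm{x^k-\bar x}$, and from Lemma~\ref{lemma descent}, $\abs{f(x^k)-f(\bar x)}\le\frac{L}{2}\norm{x^k-\bar x}^2$; squaring the displacement rate produces exactly the stated $\mathcal{O}(k^{-\frac{2-2q}{2q-1}})$ decay of the function values and the asserted linear decay in the $q\in[0,1/2]$ case (alternatively one invokes Proposition~\ref{convergence rate deduce}).

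The conceptual crux, and the step I expect to be the main obstacle, is recognizing that both the monotone descent and the gradient-domination bound must be phrased for the Lyapunov function $H_\alpha$ on the product space rather than for $f$ directly, and then correctly matching its PLK exponent $\theta=\max\{q,1/2\}$ to the abstract rate result; once these identifications are fixed, the remaining work (telescoping the descent, the accumulation-point bookkeeping that produces $\bar z=(\bar x,\bar x)$, and squaring the displacement rate to bound the function values) is routine.
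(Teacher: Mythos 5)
Your proposal is correct and follows essentially the same route as the paper: both reduce everything to the Lyapunov function $H_\alpha$ along $z^k=(x^k,x^{k-1})$, invoke Proposition~\ref{prop AIGD tech} under \eqref{glob-assum} to obtain the two master estimates, telescope them for (i), and feed them into Propositions~\ref{general convergence under KL}, \ref{general rate}, and \ref{KL Lya global} for (ii)--(iii). The only cosmetic deviations are in the endgame: for (i) the paper passes through $g^k\to0$ and $\nabla f(x^k_{\rm ex})\to0$ rather than reading off the first block of $\nabla H_\alpha(z^k)$, and for (iii) it transfers the rates to $f(x^k)$ and $\nabla f(x^k)$ via Proposition~\ref{convergence rate deduce} applied to $H_\alpha$ rather than your direct Lipschitz/descent-lemma bounds at the stationary point $\bar x$; both variants are valid.
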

\begin{proof}
To verify (i), pick an accumulation point $\bar x$ of $\set{x^k}$ and observe by $X=\R^n$ and \eqref{glob-assum} that all the assumptions of Proposition~\ref{prop AIGD tech} are satisfied. Thus we get, whenever $k\in\N$, that
\begin{align}
&H_{\alpha}(z^k)-H_{\alpha}(z^{k+1})\ge C_1\norm{z^{k+1}-z^k}^2,\label{global ine 1}\\
&\norm{\nabla H_{\alpha}(z^{k})}\le C_2\norm{z^{k+1}-z^k},\label{global ine 2}
\end{align}
where $\alpha, C_1,C_2$ are defined in \eqref{C1C2}. It follows from \eqref{global ine 1} that the sequence $\set{H_\alpha(z^k)}$ is nonincreasing. Combining this with $\inf_{k\in\N} f(x^k)>-\infty$ and the inequality $H_\alpha(x,y)\ge f(x)$ for all $x,y\in\R^n$, we deduce that $\set{H_\alpha(z^k)}$ converges to some finite value. As a consequence, \eqref{global ine 1} tells us that
\begin{align*}
 C_1\sum_{k=1}^\infty \norm{z^{k+1}-z^k}^2\le \sum_{k=1}^\infty (H_\alpha(z^k)-H_\alpha(z^{k+1}))<\infty,
\end{align*}
which yields $\norm{z^{k+1}-z^k}\rightarrow 0$ and hence $\norm{x^{k+1}-x^k}\rightarrow 0$ as $k\rightarrow\infty.$ It follows from $x^{k+1}=x^k+\beta_k(x^k-x^{k-1})-\tau g^k$ and $0\le  \beta_k\le \bar \beta<\infty$ in Algorithm~\ref{IGDm} that
\begin{align*}
g^k=\frac{1}{\tau}\brac{x^k-x^{k+1}+\beta_k(x^k-x^{k-1})}\rightarrow 0\text{ as }k\rightarrow\infty.
\end{align*}
Combining $\norm{\nabla f(x^k_{\rm ex})-g^k}\le \nu\norm{g^k}$ for all $k\in\N$ with $g^k\rightarrow0$ gives us ${\nabla f(x^k_{\rm ex})}\rightarrow0$ as $k\rightarrow\infty.$ Since $\sup_{k\in\N}{|\beta_k-\gamma_k|}<\infty$ for $\beta_k\in [0,\bar \beta],$ we deduce that $\sup_{k\in\N}{\gamma_k} <\infty.$ Together with update \eqref{step xex} in Algorithm~\ref{IGDm}, the latter tells us that 
\begin{align*}
\norm{\nabla f(x^k_\rex)-\nabla f(x^k)}\le L \norm{x^k_{\rm ex}-x^k}=L\gamma_k\norm{x^{k}-x^{k-1}}\rightarrow0,
\end{align*}
which implies in turn that $\nabla f(x^k)\rightarrow0$ as $k\rightarrow\infty$ and this justifies (i) .

To verify assertions (ii) and (iii) simultaneously for the accumulation point $\bar x$ of $\set{x^k}$, we first observe that the stationarity of $\bar x$ for $f$ follows from Proposition~\ref{convergence xk+1 xk}. Further, it is easy to see that \eqref{global ine 1} and \eqref{global ine 2} correspond to the conditions in \eqref{two conditions} of Proposition~\ref{general rate} for the sequence $\set{z^k}\subset\R^{2n}$ and for the smooth function $H_\alpha(x,y)$. Observe also that $\nabla f(x^k_\rex)\ne 0$ and $\norm{g^k-\nabla f(x^k_\rex)}\le \nu \norm{g^k}$ yield $z^{k+1}\ne z^k$ for all $k\in\N$. Indeed, we deduce otherwise from the construction $z^k=(x^{k},x^{k-1})$ that $x^{k+1}=x^k=x^{k-1}$ for some $k\in\N$, which implies by \eqref{step xin} and \eqref{step xex} that $x^k_{\rm ex}=x^k_{\rm in}=x^k=x^{k+1}$. Then the update in \eqref{iteration update} tells us that $g^k=0$, and hence $\nabla f(x^k_{\rm ex})=0$ by $\norm{g^k-\nabla f(x^k_\rex)}\le \nu \norm{g^k}$, a contradiction.

By Proposition~\ref{general convergence under KL}, $\set{z^k}$ converges to $(\bar x,\bar x),$ which readily implies that $\set{x^k}$ converges to $\bar x$. Furthermore, it follows from Proposition~\ref{KL Lya global}(ii) that the imposed PLK property of $f$ with $\psi(t)=Mt^{1-q},\;M>0$, and $q\in [0,1)$ ensures that $H_\alpha$ satisfies the PLK property at $(\bar x,\bar x)$ with the desingularizing function $\varphi(t)=\overline Mt^{1-\theta}$, where $\theta=\max \set{q,\frac{1}{2}}\in [1/2,1),\overline{M}>0$. Thus we deduce from Proposition~\ref{general rate} that:\\ 
$\bullet$ For $q\in [0,1/2],$ we get $\theta=1/2$, which implies that $\set{z^k}$ converges linearly to $(\bar x,\bar x)$. This yields the linear convergence of the original sequence $\set{x^k}$ to $\bar x$.\\
$\bullet$  For $q\in (1/2,1)$, we get $\theta=q\in (1/2,1)$, which yields $\norm{z^k-(\bar x,\bar x)}=\mathcal{O}(k^{-\frac{1-q}{2q-1}})$ and therefore ensures that $\norm{x^k-\bar x}=\mathcal{O}(k^{-\frac{1-q}{2q-1}})$ as $k\rightarrow\infty.$

Now we use Proposition~\ref{convergence rate deduce} to verify convergence rates of $\set{\nabla f(x^k)}$ and $\set{f(x^k)}$. Since the gradient of $f$ is Lipschitz continuous on $\R^n$, the same property holds for $H_\alpha$ on $\R^n\times \R^n$, and hence condition \eqref{main condition rate} follows from \eqref{global ine 1} and \eqref{global ine 2}. This gives us the following:\\
$\bullet$ For $q\in[0,1/2],$ we get $\theta=1/2$ ensuring that $\{H_\alpha(z^k)\}$ converges linearly to $H_\alpha(\bar z)$ and that $\{\norm{\nabla H_\alpha(z^k)}\}$ converges linearly to $0$ as $k\to\infty$.\\
$\bullet$  For $q\in (1/2,1)$, we have that $H_\alpha(z^k)-H_\alpha(\tilde z)=\mathcal{O}(k^{-\frac{2-2q}{2q-1}})$ and $\norm{\nabla H_\alpha(z^k)}=\mathcal{O}(k^{-\frac{1-q}{2q-1}})$ as $k\to\infty$.

Finally, the convergence rates of the sequences $\{\norm{\nabla f(x^k)}\}$ and $\set{f(x^k)}$ are deduced from that for $\set{\norm{\nabla H_\alpha (z^k)}},\set{H_\alpha (z^k)}$ and the estimates
\begin{align*}
\norm{\nabla H_\alpha(x,y)}&=\sqrt{\norm{\nabla f(x)+2\alpha (x-y)}^2+4\alpha^2\norm{x-y}^2}\ge \norm{\nabla f(x)},\\ H_\alpha(x,y)&=f(x)+\alpha\norm{x-y}^2\ge f(x)\;\text{ for all }\;x,y\in\R^n.
\end{align*}
Therefore, we complete the proof of the theorem.
\end{proof}

The next theorem establishes {\em local convergence} properties of Algorithm~\ref{IGDm}. As follows from \cite[Theorem~7]{bento25}, the case of $q\in(0,1/2)$ in the exponential PLK condition for functions $f$ at their local minimizers is {\em inconsistent} with the local Lipschitz continuity of gradients, and hence should be excluded. Having this in mind leads us to the following local convergence analysis. Observe that, in contrast to the global convergence results in Theorem~\ref{global IGDm}, we now address {\em local minimizers} of $f$, not merely stationary points, and the presented proof is essentially more involved in comparison with the case of global convergence. 

\begin{Theorem}\label{local con IGDm}
Let the objective function $f:X\rightarrow\R$ in \eqref{main problem} satisfy the basic PLK condition at its local minimizer $\bar x$, and let $\set{x^k}$ be the sequence of iterates generated by Algorithm~{\rm\ref{IGDm}} with $X=\B(\bar x,r)$ for some $r>0$ and $\bar \beta\in [0,\sqrt{1-\nu})$. Then there exist $\xi \in(0,r]$ and $T>0$ such that for any initial point $x^0\in \mathbb{B}(\bar x, \xi)$ and $\tau\in (0, T]$, we have the convergence properties$:$
\begin{enumerate}
\item[\bf(i)] The sequences $\set{x^k},\set{x^k_{\rm in}},\set{x^k_{\rm ex}}$ stay in $\mathbb{B}(\bar x,r)$ and converge to some local minimizer $\tilde x\in\B(\bar x,r)$ of $f$ with $f(\tilde x)=f(\bar x)$.

\item[\bf(ii)] If the exponential PLK condition is satisfied with the desingularizing function $\psi(t)= Mt^{1-q}$ for some $M>0$ and $q\in(0,1)$, then the following convergence rates are guaranteed as $k\to\infty:$
\end{enumerate}\vspace*{-0.1in}

 $\bullet$ For $q=1/2$, the sequences $\set{x^k},\set{\nabla f(x^k)},\set{f(x^k)}$ converge linearly to $\tilde x,0,f(\tilde x),$ respectively.

 $\bullet$ For $q\in(1/2,1)$, we have the asymptotic estimates
 $$
 \norm{x^k-\tilde x }=\mathcal{O}\brac{k^{-\frac{1-q}{2q-1}}},\quad\norm{\nabla f(x^k)}=\mathcal{O}\brac{k^{-\frac{1-q}{2q-1}}},\quad f(x^k)-f(\tilde x)=\mathcal{O}\brac{k^{-\frac{2-2q}{2q-1}}}.
 $$
 \end{Theorem}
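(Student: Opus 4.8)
The plan is to reduce the local convergence statement to the global machinery already developed, with the key difficulty being that the global theorem requires the iterates and their momentum-perturbed variants to remain inside the ball $\B(\bar x,r)$ where the Lipschitz and PLK properties hold. So the heart of the argument is an \emph{invariance} (trapping) argument: I must choose $\xi$ and $T$ small enough that starting in $\B(\bar x,\xi)$ with stepsize $\tau\le T$ forces $\set{x^k},\set{x^k_\rin},\set{x^k_\rex}$ to stay in $\B(\bar x,r)$ for all $k$, after which Proposition~\ref{prop AIGD tech} applies unchanged and the PLK-based convergence rates follow as in the global case.

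First I would set up the Lyapunov descent as a one-step invariance statement. Using Lemma~\ref{isolation lemma}, I shrink $r$ if necessary so that $f(\bar x)$ is the unique critical value of $f$ in $\B(\bar x,r)$ and so that the basic PLK property holds throughout. The idea is an inductive claim: if $z^1,\dots,z^k$ lie in a suitable sublevel-type neighborhood $Z\subset U\times U$, then the descent estimate \eqref{3.2(i)} keeps $H_\alpha(z^{k+1})\le H_\alpha(z^k)\le\cdots\le H_\alpha(z^1)$, and I combine this with the per-step displacement bound \eqref{3.3(i)}--\eqref{3.3(ii)} from Proposition~\ref{proposition distance}, which shows $\norm{x^{k+1}-x^k}\le \frac{L\tau\rho}{(1-\bar\beta)(1-\nu)}$ whenever the current iterates sit in $\B(\bar x,\rho)$. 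The crucial point is that this displacement bound is \emph{linear in $\tau$}, so choosing $T$ small makes each step short; together with the monotonicity of $H_\alpha$ (which controls $f(x^k)-f(\bar x)$ and, through the uniform desingularizing function from Proposition~\ref{isolated}, the summed path length) this yields a quantitative radius estimate that I can force below $r$ by taking $\xi$ and $T$ small. The uniformity in $\alpha$ provided by Proposition~\ref{isolated} for the family $\set{H_\alpha}_{\alpha\ge\varepsilon}$ is what lets me use a single desingularizing function $\varphi$ even though $\alpha$ from \eqref{C1C2} depends on the (fixed but $\tau$-dependent) parameters; I would verify that the hypothesis $\bar\beta\in[0,\sqrt{1-\nu})$ together with $\tau\le T$ keeps $\alpha\ge\varepsilon$ for some fixed $\varepsilon\in(0,1/4]$ and keeps \eqref{glob-assum} valid.

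Once invariance is secured, assertion (i) follows by the same reasoning as Theorem~\ref{global IGDm}: estimates \eqref{3.2(i)}--\eqref{3.2(ii)} hold for all $k$, so $\set{H_\alpha(z^k)}$ is nonincreasing and bounded below (by $f(\bar x)$, since $\bar x$ is a local minimizer and all iterates stay in $\B(\bar x,r)$), giving $\norm{z^{k+1}-z^k}\to0$ and $\nabla f(x^k_\rex)\to0$. The trapped sequence $\set{z^k}$ lies in a compact set and hence has an accumulation point $\tilde z=(\tilde x,\tilde x)$; by Proposition~\ref{KL Lya global}(i), $H_\alpha$ satisfies the PLK property at $\tilde z$, and the conditions (H1)--(H2) of Proposition~\ref{general convergence under KL} hold via Remark~\ref{rmk general rate} applied to \eqref{3.2(i)}--\eqref{3.2(ii)}, so $z^k\to\tilde z$ and thus $x^k,x^k_\rin,x^k_\rex\to\tilde x$; Proposition~\ref{convergence xk+1 xk} then makes $\tilde x$ stationary, and uniqueness of the critical value in $\B(\bar x,r)$ gives $f(\tilde x)=f(\bar x)$ with $\tilde x$ a local minimizer.

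For assertion (ii) I would invoke Proposition~\ref{KL Lya global}(ii) and Proposition~\ref{isolated}(ii) to convert the exponential PLK exponent $q$ of $f$ into the exponent $\theta=\max\set{q,1/2}$ for $H_\alpha$, then apply Proposition~\ref{general rate} to $\set{z^k}$ exactly as in the global theorem: $q=1/2$ gives $\theta=1/2$ and linear convergence of $\set{z^k}$, hence of $\set{x^k}$, while $q\in(1/2,1)$ gives $\theta=q$ and the rate $\norm{z^k-\tilde z}=\mathcal{O}(k^{-\frac{1-q}{2q-1}})$. The rates for $\set{\nabla f(x^k)}$ and $\set{f(x^k)}$ follow from Proposition~\ref{convergence rate deduce} together with the standing estimates $\norm{\nabla H_\alpha(x,y)}\ge\norm{\nabla f(x)}$ and $H_\alpha(x,y)\ge f(x)$, as in the final display of the proof of Theorem~\ref{global IGDm}. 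I expect the main obstacle to be the invariance argument of the second paragraph: making the radius bound explicit enough to choose $\xi$ and $T$ requires carefully coupling the displacement bound (linear in $\tau$) with the PLK-controlled path length, and ensuring the neighborhood $Z$ on which Proposition~\ref{isolated} holds is not exited during the induction — this is precisely why the theorem's proof is "essentially more involved" than the global one.
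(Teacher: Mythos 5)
Your proposal is correct and follows essentially the same route as the paper's proof: an inductive trapping argument that couples the $\tau$-linear displacement bound of Proposition~\ref{proposition distance} with the PLK-controlled path length obtained from the uniform desingularizing function of Proposition~\ref{isolated} (checking $\alpha\ge\varepsilon$ along the way), followed by the same reduction to Propositions~\ref{general rate} and \ref{convergence rate deduce} for the rates. The only cosmetic difference is that the paper concludes convergence of $\set{z^k}$ directly from the summability of $\norm{z^{k+1}-z^k}$ rather than via Proposition~\ref{general convergence under KL}, but your invariance argument already yields that summability, so both routes coincide in substance.
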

\begin{proof} To verify (i), deduce from $\bar \beta \in[0,\sqrt{1-\nu})$ that there exists $T>0$ such that
\begin{align}\label{LT<1}
T< \frac{(1-\bar \beta)(1-\nu)}{4L\sqrt{2}(1+\bar \delta)},\quad(LT+1)\bar\beta^2+2LT\bar \delta<1-\nu,\quad LT\le 1-\nu.
\end{align}
Define further $\varepsilon:=\min\set{\frac{1-\nu}{4T},\frac{1}{4}}\in \big(0,\frac{1}{4}\big]$ and deduce from Lemma~\ref{isolation lemma} due to the imposed assumptions that there exists a neighborhood $U\subset\B(\bar x,r)$ of $\bar x$ such that $\bar x$ minimizes $f$ on $U$ and that $f(\bar x)$ is the only critical value within this set. Remembering that $f$ satisfies the basic PLK condition at $\bar x$ tells us by Proposition~\ref{isolated} that the Lyapunov function $\set{H_\alpha}_{\alpha\ge \varepsilon}$ enjoys the same PLK property at $\bar z:=(\bar x,\bar x)$ for some $\eta\in (0,\infty]$, some neighborhood $Z$ of $\bar z:=(\bar x,\bar x)$, and a concave desingularizing function $\varphi:[0,\eta)\rightarrow[0,\infty)$ with $\varphi(0)=0$ which is ${\cal C}^1$-smooth on $(0,\eta)$. This means that for any $\alpha\ge \varepsilon$ and any $z\in Z$ with $H_\alpha(\bar z)< H_\alpha( z)<H_\alpha(\bar z)+\eta$, we get the inequality
\begin{align}\label{KL R2n in proof}
\varphi'\big(H_\alpha(z )-H_\alpha(\bar z )\big)\norm{\nabla H_\alpha(z)}\ge 1.
\end{align}
The continuity of $f$ allows us to find $\rho\in(0,r)$ such that 
\begin{align}\label{choice rho}
f(x)+\frac{L^2T\rho^2}{2(1-\bar \beta)^2(1-\nu)^2}< f(\bar x)+\eta\text{ for all }x \in \mathbb{B}(\bar x,\rho).
\end{align}
Invoking further the continuity of $\varphi$ at $0$ with $\varphi(0)=0$ gives us $\xi\in (0,\rho]$ for which
\begin{align}\label{defi xi 1}
\sqrt{2}\norm{x-\bar x}+\frac{4\sqrt{2}\max\set{\nu+1,\bar\beta(\nu+2)+\bar\delta}+8}{1-\nu-(L  T+1)\bar\beta^2-2LT \bar \delta}\varphi(f(x)-f(\bar x))&<\frac{\rho}{4}\;\text{ for all }\;x\in\mathbb{B}(\bar x,\xi).
\end{align}
Given the sequence of iterates $\set{x^k}$ generated by Algorithm~\ref{IGDm} with $x^0\in \mathbb{B}(\bar x, \xi)$ and $\tau\in (0, T]$, take the constant $\alpha,C_1,C_2$ from \eqref{C1C2} and deduce that
\begin{align}\label{sele alpha}
\alpha=\frac{(L\tau+1)\bar\beta^2+1-\nu}{4\tau}\ge \frac{1-\nu}{4T}\ge \varepsilon.
\end{align}
Since $\tau\le T$, we get from the selection of $T$ in \eqref{LT<1} the inequalities
\begin{align}\label{ineq 1-nu}
(L\tau+1)\bar \beta^2+2L\tau \bar \delta\le (LT+1)\bar \beta^2+2LT \bar \delta <1-\nu.
\end{align}
which bring us to the relationships  
\begin{align}\label{ineq 1/2}
\alpha=\frac{(L\tau+1)\bar\beta^2+1-\nu}{4\tau}\le \frac{2(1-\nu)-2L\tau \bar \delta}{4\tau}=\frac{1-\nu-L\tau \bar \delta}{2\tau}\le \frac{1}{2\tau}.
\end{align} 
It follows from $L\tau \le LT\le 1$ by \eqref{LT<1} and the estimates in \eqref{ineq 1/2}, \eqref{ineq 1-nu} that
\begin{align*}
\frac{C_2}{C_1}&= \frac{\sqrt{2}\max\set{\frac{\nu+1}{\tau },\frac{\bar\beta(L\tau+\nu+1)+L\tau \bar \delta}{\tau }}+4\alpha}{\frac{1-\nu-(L \tau+1)\bar \beta^2-2L\tau \bar \delta}{4\tau}}\\
&= \frac{4\sqrt{2}\max\set{{\nu+1}, \bar\beta(L\tau+\nu+1)+L\tau \bar \delta}+16\alpha\tau}{1-\nu-(L \tau+1)\bar\beta^2-2L\tau \bar \delta}\\
&\le \frac{4\sqrt{2}\max\set{\nu+1,\bar\beta(\nu+2)+\bar\delta}+8}{1-\nu-(L  T+1)\bar\beta^2-2LT \bar \delta}.
\end{align*}
This implies by \eqref{defi xi 1} that 
\begin{align}\label{defi xi 2}
\sqrt{2}\norm{x-\bar x}+\frac{C_2}{C_1}\varphi(f(x)-f(\bar x))&<\frac{\rho}{4} \text{ for all }x\in\mathbb{B}(\bar x,\xi).
\end{align}
The rest of the proof of assertion (i) is split into two claims.

\begin{Claim} The sequences $\set{x^k},\set{x^k_{\rm in}}$, and $\set{x^k_{\rm ex}}$ stay inside $\mathbb{B}(\bar x,\rho)$ for all $k\in\N$.
\end{Claim}
\noindent Since $x^0=x^1\in \mathbb{B}(\bar x,\xi),$ the updates in \eqref{step xin}, \eqref{step xex} of Algorithm~\ref{IGDm} tell us that $x^1_{\rm in}=x^1_{\rm ex}=x^1\in \mathbb{B}(\bar x,\xi)\subset \mathbb{B}(\bar x,\rho).$ We proceed now by induction to show that $x^k,x^k_{\rm in},x^k_{\rm ex}\in \mathbb{B}(\bar x,\rho)$ for all $k\in\N.$ Assuming that $x^k,x^k_\rin,x^k_{\rm ex}\in \mathbb{B}(\bar x,\rho)$ for $k=1,\ldots,K$, let us verify that $x^{K+1},x_\rin^{K+1},x^{K+1}_\rex\in \mathbb{B}(\bar x,\rho).$ It follows from \eqref{ineq 1-nu} and $L\tau\le LT<1-\nu$ by \eqref{LT<1} that assumption \eqref{max Lt<1-nu} in Proposition~\ref{prop AIGD tech} is satisfied. Therefore, we get the estimates
\begin{align}\label{descent con for rate 1}
&C_1 \norm{z^{k+1}-z^k}^2\le H_{\alpha}(z^k)-H_{\alpha}(z^{k+1}),\\
&\norm{\nabla H_{\alpha}(z^{k})}\le C_2\norm{z^{k+1}-z^k}\text{ for all }k=1,\ldots,K-1,\label{descent con for rate 2}
\end{align}
where $z^k=(x^k,x^{k-1})$ as before. Fix any such $k,$ we deduce from the above inequalities that
\begin{align}\label{primary descent condi}
\norm{z^{k+1}-z^k}\cdot\norm{\nabla H_\alpha(z^k)}\le C_2  \norm{z^{k+1}-z^k}^2\le \frac{C_2}{C_1}(H_\alpha(z^k)-H_\alpha(z^{k+1})).
\end{align}
To use inequality \eqref{KL R2n in proof} with $z:=z^k$, it is sufficient to check that $z^k\in Z$ and $ H_\alpha(\bar z)<H_\alpha(z^k)<H_\alpha(\bar z)+\eta$. Note first that the former condition is satisfied due to
\begin{align*}
z^k=(x^k,x^{k-1})\in \mathbb{B}(\bar x,\rho)\times \mathbb{B}(\bar x,\rho)\subset Z.
\end{align*}
 To verify the latter condition, deduce from Proposition~\ref{proposition distance} that $\norm{x^k-x^{k-1}}\le \frac{L\tau\rho}{(1-\bar \beta)(1-\nu)}$. Combining this with the construction of $H_\alpha$, the inequality $\alpha\le \frac{1}{2\tau}$ from \eqref{ineq 1/2}, the algorithm inclusions $x^k,x^{k-1}\in\mathbb{B}(\bar x,\rho)$, and the choice of $\rho$ in \eqref{choice rho} tells us that
\begin{align*}
H_\alpha(z^k)&=f(x^k)+\alpha\norm{x^k-x^{k-1}}^2\\
&\le f(x^k)+\frac{1}{2\tau}\cdot\frac{L^2\tau^2\rho^2}{(1-\bar \beta)^2(1-\nu)^2}\\
&= f(x^k)+\frac{L^2T\rho^2}{2(1-\bar \beta)^2(1-\nu)^2}< f(\bar x)+\eta=H_\alpha(\bar z)+\eta.
\end{align*}
Observe that the inequality $H_\alpha(\bar z)<H_\alpha(z^k)$ is also satisfied. Indeed, its failure means that 
$$
f(x^k)+\alpha\norm{x^{k}-x^{k-1}}^2\le H_\alpha(\bar z)=f(\bar x).
$$
By taking into account that $f(x^k)\ge f(\bar x)$ by $x^k\in\mathbb{B}(\bar x,\rho)$, the latter implies that
\begin{align*}
f(x^k)=f(\bar x) \text{ and }x^{k}=x^{k-1},
\end{align*}
which  means that $\nabla f(x^k)=0$ since $\bar x$ is a local minimizer of $f$ within $\B(\bar x,\rho)$. The equalities above also imply that
$x^k_{\rm ex}=x^k$ by \eqref{step xex}, and so $\nabla f(x^k_{\rm ex})=0,$ which contradicts Assumption~\ref{assu} and thus justifies \eqref{KL R2n in proof} with $z=z^k$. Multiplying both sides of \eqref{primary descent condi} by $\varphi'(H_\alpha(z^k)-H_\alpha(\bar z))$ and using \eqref{KL R2n in proof} give us
\begin{align}\label{ineq alter}
\norm{z^{k+1}-z^k}& \le \varphi'(H_\alpha(z^k)-H_\alpha(\bar z))\norm{\nabla H_\alpha(z^k)}\cdot\norm{z^{k+1}-z^k}\\
&\le \frac{C_2}{C_1}\varphi'(H_\alpha(z^k)-H_\alpha(\bar z))\brac{ H_{\alpha}(z^k)-H_{\alpha}(z^{k+1})}\nonumber\\
&\le\frac{C_2}{C_1}\brac{\varphi(H_\alpha(z^k)-H_\alpha(\bar z))-\varphi(H_\alpha(z^{k+1})-H_\alpha(\bar z))} \text{ for all }k=1,\ldots,K-1,
\end{align}
where the inequality  in \eqref{chain ineq} follows from the concavity of $\varphi.$
Combining further the obtained estimate with $\tau\le T<\frac{(1-\bar \beta)(1-\nu)}{4L\sqrt{2}}$ from \eqref{LT<1} and estimate \eqref{3.3(ii)} from Proposition~\ref{proposition distance},  we have
\begin{align}
\norm{z^{K+1}-\bar z}&\le\norm{z^1-\bar z}+\norm{z^{K+1}-z^K}+\sum_{k=1}^{K-1}\norm{z^{k+1}-z^k}\nonumber\\
&\le \norm{z^1-\bar z}+\norm{z^{K+1}-z^K}+\frac{C_2}{C_1}\sum_{k=1}^{K-1}\brac{\varphi(H_\alpha(z^k)-H_\alpha(\bar z))-\varphi(H_\alpha(z^{k+1})-H_\alpha(\bar z))}\nonumber\\
&=\sqrt{2}\norm{x^1-\bar x}+\frac{\sqrt{2}L\rho\tau}{(1-\bar \beta)(1-\nu)}+\frac{C_2}{C_1}\brac{\varphi(H_\alpha(z^1)-H_\alpha(\bar z))-\varphi(H_\alpha(z^{K})-H_\alpha(\bar z))}\nonumber\\
&\le \sqrt{2}\norm{x^1-\bar x}+\frac{\rho}{4}+\frac{C_2}{C_1}\varphi(f(x^1)-f(\bar x))<\rho/2,\label{chain ineq}
\end{align}
where the last inequality is a consequence of \eqref{defi xi 2} and $x^1\in\B(\bar x,\xi)$. 
Therefore, 
\begin{align*}
\norm{x^{K+1}-\bar x}\le \norm{z^{K+1}-\bar z}\le \rho/2,
\end{align*}
which means that $x^{K+1}\in\mathbb{B}(\bar x,\rho/2)$. Using the updates in \eqref{step xin} and \eqref{step xex} at the $(K+1)^{\rm th}$ iteration with taking into account \eqref{3.3(ii)} from Proposition~\ref{proposition distance}, $\tau\le T<\frac{(1-\bar \beta)(1-\nu)}{4L\sqrt{2}(1+\bar\delta)}$ from \eqref{LT<1}, and  
\begin{align*}
0\le \beta_{K+1}<1,\quad 0\le \delta_{K+1}\le \delta_{K+1}-\beta_{K+1}+\beta_{K+1}\le \bar \delta +1,
\end{align*}
we arrive at the estimates
\begin{align*}
\norm{x^{K+1}_\rin-\bar x}&\le \norm{x^{K+1}-\bar x}+\norm{x^{K+1}-x^K}\le \rho/2+ \frac{2L\tau\rho}{(1-\bar \beta)(1-\nu)}<\rho,\\
\norm{x^{K+1}_\rex-\bar x}&\le \norm{x^{K+1}-\bar x}+(1+\bar \delta)\norm{x^{K+1}-x^K}\le \rho/2+ \frac{2L\tau\rho(1+\bar\delta)}{(1-\bar \beta)(1-\nu)}<\rho.
\end{align*}
Thus $x^{K+1}_\rin,x^{K+1}_\rex\in\mathbb{B}(\bar x,\rho)$, which completes the claimed assertion by induction. 
\medskip

\noindent\textbf{Claim~2.} \textit{The sequences $\set{x^k},\set{x^k_{\rm in}},\set{x^k_{\rm ex}}$ converge to a local minimizer $\tilde x$ of $f$ with $f(\tilde x)=f(\bar x)$.}
\medskip

\noindent Indeed, it follows from \eqref{ineq alter} and $x^k,x^k_\rin,x^k_{\rm ex}\in\mathbb{B}(\bar x,\rho)$ from Claim 1 that 
\begin{align*}
\norm{z^{k+1}-z^k}&\le \frac{C_2}{C_1}\brac{\varphi(H_\alpha(z^k)-H_\alpha(\bar z))-\varphi(H_\alpha(z^{k+1})-H_\alpha(\bar z))} \text{ for all }k\in\N.
\end{align*}
This readily implies that 
\begin{align*}
\sum_{k=1}^\infty \norm{z^{k+1}-z^k}\le \frac{C_2}{C_1}\varphi(H_\alpha(z^1)-H_\alpha(\bar z))<\infty.
\end{align*}
Therefore, $\set{z^k}$ is convergent, and thus $\set{x^k}$ converges to some $\tilde x\in\overline{\mathbb{B}}(\bar x,\rho)\subset U$. Using  Proposition~\ref{convergence xk+1 xk}, we justify the convergence of $\set{x^k_\rex},\set{x^k_\rin}$ to $\tilde x$ and that $\tilde x$ is a stationary point of $f.$ Since $f(\bar x)$ is the only critical value of $f$ within $U,$ it gives us $f(\tilde x)=f(\bar x)$ and shows that $\tilde x$ is a local minimizer of $f$.\vspace*{0.05in} 

To verify (ii), we employ Proposition~\ref{general rate}. Letting $\tilde z:=(\tilde x,\tilde x)$ and using \eqref{descent con for rate 1}, \eqref{descent con for rate 2} from the proof of Claim~1 in (i) together with $\set{x^k},\set{x^k_{\rm in}},\set{x^k_{\rm ex}}\subset\mathbb{B}(\bar x,\rho)$ give us the estimate
\begin{align}\label{ineq rate grad}
\begin{array}{ll}
C_1 \norm{z^{k+1}-z^k}^2\le H_{\alpha}(z^k)-H_{\alpha}(z^{k+1}),\\
\norm{\nabla H_{\alpha}(z^{k})}\le C_2\norm{z^{k+1}-z^k},\quad k\in\N.
\end{array}
\end{align}
Observe from $\nabla f(x^k_\rex)\ne 0$ and $\norm{g^k-\nabla f(x^k_\rex)}\le \nu \norm{g^k}$ that $z^{k+1}\ne z^k$ for all $k\in\N.$ Indeed, the contrary implies by $z^k=(x^{k},x^{k-1})$ that $x^{k+1}=x^k=x^{k-1}$ for some $k\in\N$. Using the updates in \eqref{step xin}, \eqref{step xex} of Algorithm~\ref{IGDm} ensures that $x^k_{\rm ex}=x^k_{\rm in}=x^k=x^{k+1}$. Then $g^k=0$ by \eqref{iteration update}, and hence $\nabla f(x^k_{\rm ex})=0$ by $\norm{g^k-\nabla f(x^k_\rex)}\le \nu \norm{g^k}$. This contradicts the assumption $\nabla f(x^k_{\rm ex})\ne 0$ for all $k\in\N$.

Recalling that $f$ satisfies the PLK condition at $\bar x$ with the desingularizing function $\psi(t)=Mt^{1-q}$, we deduce from Proposition~\ref{isolated}(ii) that $H_\alpha$ satisfies the PLK condition at $(\bar x,\bar x)$ with the neighborhood $Z$ taking from \eqref{KL R2n in proof} and the desingularizing function $\varphi(t)=\overline Mt^{1-\theta}$, where $\theta=\max \set{q,\frac{1}{2}}\in (0,1),\overline{M}>0.$ Taking into account Remark~\ref{algebraic}, this PLK property also holds at $\tilde z\in\overline{\mathbb{B}}(\bar x,\rho)\times \overline{\mathbb{B}}(\bar x,\rho)\subset Z$ due to $H_\alpha(\tilde z)=f(\tilde x)=f(\bar x)=H_\alpha(\bar z).$  Then Proposition~\ref{general rate} applied to the function $H_\alpha$ and sequence $\set{z^k}\subset\R^{n}\times \R^n$, with taking into account Remark~\ref{rmk general rate}, brings us to the following:
\begin{itemize}
\item For $q\in[0,1/2],$ we get $\theta=1/2$, which ensures that $\set{z^k}$ converges linearly to $(\tilde x,\tilde x)$. This implies that $\set{x^k}$ converges linearly to $\tilde x.$

\item For $q\in (1/2,1),$ we get $\theta=q\in (1/2,1)$, which tells us that $\norm{z^k-(\tilde x,\tilde x)}=\mathcal{O}(k^{-\frac{1-q}{2q-1}})$ and thus yields the convergence rate $\norm{x^k-\tilde x}=\mathcal{O}(k^{-\frac{1-q}{2q-1}})$ as $k\rightarrow\infty$.
\end{itemize}

Finally, we employ Proposition~\ref{convergence rate deduce} to verify the convergence rates of $\set{\nabla f(x^k)}$ and $\set{f(x^k)}.$ Since $f$ is of class ${\cal C}^{1,1}$ on $\B(\bar x,r)$ and since $H_\alpha$ is of class ${\cal C}^{1,1}$ on $\B(\bar x,r)\times \B(\bar x,r)\supset \set{z^k}$, condition \eqref{main condition rate} follows from \eqref{ineq rate grad}. This clearly leads us to the conclusions:
\begin{itemize}
\item For $q\in[0,1/2),$ we get $\theta=1/2$. Therefore, $\{H_\alpha(z^k)\}$ converges linearly to $H_\alpha(\bar z)$ and $\{\norm{\nabla H_\alpha(z^k)}\}$ converges linearly to $0$ as $k\to\infty$.

\item For $q\in (1/2,1),$ we have $H_\alpha(z^k)-H_\alpha(\tilde z)=\mathcal{O}(k^{-\frac{2-2q}{2q-1}})$ and $\norm{\nabla H_\alpha(z^k)}=\mathcal{O}(k^{-\frac{1-q}{2q-1}})$ as $k\to\infty$.
\end{itemize}
The claimed convergence rates of the sequences $\{\norm{\nabla f(x^k)}\}$ and $\set{f(x^k)}$ are now deduced from those for $\set{\norm{\nabla H_\alpha (z^k)}},\set{H_\alpha (z^k)}$ and the estimates
\begin{align*}
\norm{\nabla H_\alpha(x,y)}&=\sqrt{\norm{\nabla f(x)+2\alpha (x-y)}^2+4\alpha^2\norm{x-y}^2}\ge \norm{\nabla f(x)},\\
H_\alpha(x,y)&=f(x)+\alpha\norm{x-y}^2\ge f(x)\;\text{ for all }\;x,y\in\R^n.
\end{align*}
This completes the proof of the theorem.
\end{proof}\vspace*{-0.2in}

\section{Convergence of First-Order Methods with Momentum}\label{sec: illustrate}

In this section, we design and justify momentum versions of the three first-order methods, which are well recognized in optimization and its applications; namely, the extragradient and inexact proximal point methods, and sharpness-aware minimization. Our goal is to demonstrate that the convergence analysis of IGDm developed above induces convergence results for the corresponding momentum algorithms.\vspace*{-0.1in}

\subsection{Extragradient Method with Momentum}\label{subsection EGm}

The extragradient algorithm with momentum (EGm) for solving \eqref{main problem} is design as follows.\vspace*{0.05in}

\begin{longfbox}
\begin{Algorithm}[EGm]\hlabel{EG algorithm}\quad
\begin{enumerate}[-]
\item \textbf{Optimization problem}:
\begin{align*}
\eqref{main problem}\text{ with }\begin{cases}
X=\R^n\;\text{ for global minima,}\\
X=\B(\bar x,r)\;\text{ for a local minimizer }\;\bar x\;\text{ with some  }\;r>0.
\end{cases}
\end{align*}
\item  \textbf{Initialization:} Choose $x^0=x^1\in X,\set{\beta_k},\set{\gamma_k}\subset[0,\infty),\tau_1,\tau_2>0$.
\item \textbf{Parameter conditions:} $\bar \beta:=\sup\beta_k<1,\bar\delta:=\sup|\beta_k-\gamma_k|<\infty.$
\item \textbf{Iteration:} $(k\ge 1)$ Update:
\begin{align}
&x^k_{\rm in}:=x^k+\beta_k(x^k-x^{k-1}),\label{xin EG}\\
&x^k_{\rm ex}:=x^k+\gamma_k(x^k-x^{k-1}),\label{xex EG}\\
&x^{k+1}:=x^k_{\rm in}-\tau_1 \nabla f(x^k_{\rm ex} -\tau_2 \nabla f(x^k_{\rm ex}))\label{update EG}.
\end{align}
\end{enumerate}
\end{Algorithm}    
\end{longfbox}\vspace*{0.1in}

Let us show that whenever $\tau_2 \le \frac{\nu}{L(\nu+1)}$ for $\nu \in(0,1)$, Algorithm~\ref{EG algorithm} is a special case of Algorithm~\ref{IGDm} with $\tau:=\tau_1$. It is sufficient to verify that for each $k\in\N$, we have
\begin{align*}
\norm{g^k-\nabla f(x^k_\rex)}\le \nu \norm{g^k},\;\text{ where }\;g^k:= \nabla f(x^k_{\rm ex} -\tau_2 \nabla f(x^k_{\rm ex})).
\end{align*}
Pick $k\in\N$ and deduce from the Lipschitz continuity of $\nabla f$ on $X$ and the condition $L\tau_2\le \frac{\nu}{\nu+1}$ that
\begin{align}\label{com 1}
\norm{g^k-\nabla f(x^k_\rex)}&= \norm{\nabla f(x^k_{\rm ex} -\tau_2 \nabla f(x^k_{\rm ex}))-\nabla f(x^k_\rex)}\nonumber\\
&\le L\tau_2\norm{\nabla f(x^k_\rex)}\le \frac{\nu}{\nu+1}\norm{\nabla f(x^k_\rex)}\\
&\le \frac{\nu}{\nu+1}\norm{\nabla f(x^k_\rex)-g^k}+\frac{\nu}{\nu+1}\norm{g^k}.
\end{align}
This brings us to the estimate 
\begin{align*}
\frac{1}{\nu+1}\norm{\nabla f(x^k_\rex)-g^k}\le \frac{\nu}{\nu+1}\norm{g^k},\;\mbox{ i.e., }\;\norm{\nabla f(x^k_\rex)-g^k}\le {\nu}\norm{g^k}.
\end{align*}
Therefore, we arrive at the following global and local convergence results for EGm that are immediate consequences of Theorem~\ref{global IGDm} and Theorem~\ref{local con IGDm}, respectively.

\begin{Theorem}\label{glo EG}
Let $\set{x^k}$ be the sequence of iterates generated by Algorithm~{\rm\ref{EG algorithm}} with $X=\R^n,\;\tau_2\le \frac{\nu}{L(\nu+1)}$ for some $\nu\in(0,1)$, and $\max\set{L\tau_1,2L\tau_1\bar \delta+(L\tau_1+1)\bar\beta^2}<1-\nu$. If $\inf_{k\in\N} f(x^k)>-\infty,$ then all the assertions of Theorem~{\rm\ref{global IGDm}} hold for $\set{x^k}$.
\end{Theorem}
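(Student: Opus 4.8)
The plan is to recognize EGm as a special case of IGDm and then simply invoke Theorem~\ref{global IGDm}. To make the identification precise, I would set $\tau := \tau_1$ and introduce the surrogate gradient $g^k := \nabla f\bigl(x^k_\rex - \tau_2 \nabla f(x^k_\rex)\bigr)$. With these choices the EGm update \eqref{update EG} becomes $x^{k+1} = x^k_\rin - \tau g^k$, which is exactly the IGDm update \eqref{iteration update}; moreover the inertial and extrapolation steps \eqref{xin EG}, \eqref{xex EG} coincide verbatim with \eqref{step xin}, \eqref{step xex}. Thus the two algorithms generate the same iterate sequence once $g^k$ is shown to satisfy the relative inexact condition built into \eqref{iteration update}.

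The only computation of substance is the verification of $\norm{g^k - \nabla f(x^k_\rex)} \le \nu \norm{g^k}$, which is carried out in the chain \eqref{com 1} preceding the statement. I would reproduce it by first applying the $L$-Lipschitz continuity of $\nabla f$ on $X$ to get $\norm{g^k - \nabla f(x^k_\rex)} = \norm{\nabla f(x^k_\rex - \tau_2 \nabla f(x^k_\rex)) - \nabla f(x^k_\rex)} \le L\tau_2 \norm{\nabla f(x^k_\rex)}$, then using the hypothesis $\tau_2 \le \frac{\nu}{L(\nu+1)}$, equivalently $L\tau_2 \le \frac{\nu}{\nu+1}$, together with the triangle inequality $\norm{\nabla f(x^k_\rex)} \le \norm{\nabla f(x^k_\rex) - g^k} + \norm{g^k}$. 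A short rearrangement then isolates $\frac{1}{\nu+1}\norm{\nabla f(x^k_\rex) - g^k} \le \frac{\nu}{\nu+1}\norm{g^k}$, which is the required bound. The single point deserving care here is that the admissible range of $\tau_2$ makes the inexactness \emph{relative} (proportional to $\norm{g^k}$) rather than absolute, so that it fits the hypothesis of Algorithm~\ref{IGDm}.

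It remains only to match the hypotheses of Theorem~\ref{global IGDm} to the data at hand, which is pure bookkeeping. The parameter conditions $\bar\beta = \sup\beta_k < 1$ and $\bar\delta = \sup|\beta_k - \gamma_k| < \infty$ are imposed identically in both algorithms; the spectral requirement $\max\set{L\tau_1, 2L\tau_1\bar\delta + (L\tau_1+1)\bar\beta^2} < 1-\nu$ assumed in the statement is precisely \eqref{glob-assum} with $\tau = \tau_1$; and the lower-boundedness $\inf_{k\in\N} f(x^k) > -\infty$ is shared. Consequently the EGm sequence $\set{x^k}$ qualifies as a sequence of IGDm iterates meeting every hypothesis of Theorem~\ref{global IGDm}, so all of its assertions (i)--(iii) hold verbatim. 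There is no genuine obstacle beyond this: the result is a corollary of the IGDm theory, with the relative inexactness verification being the only nontrivial ingredient, and that has already been established.
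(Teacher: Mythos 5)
Your proposal is correct and follows exactly the paper's argument: identify $g^k := \nabla f\bigl(x^k_\rex - \tau_2\nabla f(x^k_\rex)\bigr)$ with $\tau := \tau_1$, verify the relative inexactness $\norm{g^k - \nabla f(x^k_\rex)} \le \nu\norm{g^k}$ via Lipschitz continuity and the triangle-inequality rearrangement (the chain \eqref{com 1}), and then invoke Theorem~\ref{global IGDm}. No differences of substance.
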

\begin{Theorem}\label{lo EG}
Let $\set{x^k}$ be the sequence of iterates generated by Algorithm~{\rm\ref{EG algorithm}} with $X=\B(\bar x,r)$ for some local minimizer $\bar x$ of $f$ with $r>0$. Then given $\nu\in(0,1)$ and $\bar \beta\in [0,\sqrt{1-\nu})$, there exist numbers $\xi > 0$ and $T_1>0$ such that for any initial points $x^0\in \mathbb{B}(\bar x, \xi)$, $\tau_1\in (0, T]$, and $\tau_2\in \sbrac{0,\frac{\nu}{L(\nu+1)}}$, the sequences $\set{x^k},\set{x^k_\rex},\set{x^k_\rin}$ generated by Algorithm~{\rm\ref{EG algorithm}} have the convergence properties of Theorem~{\rm\ref{local con IGDm}}.
\end{Theorem}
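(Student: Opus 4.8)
The plan is to deduce Theorem~\ref{lo EG} directly from the local convergence result for IGDm in Theorem~\ref{local con IGDm}, exploiting the identification of EGm as a local instance of IGDm that was set up just before the statement. First I would recall that computation: setting $\tau:=\tau_1$ and
\[
g^k:=\nabla f\bigl(x^k_\rex-\tau_2\nabla f(x^k_\rex)\bigr),
\]
the bound $L\tau_2\le\frac{\nu}{\nu+1}$ forces the relative inexactness $\norm{g^k-\nabla f(x^k_\rex)}\le\nu\norm{g^k}$, so the EGm update \eqref{update EG} is exactly the IGDm update \eqref{iteration update}. Consequently the two algorithms generate the same sequences $\set{x^k},\set{x^k_\rin},\set{x^k_\rex}$ under the shared parameter conditions $\bar\beta<1$ and $\bar\delta<\infty$, and the hypothesis $\bar\beta\in[0,\sqrt{1-\nu})$ is precisely the one required by Theorem~\ref{local con IGDm}.

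The one point where the reduction is \emph{not} automatic, and the step I expect to need the most care, is that in the local setting the Lipschitz continuity of $\nabla f$ is only available on $X=\B(\bar x,r)$, whereas the EGm gradient $g^k$ is evaluated at the auxiliary point $y^k:=x^k_\rex-\tau_2\nabla f(x^k_\rex)$, which need not lie in $\B(\bar x,r)$; thus the verification \eqref{com 1} of the inexactness must be guaranteed along the whole trajectory. To control $y^k$ I would use that $\bar x$ is a local minimizer, so $\nabla f(\bar x)=0$, together with Lipschitz continuity to estimate
\[
\norm{y^k-\bar x}\le\norm{x^k_\rex-\bar x}+\tau_2\norm{\nabla f(x^k_\rex)-\nabla f(\bar x)}\le(1+L\tau_2)\norm{x^k_\rex-\bar x}<2\norm{x^k_\rex-\bar x},
\]
the last inequality following from $L\tau_2\le\frac{\nu}{\nu+1}<1$. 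Hence it suffices to apply Theorem~\ref{local con IGDm} on a shrunken ball $\B(\bar x,r')$ with $r'\le r/2$: the IGDm induction then keeps the iterates (and $x^k_\rex$) inside $\B(\bar x,r')$, which in turn keeps $y^k\in\B(\bar x,r)$ at every index used in the induction, so \eqref{com 1} — and therefore the relative inexactness — holds exactly where the proof of Theorem~\ref{local con IGDm} needs it, and $\nabla f$ is Lipschitz at every point actually queried by the algorithm.

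With this radius adjustment in place, I would invoke Theorem~\ref{local con IGDm} applied to $f$ on $\B(\bar x,r')$ to obtain $\xi\in(0,r']$ and $T>0$, and then set $T_1:=T$. The remaining hypotheses transfer verbatim: Assumption~\ref{assu}, namely $\nabla f(x^k_\rex)\ne 0$ for all $k$, is the standing nonstationarity requirement and additionally yields $g^k\ne 0$, since $g^k=0$ would force $\norm{\nabla f(x^k_\rex)}\le\nu\norm{g^k}=0$. Therefore, for every $x^0\in\B(\bar x,\xi)$, $\tau_1\in(0,T_1]$, and $\tau_2\in[0,\frac{\nu}{L(\nu+1)}]$, Theorem~\ref{local con IGDm} applies to the common sequences and delivers all of its conclusions — the containment in $\B(\bar x,r)$ and the convergence of $\set{x^k},\set{x^k_\rin},\set{x^k_\rex}$ to a local minimizer $\tilde x$ with $f(\tilde x)=f(\bar x)$, together with the exponential PLK convergence rates of part~(ii) — which is exactly the assertion of Theorem~\ref{lo EG}.
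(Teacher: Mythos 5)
Your proposal is correct and follows the same route as the paper: identify EGm as an instance of IGDm by verifying the relative inexactness $\norm{g^k-\nabla f(x^k_\rex)}\le\nu\norm{g^k}$ via \eqref{com 1}, then invoke Theorem~\ref{local con IGDm} verbatim. The one place you go beyond the paper is worth keeping: the paper's deduction treats \eqref{com 1} as already established, but that computation uses the Lipschitz continuity of $\nabla f$ on $X$ at the auxiliary point $x^k_\rex-\tau_2\nabla f(x^k_\rex)$, which is automatic only when $X=\R^n$; in the local case $X=\B(\bar x,r)$ this point can escape the ball, and your estimate $\norm{y^k-\bar x}\le(1+L\tau_2)\norm{x^k_\rex-\bar x}$ (using $\nabla f(\bar x)=0$) together with running the IGDm argument on a shrunken ball $\B(\bar x,r')$ with $r'\le r/2$ is exactly the right repair. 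So your argument is not merely equivalent to the paper's one-line reduction --- it closes a small gap in it.
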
\vspace*{-0.15in}

\subsection{Sharpness-Aware Minimization with Momentum}\label{SAMm}
This subsection designs the algorithm of sharpness-aware minimization with momentum (SAMm).\vspace*{0.05in}

\begin{longfbox}
\begin{Algorithm}[SAMm]\label{SAM algorithm}\quad
\begin{enumerate}[-]
\item \textbf{Optimization problem}:
\begin{align*}
\eqref{main problem}\;\text{ with }\begin{cases}
X=\R^n\;\text{ for global minima},\\
X=\B(\bar x,r)\;\text{ for a local minimizer }\;\bar x\;\text{ with some  }\;r>0.
\end{cases}
\end{align*}
\item  \textbf{Initialization:} Choose $x^0=x^1\in X,\set{\beta_k},\set{\gamma_k}\subset[0,\infty),\tau_1,\tau_2>0$.
\item \textbf{Parameter conditions:} $\bar \beta:=\sup\beta_k<1,\bar\delta:=\sup|\beta_k-\gamma_k|<\infty.$
\item \textbf{Iteration:} $(k\ge 1)$ Update:
\begin{align*}
&x^k_{\rm in}:=x^k+\beta_k(x^k-x^{k-1}),\\
&x^k_{\rm ex}:=x^k+\gamma_k(x^k-x^{k-1}),\\
&x^{k+1}:=x^k_{\rm in}-\tau_1 \nabla f(x^k_{\rm ex} +\tau_2 \nabla f(x^k_{\rm ex})).
\end{align*}
\end{enumerate}
\end{Algorithm}    
\end{longfbox}\vspace*{0.1in}

\noindent Similarly to EGm in Subsection~\ref{subsection EGm}, observe that whenever $\tau_2\le \frac{\nu}{L(\nu+1)}$ for some $\nu\in (0,1)$, Algorithm~\ref{SAM algorithm} is a special case of Algorithm~\ref{IGDm} with $\tau:=\tau_1$. Thus we deduce the following convergence properties.

\begin{Theorem}\label{glo SAM}
Let $\set{x^k}$ be the sequence of iterates generated by Algorithm~{\rm\ref{SAM algorithm}} with $X=\R^n,\;\tau_2\le \frac{\nu}{L(\nu+1)}$, and $\max\set{L\tau_1,2L\tau_1\bar \delta+(L\tau_1+1)\bar\beta^2}<1-\nu$. If $\inf_{k\in\N} f(x^k)>-\infty,$ then all the assertions of Theorem~{\rm\ref{global IGDm}} hold for $\set{x^k}$.
\end{Theorem}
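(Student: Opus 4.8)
The plan is to recognize that SAMm (Algorithm~\ref{SAM algorithm}) is a special instance of IGDm (Algorithm~\ref{IGDm}) with step size $\tau := \tau_1$ and inexact gradient $g^k := \nabla f\big(x^k_{\rm ex} + \tau_2 \nabla f(x^k_{\rm ex})\big)$. Once the relative inexact condition $\norm{g^k - \nabla f(x^k_{\rm ex})} \le \nu \norm{g^k}$ is established for every $k\in\N$, the SAMm update $x^{k+1} = x^k_{\rm in} - \tau_1 g^k$ coincides with the IGDm update \eqref{iteration update}, so all conclusions of Theorem~\ref{global IGDm} transfer verbatim to $\set{x^k}$.

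The verification of the inexact condition mirrors the computation preceding Theorem~\ref{glo EG}: the only difference between SAMm and EGm is the sign of the inner perturbation $\tau_2 \nabla f(x^k_{\rm ex})$, which leaves the norm estimate unchanged. First I would invoke the Lipschitz continuity of $\nabla f$ on $X = \R^n$ together with $L\tau_2 \le \frac{\nu}{\nu+1}$ to obtain
\begin{align*}
\norm{g^k - \nabla f(x^k_{\rm ex})} &= \norm{\nabla f\big(x^k_{\rm ex} + \tau_2 \nabla f(x^k_{\rm ex})\big) - \nabla f(x^k_{\rm ex})} \\
&\le L\tau_2 \norm{\nabla f(x^k_{\rm ex})} \le \frac{\nu}{\nu+1}\norm{\nabla f(x^k_{\rm ex})}.
\end{align*}
Then, using $\norm{\nabla f(x^k_{\rm ex})} \le \norm{\nabla f(x^k_{\rm ex}) - g^k} + \norm{g^k}$ and rearranging, I would arrive at $\frac{1}{\nu+1}\norm{g^k - \nabla f(x^k_{\rm ex})} \le \frac{\nu}{\nu+1}\norm{g^k}$, which is equivalent to the desired $\norm{g^k - \nabla f(x^k_{\rm ex})} \le \nu \norm{g^k}$.

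It then remains only to confirm that the parameter hypotheses agree. The conditions $\bar\beta = \sup\beta_k < 1$, $\bar\delta = \sup|\beta_k - \gamma_k| < \infty$, and $\max\set{L\tau_1,\, 2L\tau_1\bar\delta + (L\tau_1+1)\bar\beta^2} < 1-\nu$ imposed in Algorithm~\ref{SAM algorithm} and in the theorem statement match exactly---upon setting $\tau = \tau_1$---the hypotheses \eqref{glob-assum} of Theorem~\ref{global IGDm}, while $\inf_{k\in\N} f(x^k) > -\infty$ is assumed directly. There is essentially no hard step here: the entire content reduces to the one-line Lipschitz estimate above, after which SAMm is a genuine special case of IGDm and every assertion of Theorem~\ref{global IGDm} applies. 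The only point I would take care to check is that those conclusions are phrased in terms of the original iterates $\set{x^k}$ and not merely the Lyapunov variables $z^k = (x^k, x^{k-1})$; this translation is already carried out inside the proof of Theorem~\ref{global IGDm}, so nothing new is needed.
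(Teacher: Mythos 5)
Your proposal is correct and follows exactly the paper's route: the paper likewise reduces SAMm to IGDm with $\tau:=\tau_1$ by verifying the relative inexact condition for $g^k=\nabla f\big(x^k_{\rm ex}+\tau_2\nabla f(x^k_{\rm ex})\big)$ via the same Lipschitz estimate used for EGm, noting that the sign of the inner perturbation is immaterial. Nothing is missing.
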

\begin{Theorem}\label{lo SAM}
Let $\set{x^k}$ be the sequence of iterates generated by Algorithm~{\rm\ref{SAM algorithm}} with $X=\B(\bar x,r)$ for some local minimizer $\bar x$ of $f$ and $r>0$. Then given $\nu\in(0,1)$ and $\bar \beta\in [0,\sqrt{1-\nu})$, there exist numbers $\xi > 0$ and $T_1>0$ such that for any initial points $x^0=x^1 \in \mathbb{B}(\bar x, \xi)$, $\tau_1\in (0, T]$, and $\tau_2\in \sbrac{0,\frac{\nu}{L(\nu+1)}}$, the sequences $\set{x^k},\;\set{x^k_\rex},\;\set{x^k\rin}$ generated by Algorithm~{\rm\ref{SAM algorithm}} satisfy the assertions in Theorem~{\rm\ref{local con IGDm}}.
\end{Theorem}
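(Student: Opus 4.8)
The plan is to reduce Algorithm~\ref{SAM algorithm} to an instance of Algorithm~\ref{IGDm} and then invoke Theorem~\ref{local con IGDm} verbatim, exactly in the way that Theorem~\ref{lo EG} was obtained from it. To this end I set $\tau:=\tau_1$ and define $g^k:=\nabla f(x^k_\rex+\tau_2\nabla f(x^k_\rex))$, so that the SAMm update reads $x^{k+1}=x^k_\rin-\tau_1 g^k$, which is precisely the form \eqref{iteration update} of the IGDm iteration. It then remains to confirm that this $g^k$ obeys the relative inexactness condition $\norm{g^k-\nabla f(x^k_\rex)}\le\nu\norm{g^k}$ demanded by IGDm.

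The verification of the inexactness condition is identical to the one already carried out for EGm, and the only point worth stressing is that it is insensitive to the sign used in the inner step. Indeed, since $\norm{(x^k_\rex+\tau_2\nabla f(x^k_\rex))-x^k_\rex}=\tau_2\norm{\nabla f(x^k_\rex)}$, the Lipschitz continuity of $\nabla f$ on $X$ gives $\norm{g^k-\nabla f(x^k_\rex)}\le L\tau_2\norm{\nabla f(x^k_\rex)}$, exactly as in the EGm estimate \eqref{com 1} with the minus sign replaced by a plus. Using $L\tau_2\le\frac{\nu}{\nu+1}$ together with the triangle inequality $\norm{\nabla f(x^k_\rex)}\le\norm{\nabla f(x^k_\rex)-g^k}+\norm{g^k}$, I would rearrange to reach $\frac{1}{\nu+1}\norm{\nabla f(x^k_\rex)-g^k}\le\frac{\nu}{\nu+1}\norm{g^k}$, that is, the desired bound $\norm{g^k-\nabla f(x^k_\rex)}\le\nu\norm{g^k}$.

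With this inclusion established, the statement follows mechanically. The parameter conditions of SAMm ($\bar\beta<1$, $\bar\delta<\infty$) coincide with those of IGDm, and the standing hypotheses $\nu\in(0,1)$ and $\bar\beta\in[0,\sqrt{1-\nu})$ are exactly the ones required by Theorem~\ref{local con IGDm}. Applying that theorem with $\tau=\tau_1\in(0,T_1]$ and with $\tau_2$ ranging freely over $\sbrac{0,\frac{\nu}{L(\nu+1)}}$ immediately yields the containment of $\set{x^k},\set{x^k_\rin},\set{x^k_\rex}$ in $\B(\bar x,r)$, their convergence to a local minimizer $\tilde x$ with $f(\tilde x)=f(\bar x)$, and, under the exponential PLK condition with $\psi(t)=Mt^{1-q}$, the asserted convergence rates for $q=1/2$ and $q\in(1/2,1)$.

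I do not anticipate any genuine obstacle. All the technical content, namely the Lyapunov descent and gradient estimates of Proposition~\ref{prop AIGD tech}, the uniform desingularizing construction of Proposition~\ref{isolated}, and the induction keeping the iterates inside the ball, is already discharged inside the proof of Theorem~\ref{local con IGDm}. The sole new ingredient is the sign-independent inexactness bound above, so Theorem~\ref{lo SAM} is in effect an immediate corollary.
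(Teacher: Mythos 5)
Your proposal is correct and follows essentially the same route as the paper: the paper likewise reduces Algorithm~\ref{SAM algorithm} to Algorithm~\ref{IGDm} with $\tau:=\tau_1$ by checking that $g^k:=\nabla f(x^k_{\rm ex}+\tau_2\nabla f(x^k_{\rm ex}))$ satisfies the relative inexactness condition via the sign-independent Lipschitz estimate $\norm{g^k-\nabla f(x^k_\rex)}\le L\tau_2\norm{\nabla f(x^k_\rex)}$ and the same rearrangement used for EGm, and then invokes Theorem~\ref{local con IGDm} directly.
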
\vspace*{-0.15in}

\subsection{Inexact Proximal Point Method with Momentum}\label{subsection IPPM}

This subsection addresses the following optimization problem:
\begin{align}\label{PPM problem}
{\rm minimize}\quad h(x)\;\text{ subject to }\;x\in\R^n,
\end{align}
where $h:\R^n\rightarrow\overline{\R}$ is an extended-real valued function. Although problems \eqref{main problem} and \eqref{PPM problem} are written in a similar unconstrained format, there exists a major difference between minimizing a smooth objective in \eqref{main problem} and a nonsmooth extended-real-valued objective in \eqref{PPM problem}. Since  \eqref{PPM problem} mandatorily contains the implicit constraint $x\in\dom h$, it provides a bridge to study explicitly constrained optimization problems. The {\em inexact proximal point method} to solve convex problems \eqref{PPM problem} was developed in \cite{Rockafellar1976}. The design and justification of the inexact proximal point algorithm in \cite{Rockafellar1976} was based on the classical tools of convex analysis and theory of monotone operators. 

Now we proceed with developing a nonconvex version of the inexact proximal point algorithm with momentum and establish its convergence properties based on the above results for IGDm and some fundamental constructions of variational analysis. First we recall the following notions.

\begin{Definition} {\rm 
For a proper extended-real-valued function $h:\R^n\rightarrow\overline{\R}$ and a parameter $\lambda>0$, the {\em Moreau envelope} $e_\lambda h$ and the associated {\em proximal mapping} $\prox_{\lambda h}$ are defined by
\begin{align}\label{defi of envelope}
e_\lambda h(x)&:=\inf \set{h(y)+\frac{1}{2\lambda}\norm{y-x}^2\;\Big|\;y\in\R^n},\\
\prox_{\lambda h}(x)&:={\rm argmin} \set{h(y)+\frac{1}{2\lambda}\norm{y-x}^2\;\Big|\;y\in\R^n}.\label{Moreau}
\end{align}
A function $h:\R^n\rightarrow \Bar\R$ is called {\em prox-bounded} if there is $\lambda>0$ such that $e_\lambda h(x)>-\infty$ for some $x\in\R^n$.}
\end{Definition}

The novel inexact proximal point method with momentum (IPPm) to solve \eqref{PPM problem} is defined as follows.\vspace*{0.05in}

\begin{longfbox}
\begin{Algorithm}[IPPm]\label{IPP algorithm}\quad
\begin{enumerate}[-]
\item \textbf{Optimization problem:}
\begin{align*}
\eqref{PPM problem}\;\text{ with }\;\begin{cases}
X=\R^n\;\text{ for global minimum, } \\
X=\B(\bar x,r)\;\text{ for a local minimizer }\;\bar x\;{ with some }\;r>0.
\end{cases}
\end{align*}
\item \textbf{Initialization:}  Choose $x^0=x^1\in X,\;\nu\in (0,1),\;\set{\beta_k},\;\set{\gamma_k}\subset[0,\infty),\;\lambda>0,\;\tau>0$.
\item \textbf{Parameter conditions:} $\bar{\beta}:=\sup \beta_k<1,$ $\bar \delta:=\sup|\beta_k-\gamma_k|<\infty.$
\item \textbf{Iteration: }$(k\ge 1)$ Update:
\begin{align}
&x^k_{\rm in}:=x^k+\beta_k(x^k-x^{k-1}),\nonumber\\
&x^k_{\rm ex}:=x^k+\gamma_k(x^k-x^{k-1}),\nonumber\\
&g^k:=\lambda^{-1}(x^k_{\rm ex}-p^k),\;\text{ where }\;\norm{p^k-\prox_{\lambda h}(x^k_{\rm ex})}\le \nu\norm{p^k-x^k_{\rm ex}}\label{PPM selection gk},\\
&x^{k+1}:=x^k_{\rm in}-\tau g^k.\nonumber
\end{align}
\end{enumerate}
\end{Algorithm}
\end{longfbox}\vspace*{0.1in}
\noindent

Our goal is to establish global and local convergence results for IPPm. To accomplish this, we consider two remarkable classes of generally nonconvex extended-real-valued functions enjoying amenable properties. which include the single-valuedness of the proximal mapping \eqref{Moreau}; see, e.g., \cite{RockafellarWets}.

\begin{Definition}\label{def:prox-reg}\rm Let  $h:\R^n\rightarrow \Bar\R$ be a proper l.s.c.\ function. Then we say that:
\begin{enumerate}
\item[\bf(i)] The function $h$ is {\em $\varrho$-weakly convex} on $\R^n$ with some $\varrho\ge 0$ if the quadratically shifted function $h(\cdot)+\dfrac{\varrho}{2}\norm{\cdot}^2$ is convex on $\R^n$.

\item[\bf(ii)] The function $h$ is {\em prox-regular} at $\bar x\in\dom h$ for $\bar v\in\partial h(\bar x)$ if there exist $\varepsilon>0$ and $\rho\ge 0$ such that 
\begin{align*}
 h(u)\ge h(x)+\dotproduct{v,u-x}-\frac{\rho}{2}\norm{u-x}^2
\end{align*}
whenever $x\in\mathbb{B}_\varepsilon(\bar x)$ and $(u,v)\in \gph \partial h\cap (\mathbb{B}_\varepsilon(\bar x)\times \mathbb{B}_\varepsilon(\bar v))$ with $h(x)<h(\bar x)+\varepsilon$.
\end{enumerate}
\end{Definition}

Now we present several propositions containing some properties of functions from both classes in Definition~\ref{def:prox-reg} that are used in what follows.

\begin{Proposition}\label{weakly and descent}
Let $h :\R^n\rightarrow\overline{\R}$ be a $\varrho$-weakly convex function and $\lambda\in(0,\varrho^{-1})$. Then we have:
\begin{enumerate} 
\item[\bf(i)]  The mapping $\prox_{\lambda h}$ is single-valued on $\R^n$.

\item[\bf(ii)] The Moreau envelope $e_{\lambda}g$ is $\mathcal{C}^1$-smooth and its gradient is expressed as
\begin{align}\label{prop 2.8 i}
\nabla e_{\lambda}h(x)=\lambda^{-1}(x-\prox_{\lambda h }(x)),\quad x\in\R^n,
\end{align}
while being globally Lipschitz continuous on $\R^n$ with modulus $\max \set{\lambda^{-1},\frac{\varrho}{1-\lambda\varrho}}$. 

\item[\bf(iii)] Every stationary point of $e_\lambda h$ is a stationary point of $h$.

\item[\bf(iv)] It holds that $\displaystyle{\inf_{x\in\R^n}} e_\lambda f(x)=\displaystyle{\inf_{x\in\R^n}} h(x)$.
\end{enumerate}
\end{Proposition}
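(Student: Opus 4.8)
The plan is to reduce all four assertions to the convex setting by working with the convex function $g:=h+\frac{\varrho}{2}\norm{\cdot}^2$, whose convexity is exactly the definition of $\varrho$-weak convexity. For (i), I would rewrite the proximal objective as $h(y)+\frac{1}{2\lambda}\norm{y-x}^2=g(y)+\frac12\brac{\frac1\lambda-\varrho}\norm{y}^2-\frac1\lambda\dotproduct{y,x}+\frac{1}{2\lambda}\norm{x}^2$, and observe that since $\lambda\in(0,\varrho^{-1})$ the coefficient $\frac1\lambda-\varrho$ is strictly positive. Hence the objective is strongly convex in $y$ with modulus $\frac1\lambda-\varrho$, so it is proper, l.s.c., coercive, and admits a unique minimizer; this gives the single-valuedness of $\prox_{\lambda h}$ on all of $\R^n$.

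For (ii), completing the square in the display above shows that $\prox_{\lambda h}(x)=\prox_{sg}(cx)$ and $e_\lambda h(x)=e_sg(cx)+\kappa\norm{x}^2$, where $c:=(1-\lambda\varrho)^{-1}$, $s:=\lambda(1-\lambda\varrho)^{-1}$, and $\kappa$ is an explicit constant. Because $g$ is convex, the classical Moreau-envelope theory guarantees that $e_sg$ is $\mathcal{C}^1$-smooth with $\nabla e_sg(w)=\frac1s\brac{w-\prox_{sg}(w)}$; applying the chain rule to $e_\lambda h(x)=e_sg(cx)+\kappa\norm{x}^2$ and simplifying the coefficients (which collapse precisely because $s=\lambda c$ and $2\kappa=(1-c)/\lambda$) yields both the $\mathcal{C}^1$-smoothness of $e_\lambda h$ and the stated gradient formula $\nabla e_\lambda h(x)=\lambda^{-1}(x-\prox_{\lambda h}(x))$.

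The main obstacle is the exact Lipschitz modulus $\max\set{\lambda^{-1},\frac{\varrho}{1-\lambda\varrho}}$, which I would obtain through the firm nonexpansiveness of $\prox_{sg}$. Writing $A:=\prox_{sg}(cx)-\prox_{sg}(cx')$ and $e:=x-x'$, firm nonexpansiveness gives $\norm{A}^2\le c\dotproduct{A,e}$, and from $\nabla e_\lambda h(x)-\nabla e_\lambda h(x')=\lambda^{-1}(e-A)$ I would expand $\norm{e-A}^2=\norm{e}^2-2\dotproduct{A,e}+\norm{A}^2$. A short case analysis follows: when $\lambda\varrho\le\frac12$ (equivalently $c\le2$) the term $(c-2)\dotproduct{A,e}$ is nonpositive, forcing $\norm{e-A}\le\norm{e}$ and modulus $\lambda^{-1}$; when $\lambda\varrho>\frac12$ the bounds $\norm{A}\le c\norm{e}$ and $\dotproduct{A,e}\ge\norm{A}^2/c$ give the tight estimate $\norm{e-A}\le(c-1)\norm{e}$, and since $(c-1)/\lambda=\varrho/(1-\lambda\varrho)$ this is the modulus in that regime. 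The two cases combine into the claimed maximum, with each threshold matching exactly where the two candidate constants cross.

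Assertions (iii) and (iv) are then immediate. For (iii), a stationary point $\bar x$ of $e_\lambda h$ satisfies $\nabla e_\lambda h(\bar x)=0$, hence $\bar x=\prox_{\lambda h}(\bar x)$ by the gradient formula; the first-order optimality condition for the proximal subproblem, obtained from the subdifferential sum rule, reads $\lambda^{-1}(\bar x-\prox_{\lambda h}(\bar x))\in\partial h(\prox_{\lambda h}(\bar x))$, which at $\bar x=\prox_{\lambda h}(\bar x)$ becomes $0\in\partial h(\bar x)$. For (iv), taking $y=x$ in the infimum defining $e_\lambda h$ gives $e_\lambda h(x)\le h(x)$, while dropping the nonnegative quadratic term gives $e_\lambda h(x)\ge\inf_{y}h(y)$ for every $x$; taking infima over $x$ sandwiches $\inf_x e_\lambda h(x)$ between $\inf h$ and $\inf h$, yielding equality.
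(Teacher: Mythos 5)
Your proof is correct, but it takes a genuinely different route from the paper for parts (i)--(iii): the paper simply cites external results (Davis--Drusviatskiy for (i) and (ii), and an earlier paper of the authors for (iii)) and only writes out the two-line argument for (iv), which you reproduce essentially verbatim. Your reduction to the convex function $g:=h+\frac{\varrho}{2}\norm{\cdot}^2$ via the affine rescaling $\prox_{\lambda h}(x)=\prox_{sg}(cx)$ with $c=(1-\lambda\varrho)^{-1}$, $s=\lambda c$ is the standard mechanism behind those cited lemmas, and your computations check out: the coefficient identities $c/s=\lambda^{-1}$ and $2\kappa=(1-c)/\lambda=-\varrho/(1-\lambda\varrho)$ do collapse the chain rule to the stated gradient formula, and the firm-nonexpansiveness case analysis correctly recovers the exact modulus (the threshold $c=2$, i.e.\ $\lambda\varrho=\tfrac12$, is precisely where $\lambda^{-1}$ and $\varrho/(1-\lambda\varrho)$ cross, and the bound $\norm{e-A}^2\le\norm{e}^2+(1-2/c)\norm{A}^2\le(c-1)^2\norm{e}^2$ is valid exactly when $c\ge 2$). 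What your approach buys is a self-contained derivation, including an actual proof of the Lipschitz constant that the paper only quotes; what it costs is length, and a mild reliance on the sum rule for the limiting subdifferential in (iii), which you should note is legitimate here because the quadratic term is smooth. One small point worth making explicit in (i): existence of the minimizer uses that $g$, being proper, convex, and l.s.c., admits an affine minorant, so the strongly convex objective is coercive as well as strictly convex.
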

\begin{proof} The results in (i) and (ii) follow from \cite[Lemma~2.5]{DavisDrusviatskiy2022}, while (iii) is established in \cite[Proposition~3.9]{KhanhMordukhovichPhatTran} in more generality. To verify (iv), we first deduced from \eqref{Moreau} that
\begin{align*}
e_\lambda h(x)\le h(x)\text{ for all }x\in\R^n,
\end{align*}
which implies that $\displaystyle{\inf_{x\in\R^n}} e_\lambda f(x)\le \displaystyle{\inf_{x\in\R^n}} h(x)$. The  opposite inequality follows from 
\begin{align*}
e_\lambda h(x) = h(\prox_{\lambda h}(x))+\frac{1}{2\lambda}\norm{\prox_{\lambda h}(x)-x}^2\ge h(\prox_{\lambda h}(x))\ge \inf_{x\in\R^n} h(x),
\end{align*}
and therefore the proof is complete.
\end{proof}

Next we recall some important properties of Moreau envelopes and proximal mappings associated with prox-regular functions; see  \cite[Proposition~13.37]{RockafellarWets}.

\begin{Proposition}\label{C11} Let $ h: \R^n \to \overline{\R}$ be prox-bounded on $\R^n$ and prox-regular at $\bar{x}$ for $\bar{v}$. Then	for all $\lambda>0$ sufficiently small, there is a neighborhood $U_\lambda$ of $\bar{x}+\lambda\bar{v}$ such that
\begin{enumerate}[\rm (i)]
\item[\bf(i)] ${\operatorname{Prox}}_{\lambda  h}$ is single-valued and Lipschitz continuous on $U_\lambda$ with ${\text{\rm Prox}}_{\lambda  h}(\bar{x}+\lambda\bar{v})=\bar{x}$. 

\item[\bf(ii)] $e_\lambda h$ is of class ${\cal C}^{1,1}$ on $U_\lambda$ with $\nabla e_\lambda h(\bar{x}+\lambda\bar{v})=\bar{v}$ and
\begin{equation}\label{GradEnvelope}
\nabla e_\lambda  h(x)=\lambda^{-1}(x-\operatorname{ Prox}_{\lambda h}(x))\;\mbox{ for all }\;x\in U_\lambda.
\end{equation}
\end{enumerate} 
\end{Proposition}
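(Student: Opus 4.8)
The plan is to exploit the localized convexification of the proximal subproblem that prox-regularity provides, following the line of \cite{RockafellarWets}. First I would fix $\lambda>0$ with $\lambda\rho<1$, where $\rho\ge 0$ is the modulus from Definition~\ref{def:prox-reg}(ii), and study the subproblem $\min_y\set{h(y)+\frac{1}{2\lambda}\norm{y-x}^2}$ for $x$ near $\bar x+\lambda\bar v$. The prox-regularity inequality shows that on the $\varepsilon$-ball around $\bar x$, restricted to points with $h$-value close to $h(\bar x)$, the function $h$ lies above each of its subgradient supports up to the concave error $\frac{\rho}{2}\norm{\cdot}^2$. Adding the penalty $\frac{1}{2\lambda}\norm{\cdot-x}^2$, whose modulus $\lambda^{-1}$ strictly exceeds $\rho$, renders the proximal objective locally strongly convex, which yields a unique local minimizer and hence single-valuedness of $\prox_{\lambda h}$ on a suitable neighborhood $U_\lambda$ of $\bar x+\lambda\bar v$.

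Next I would pin down the base point and quantify the stability of the solution map. Writing the first-order optimality condition at $x=\bar x+\lambda\bar v$, a point $\bar x$ solves the subproblem exactly when $0\in\partial h(\bar x)+\lambda^{-1}(\bar x-x)=\partial h(\bar x)-\bar v$, i.e. $\bar v\in\partial h(\bar x)$; this holds by hypothesis, so $\prox_{\lambda h}(\bar x+\lambda\bar v)=\bar x$. For the Lipschitz property I would take $x_1,x_2\in U_\lambda$ with images $p_i:=\prox_{\lambda h}(x_i)$, use the inclusions $\lambda^{-1}(x_i-p_i)\in\partial h(p_i)$, and invoke the hypomonotonicity of the localized subgradient mapping (equivalently, monotonicity of $\partial h+\rho I$ on the relevant localization). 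A short computation then gives $(1-\lambda\rho)\norm{p_1-p_2}^2\le \langle x_1-x_2,\,p_1-p_2\rangle$, whence $\norm{p_1-p_2}\le(1-\lambda\rho)^{-1}\norm{x_1-x_2}$ by Cauchy--Schwarz, establishing the Lipschitz continuity claimed in (i).

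Finally I would pass from the proximal mapping to the envelope. Once $\prox_{\lambda h}$ is single-valued and continuous near $\bar x+\lambda\bar v$, the standard envelope differentiation result yields that $e_\lambda h$ is differentiable on $U_\lambda$ with $\nabla e_\lambda h(x)=\lambda^{-1}(x-\prox_{\lambda h}(x))$, and in particular $\nabla e_\lambda h(\bar x+\lambda\bar v)=\lambda^{-1}(\bar x+\lambda\bar v-\bar x)=\bar v$, giving (ii) and \eqref{GradEnvelope}. Since this gradient is an affine expression in $\prox_{\lambda h}$, the Lipschitz bound established above transfers immediately to $\nabla e_\lambda h$, so $e_\lambda h$ is of class $\mathcal{C}^{1,1}$ on $U_\lambda$. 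The main obstacle I anticipate is the bookkeeping around the three coupled localizations---the $\varepsilon$-ball, the value restriction $h(x)<h(\bar x)+\varepsilon$, and the $\lambda$-dependent neighborhood $U_\lambda$---needed to guarantee that the local minimizer selected by the strongly convex surrogate is genuinely the point at which the global proximal infimum is attained, so that $\prox_{\lambda h}$ really coincides with this local solution map rather than with some distant competing minimizer.
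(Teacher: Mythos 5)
The paper gives no proof of this proposition at all: it is imported verbatim from \cite[Proposition~13.37]{RockafellarWets}, so the only thing to compare your sketch against is the proof in that source, which you follow faithfully. Your main computations are correct: the optimality condition $\bar v\in\partial h(\bar x)$ identifying the base point, the hypomonotonicity estimate $(1-\lambda\rho)\norm{p_1-p_2}^2\le\langle x_1-x_2,\,p_1-p_2\rangle$ yielding the Lipschitz modulus $(1-\lambda\rho)^{-1}$, and the transfer of Lipschitz continuity to $\nabla e_\lambda h=\lambda^{-1}(I-\prox_{\lambda h})$ via the envelope-differentiation formula.

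The one substantive gap is exactly the issue you defer to the final sentence, and it is the main step of the Rockafellar--Wets proof rather than bookkeeping. Prox-regularity (Definition~\ref{def:prox-reg}(ii)) constrains $h$ only on the attentive localization --- points of $\gph\partial h$ near $(\bar x,\bar v)$, compared against points $x\in\mathbb{B}_\varepsilon(\bar x)$ with $h(x)<h(\bar x)+\varepsilon$ --- whereas $\prox_{\lambda h}(x)$ is a \emph{global} argmin over $\R^n$. Your first paragraph uses only $\lambda\rho<1$, which is not sufficient; this is precisely where prox-boundedness and the ``$\lambda$ sufficiently small'' hypothesis must enter. Concretely, prox-boundedness supplies a threshold below which the proximal objective is coercive and $e_\lambda h>-\infty$, and one must then argue (after the standard tilt reduction to $\bar v=0$ via $h-\langle\bar v,\cdot\rangle$) that for $x$ in a small enough neighborhood of $\bar x+\lambda\bar v$ every minimizing sequence of $y\mapsto h(y)+\frac{1}{2\lambda}\norm{y-x}^2$ eventually enters $\mathbb{B}_\varepsilon(\bar x)$ with $h$-values approaching $h(\bar x)$. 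Until that is done, the identities $\prox_{\lambda h}(\bar x+\lambda\bar v)=\bar x$ and $\lambda^{-1}(x_i-p_i)\in\partial h(p_i)$ with $(p_i,\lambda^{-1}(x_i-p_i))$ lying in the localized graph are assumptions rather than conclusions, so both the single-valuedness and the Lipschitz estimate in (i) rest on an unproved premise. The remainder of your argument goes through once this localization step is supplied.
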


The preservation of local minimizers under the taking of Moreau envelopes, established in \cite[Theorem~3.2]{KhanhKhoaMordukhovichPhat}, plays a significant role in the subsequent convergence analysis of IPPm.

\begin{Proposition}\label{localMoreau}
Let $h:\R^n\rightarrow\overline{\R}$ be l.s.c.\ and prox-bounded on $\R^n$, and let $\bar x\in \R^n$. Then for any small $\lambda>0$, the following assertions are equivalent:
\begin{enumerate}[\bf (i)]
\item $\bar x$ is a local minimizer of $h$.

\item $\bar x$ is a local minimizer of $e_\lambda h$.
\end{enumerate}
\end{Proposition}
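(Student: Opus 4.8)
The plan is to prove both implications by reducing each to the single key fact that, for the relevant small $\lambda$, the point $\bar x$ is its own proximal point, i.e. $\bar x\in\prox_{\lambda h}(\bar x)$, and consequently $e_\lambda h(\bar x)=h(\bar x)$. Once this is in hand, both directions follow from the universal sandwich $e_\lambda h(x)\le h(x)$ valid for all $x$ (take $y=x$ in \eqref{defi of envelope}). Throughout I would use the standard consequences of prox-boundedness (see \cite{RockafellarWets}): for all $\lambda$ below the prox-threshold $\lambda_h$, the envelope $e_\lambda h$ is finite and continuous and the set $\prox_{\lambda h}(x)$ is nonempty for every $x$.

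For the implication (ii)$\Rightarrow$(i), which is the cleaner one, I would argue by interpolation along a segment. Suppose $\bar x$ is a local minimizer of $e_\lambda h$ and pick any $\bar y\in\prox_{\lambda h}(\bar x)$, so that $e_\lambda h(\bar x)=h(\bar y)+\frac{1}{2\lambda}\norm{\bar y-\bar x}^2$. For $t\in[0,1]$ set $x_t:=\bar x+t(\bar y-\bar x)$ and estimate, using $\bar y$ as a suboptimal candidate in \eqref{defi of envelope},
\begin{align*}
e_\lambda h(x_t)\le h(\bar y)+\frac{1}{2\lambda}\norm{\bar y-x_t}^2=e_\lambda h(\bar x)-\frac{t(2-t)}{2\lambda}\norm{\bar y-\bar x}^2.
\end{align*}
Since $x_t\to\bar x$ as $t\downarrow0$, local minimality of $\bar x$ for $e_\lambda h$ forces $\norm{\bar y-\bar x}=0$, hence $\prox_{\lambda h}(\bar x)=\{\bar x\}$ and $e_\lambda h(\bar x)=h(\bar x)$. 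Combining this with the sandwich and with $e_\lambda h(x)\ge e_\lambda h(\bar x)$ near $\bar x$ yields $h(x)\ge e_\lambda h(x)\ge e_\lambda h(\bar x)=h(\bar x)$ for all $x$ close to $\bar x$, so $\bar x$ is a local minimizer of $h$. Notably, this direction needs only $\lambda<\lambda_h$.

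For (i)$\Rightarrow$(ii) I would first record the localization of proximal points for small $\lambda$. Let $\delta>0$ be such that $h\ge h(\bar x)$ on $\B(\bar x,\delta)$ and fix some $\lambda_1\in(0,\lambda_h)$. For $\lambda<\lambda_1$ and any $y_x\in\prox_{\lambda h}(x)$, comparing the optimality bound $h(y_x)+\frac{1}{2\lambda}\norm{y_x-x}^2\le h(\bar x)+\frac{1}{2\lambda}\norm{\bar x-x}^2$ (candidate $\bar x$) with the lower bound $h(y_x)+\frac{1}{2\lambda_1}\norm{y_x-x}^2\ge e_{\lambda_1}h(x)$ gives
\begin{align*}
\Big(\tfrac{1}{2\lambda}-\tfrac{1}{2\lambda_1}\Big)\norm{y_x-x}^2\le h(\bar x)-e_{\lambda_1}h(x)+\tfrac{1}{2\lambda}\norm{\bar x-x}^2.
\end{align*}
Taking $x=\bar x$ and letting $\lambda\downarrow0$ shows $\prox_{\lambda h}(\bar x)\subset\B(\bar x,\delta)$, whence (since $h\ge h(\bar x)$ there and the penalty is nonnegative) $\prox_{\lambda h}(\bar x)=\{\bar x\}$ and $e_\lambda h(\bar x)=h(\bar x)$. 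Then, fixing such a small $\lambda$ and using the continuity of $e_{\lambda_1}h$, the same estimate shows that $y_x$ stays in $\B(\bar x,\delta)$ for all $x$ in a suitable neighborhood of $\bar x$; consequently $e_\lambda h(x)=h(y_x)+\frac{1}{2\lambda}\norm{y_x-x}^2\ge h(y_x)\ge h(\bar x)=e_\lambda h(\bar x)$ there, so $\bar x$ is a local minimizer of $e_\lambda h$.

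The main obstacle is precisely this localization step in (i)$\Rightarrow$(ii): unlike the global-minimizer case, a local minimizer of $h$ need not satisfy $e_\lambda h(\bar x)=h(\bar x)$ for every $\lambda$, since the infimum in \eqref{defi of envelope} could be attained at a far-away point where $h$ dips lower. Controlling this requires the quantitative two-sided estimate above together with the correct order of quantifiers — first choosing $\lambda$ small (so that the coefficient $\frac{1}{2\lambda}-\frac{1}{2\lambda_1}$ dominates the fixed gap $h(\bar x)-e_{\lambda_1}h(\bar x)$) and only afterwards shrinking the neighborhood of $\bar x$. A minor technical point to track is that, for merely l.s.c.\ prox-bounded $h$, the mapping $\prox_{\lambda h}$ may be set-valued, so each argument should be run for an arbitrary selection $y_x\in\prox_{\lambda h}(x)$, which is exactly how the estimates above are phrased.
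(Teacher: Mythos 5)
The paper does not actually prove this proposition; it imports it verbatim from the cited reference (Theorem~3.2 of the Khanh--Khoa--Mordukhovich--Phat paper), so there is no in-paper argument to compare against. Your blind proof is, as far as I can check, correct and complete on its own terms. The direction (ii)$\Rightarrow$(i) via the segment $x_t=\bar x+t(\bar y-\bar x)$ and the identity $1-(1-t)^2=t(2-t)$ is clean and indeed works for every $\lambda$ below the prox-threshold. The direction (i)$\Rightarrow$(ii) correctly isolates the real difficulty --- that a merely local minimizer gives no a priori control on where the infimum in \eqref{defi of envelope} is attained --- and resolves it with the two-parameter estimate
\begin{align*}
\Bigl(\tfrac{1}{2\lambda}-\tfrac{1}{2\lambda_1}\Bigr)\norm{y_x-x}^2\le h(\bar x)-e_{\lambda_1}h(x)+\tfrac{1}{2\lambda}\norm{\bar x-x}^2,
\end{align*}
whose right-hand side is finite and locally bounded thanks to prox-boundedness and the continuity of $e_{\lambda_1}h$ for $\lambda_1$ below the threshold. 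Your explicit remark on the order of quantifiers (shrink $\lambda$ first so that the coefficient dominates the fixed gap $h(\bar x)-e_{\lambda_1}h(\bar x)$, then shrink the neighborhood of $\bar x$) is exactly the point where a careless argument would fail, and you handle the possible set-valuedness of $\prox_{\lambda h}$ correctly by running the estimates for an arbitrary selection. The only implicit hypotheses you rely on --- properness of $h$ and $\bar x\in\dom h$ --- are the standard conventions under which the statement is meaningful, so nothing is missing.
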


The next proposition shows that, for both classes of weakly convex and prox-regular functions $h:\mathbb{R}^n \rightarrow \overline{\mathbb{R}}$, Algorithm~\ref{IPP algorithm} applied to $h$ reduces to Algorithm~\ref{IGDm} applied to its Moreau envelope. 

\begin{Proposition}\label{PPM problem transformation} Let  $h: \mathbb{R}^n \rightarrow \overline{\mathbb{R}}$ be a proper extended-real-valued function. We have the assertions:

\begin{enumerate}[\bf (i)]
\item Assume that $h$ is  $\varrho$-weakly convex function, that $\lambda \in (0,\rho^{-1})$, and that $f: = e_\lambda h$. Then Algorithm~{\rm\ref{IPP algorithm}} applied to $h$ with such $\lambda$ agrees with Algorithm~{\rm\ref{IGDm}} applied to $f=e_\lambda h$ with $X = \mathbb{R}^n$.

\item Assume that $h$ is prox-bounded on $\R^n$ and prox-regular at some local minimizer $\bar x$ for $\bar v=0$. Then there exists $\bar \lambda>0$ such that for all $\lambda\in(0,\bar\lambda]$, we can select $r_\lambda>0$ so that Algorithm~{\rm\ref{IPP algorithm}} applied to $h$ reduces to Algorithm~{\rm\ref{IGDm}} for $f:=e_\lambda h$ on $X=\B(\bar x,r_\lambda)$.
\end{enumerate}
\end{Proposition}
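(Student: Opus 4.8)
The plan is to exploit the fact that the two parts of IPPm carrying the momentum, namely the inertial update $x^k_\rin = x^k+\beta_k(x^k-x^{k-1})$ and the extrapolation update $x^k_\rex = x^k+\gamma_k(x^k-x^{k-1})$, together with the final step $x^{k+1}=x^k_\rin-\tau g^k$, are \emph{literally identical} to the corresponding formulas in IGDm. Consequently, the only thing that must be checked is that the search direction $g^k:=\lambda^{-1}(x^k_\rex-p^k)$ produced by the inexact proximal subproblem qualifies as an admissible inexact gradient of $f:=e_\lambda h$ at $x^k_\rex$ in the sense of the relative error condition $\norm{g^k-\nabla f(x^k_\rex)}\le\nu\norm{g^k}$ required in \eqref{iteration update}. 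The whole argument therefore reduces to the gradient identity for the Moreau envelope plus one elementary computation.

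For assertion (i), I would first invoke Proposition~\ref{weakly and descent}(ii): for a $\varrho$-weakly convex $h$ and $\lambda\in(0,\varrho^{-1})$, the envelope $f=e_\lambda h$ is of class $\C^{1,1}_L$ on all of $\R^n$ with $L=\max\set{\lambda^{-1},\frac{\varrho}{1-\lambda\varrho}}$ and $\nabla f(x)=\lambda^{-1}(x-\prox_{\lambda h}(x))$. Substituting this formula, one gets $g^k-\nabla f(x^k_\rex)=\lambda^{-1}(\prox_{\lambda h}(x^k_\rex)-p^k)$ and $\norm{g^k}=\lambda^{-1}\norm{p^k-x^k_\rex}$, so the IPPm tolerance $\norm{p^k-\prox_{\lambda h}(x^k_\rex)}\le\nu\norm{p^k-x^k_\rex}$ from \eqref{PPM selection gk} transforms, after cancelling the common factor $\lambda^{-1}$, into exactly $\norm{g^k-\nabla f(x^k_\rex)}\le\nu\norm{g^k}$. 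Since everything is global and $X=\R^n$, this identifies IPPm applied to $h$ with IGDm applied to $f=e_\lambda h$.

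For assertion (ii) I would run the same computation, the difference being that the envelope is now only \emph{locally} smooth. Applying Proposition~\ref{C11} with $\bar v=0$, so that $\bar x+\lambda\bar v=\bar x$, yields $\bar\lambda>0$ such that for each $\lambda\in(0,\bar\lambda]$ there is a neighborhood $U_\lambda$ of $\bar x$ on which $\prox_{\lambda h}$ is single-valued and Lipschitz, $e_\lambda h$ is $\C^{1,1}$, and the same gradient formula \eqref{GradEnvelope} holds. Choosing $r_\lambda>0$ with $\B(\bar x,r_\lambda)\subset U_\lambda$ and setting $X=\B(\bar x,r_\lambda)$, the identity of the preceding paragraph is valid verbatim for every point lying in $U_\lambda$, so the proximal direction again satisfies the relative error bound and $f=e_\lambda h\in\C^{1,1}$ on $X$; finally Proposition~\ref{localMoreau} guarantees that $\bar x$, a local minimizer of $h$, is also a local minimizer of $e_\lambda h$, as needed for the local version of IGDm.

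The main point requiring care is this localization in (ii): the gradient formula, and hence the identification, is only valid while $x^k_\rex$ (and $x^k_\rin$) remain inside $U_\lambda$. I would address this by taking $\B(\bar x,r_\lambda)\subset U_\lambda$ and interpreting the reduction as the agreement of the two updates whenever the relevant iterates lie in $X=\B(\bar x,r_\lambda)$ — precisely the regime in which the local convergence analysis of Theorem~\ref{local con IGDm} subsequently keeps the iterates trapped. Assertion (i), by contrast, is essentially a one-line computation once Proposition~\ref{weakly and descent}(ii) supplies the global gradient formula.
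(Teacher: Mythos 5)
Your proposal is correct and follows essentially the same route as the paper: invoke Proposition~\ref{weakly and descent}(ii) (globally) or Proposition~\ref{C11} (locally, with $\bar v=0$) to obtain the gradient formula $\nabla e_\lambda h(x)=\lambda^{-1}(x-\prox_{\lambda h}(x))$, and then the one-line computation $\norm{g^k-\nabla f(x^k_\rex)}=\lambda^{-1}\norm{p^k-\prox_{\lambda h}(x^k_\rex)}\le\nu\lambda^{-1}\norm{p^k-x^k_\rex}=\nu\norm{g^k}$ identifies the two algorithms. Your extra remarks on the localization in (ii) and on Proposition~\ref{localMoreau} are sensible but not needed for the statement itself; the paper defers the latter to the proof of Theorem~\ref{lo IPPm}.
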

\begin{proof}
To verify (i), deduce from Proposition~\ref{weakly and descent} that $f$ is $\C^{1,1}_L$ on $\R^n$ with $L=\max\set{\lambda^{-1},\frac{\varrho}{1-\lambda\varrho}}$. It follows from \eqref{prop 2.8 i} and the selection of $g^k$ in \eqref{PPM selection gk} that 
\begin{align}\label{esti weakly convex}
\begin{array}{ll}
&\norm{g^k-\nabla f(x^k_\rex)}=\norm{\lambda^{-1}(x^k_\rex-p^k)-\lambda^{-1}(x^k_\rex-\prox_{\lambda h}(x^k_\rex))}\\
&=\lambda^{-1}\norm{p^k-\prox_{\lambda h}(x^k_\rex)}\le \nu \lambda^{-1}\norm{p^k-x^k_\rex}=\nu \norm{g^k},
\end{array}
\end{align}
which clearly justifies assertion (i).

To verify (ii), we use Proposition~\ref{C11} and find $\bar \lambda>0$ such that for all $\lambda\in (0,\bar \lambda]$, there exists $r_\lambda>0$ ensuring that $f=e_\lambda h$ is of class $\C^{1,1}_L$ on $\B(\bar x,r_\lambda)$ with some $L>0$. Fix $\lambda$ as above and utilize \eqref{GradEnvelope} together with the update 
$g^k=\lambda^{-1}(x^k_\rex-p^k)$ to get \eqref{esti weakly convex}, which therefore completes the proof. 
\end{proof}

Now we are ready to establish global and local convergence results for Algorithm~\ref{IPP algorithm}. Similarly to Subsection~\ref{subsect convergence nanalys}, let us impose the natural requirement in what follows:

\begin{Assumption}\rm \label{assum IPPM}
$0\notin \partial h(x^k_\rex)$ for all $k\in\N.$
\end{Assumption}

\begin{Theorem}\label{theo IPPM global}
Let $h:\R^n\rightarrow\overline\R$ be a proper, l.s.c., and $\rho$-weakly convex function bounded from below, and let $\set{x^k}$ be the sequence of iterates generated by Algorithm~{\rm\ref{IPP algorithm}} with 
\begin{align}\label{lambda}
\lambda\in(0,\varrho^{-1})\;\mbox{ and }\;\max\set{L\tau,2L\tau \bar \delta+(L\tau+1)\bar\beta^2}<1-\nu,\;
\text{ where }\;L=\max\set{\lambda^{-1},\frac{\varrho}{1-\lambda \varrho}}.
\end{align}
Then the following assertions hold:
\begin{enumerate}[\bf(i)]
\item Every accumulation point $\bar x$ of $\set{x^k}$ is a stationary point of the function $h$.

\item If the Moreau envelope $e_\lambda h$ satisfies the basic PLK condition at an accumulation point $\bar x$ of $\set{x^k}$, then the sequence of iterates $\set{x^k}$ converges to $\bar x$ as $k\to\infty$.
 
\item If $e_\lambda h$ satisfies the exponential PLK condition with $\psi(t)=Mt^{1-q}$ for some $M>0$ and $q\in[0,1)$, then the convergence rates of $\set{x^k}$ and $\set{x^k-\prox_{\lambda h}(x^k)}$ are as follows:
\begin{itemize}

\item For $q\in[0,1/2]$, the sequences $\set{x^k}$ and $ \set{x^k-\prox_{\lambda h}(x^k)}$ converge linearly as $k\to\infty$ to $\bar x$ and $0$, respectively.

\item For $q\in(1/2,1)$, we have the convergence rates
$$
\norm{x^k-\bar x}=\mathcal{O}\brac{k^{-\frac{1-q}{2q-1}}}\;\mbox{ and }\;\norm{x^k-\prox_{\lambda h}(x^k)}=\mathcal{O}\brac{k^{-\frac{1-q}{2q-1}}}\;\mbox{ as }\;k\to\infty.
$$
\end{itemize}
\end{enumerate}\end{Theorem}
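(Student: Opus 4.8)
The plan is to \textbf{reduce the entire analysis to the already-proven global theorem for IGDm} by passing to the Moreau envelope. Since $h$ is $\varrho$-weakly convex and $\lambda\in(0,\varrho^{-1})$, Proposition~\ref{PPM problem transformation}(i) shows that the sequence $\{x^k\}$ generated by Algorithm~\ref{IPP algorithm} for $h$ coincides with the sequence generated by Algorithm~\ref{IGDm} applied to the smooth function $f:=e_\lambda h$ on $X=\R^n$. By Proposition~\ref{weakly and descent}(ii) this envelope is of class $\C^{1,1}_L$ with $L=\max\{\lambda^{-1},\frac{\varrho}{1-\lambda\varrho}\}$, exactly the constant appearing in \eqref{lambda}, so the stepsize restriction $\max\{L\tau,2L\tau\bar\delta+(L\tau+1)\bar\beta^2\}<1-\nu$ is precisely hypothesis \eqref{glob-assum} of Theorem~\ref{global IGDm} for $f$.

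Next I would verify the only remaining hypothesis of Theorem~\ref{global IGDm}, namely $\inf_{k}f(x^k)>-\infty$. This follows at once from Proposition~\ref{weakly and descent}(iv), which gives $\inf_x e_\lambda h(x)=\inf_x h(x)>-\infty$ since $h$ is bounded below; hence $f=e_\lambda h$ is bounded from below and in particular $\inf_k f(x^k)>-\infty$. With all hypotheses in force, Theorem~\ref{global IGDm} applies verbatim to $f$ and $\{x^k\}$.

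It then remains to \textbf{translate the three conclusions about $f=e_\lambda h$ back into statements about $h$}. For (i), Theorem~\ref{global IGDm}(i) yields $\nabla f(x^k)\to 0$; if $\bar x$ is an accumulation point, then continuity of $\nabla f$ gives $\nabla f(\bar x)=0$, so $\bar x$ is a stationary point of $e_\lambda h$, and Proposition~\ref{weakly and descent}(iii) upgrades this to stationarity of $h$. For (ii), a PLK condition on $e_\lambda h$ at $\bar x$ is exactly the hypothesis of Theorem~\ref{global IGDm}(ii), which forces convergence of the whole sequence $\{x^k\}$ to $\bar x$. For (iii), the rates for $\{x^k\}$ are immediate from Theorem~\ref{global IGDm}(iii); the rate for $\{x^k-\prox_{\lambda h}(x^k)\}$ follows from the gradient formula $\nabla e_\lambda h(x)=\lambda^{-1}(x-\prox_{\lambda h}(x))$ in \eqref{prop 2.8 i}, which yields the identity $x^k-\prox_{\lambda h}(x^k)=\lambda\nabla f(x^k)$, so its rate is exactly $\lambda$ times the rate of $\{\nabla f(x^k)\}$ supplied by Theorem~\ref{global IGDm}(iii).

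Since nearly all the technical work has already been discharged in the preliminary propositions and in Theorem~\ref{global IGDm}, there is no genuinely hard step; the only points requiring care are the bookkeeping that the constant $L$ and the stepsize condition match exactly, the use of Proposition~\ref{weakly and descent}(iv) to secure lower-boundedness of the envelope, and the correct identification of the residual $x^k-\prox_{\lambda h}(x^k)$ with $\lambda\nabla e_\lambda h(x^k)$ so that the gradient rate transfers to the proximal residual.
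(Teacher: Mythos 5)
Your proposal is correct and follows essentially the same route as the paper: reduce to Theorem~\ref{global IGDm} for $f=e_\lambda h$ via Propositions~\ref{weakly and descent} and \ref{PPM problem transformation}(i), secure $\inf_k f(x^k)>-\infty$ from Proposition~\ref{weakly and descent}(iv) and the lower-boundedness of $h$, and translate the conclusions back through Proposition~\ref{weakly and descent}(iii) and the identity $x^k-\prox_{\lambda h}(x^k)=\lambda\nabla e_\lambda h(x^k)$. The one step you skip is verifying that the standing Assumption~\ref{assu} of the IGDm convergence analysis, namely $\nabla f(x^k_{\rm ex})\ne 0$ for all $k$, holds for $f=e_\lambda h$: the paper derives it from Assumption~\ref{assum IPPM} by noting that $\nabla e_\lambda h(x^k_{\rm ex})=0$ would force $x^k_{\rm ex}=\prox_{\lambda h}(x^k_{\rm ex})$ via \eqref{prop 2.8 i} and hence $0\in\partial h(x^k_{\rm ex})$ by the Fermat rule applied to \eqref{Moreau}, a contradiction. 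This check is routine, but it is a hypothesis under which Theorem~\ref{global IGDm} was proved, so it should be made explicit.
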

\begin{proof}
Select $\lambda$ and $L$ as in \eqref{lambda}. By Proposition~\ref{weakly and descent}, the Moreau envelope $f=e_\lambda h$ has a Lipschitz continuous gradient with constant $L$. Proposition~\ref{PPM problem transformation}(i) tells us that Algorithm~\ref{IPP algorithm} reduces to Algorithm~\ref{IGDm} for $f$. By Proposition~\ref{weakly and descent}(iv) and the boundedness from below of $f$, we get the condition $\inf_{k\in\N}f(x^k)>-\infty$. Let us further verify Assumption~\ref{assu} for $f$. Pick any $k\in\N$ and suppose that $\nabla f(x^k_\rex)=0$, which gives us by \eqref{prop 2.8 i} that $x^k_\rex=\prox_{\lambda h}(x^k_\rex)$. Using the subdifferential Fermat rule and the construction in  \eqref{Moreau} ensures that $0\in \partial h(x^k_\rex),$ which clearly violates Assumption~\ref{assum IPPM}. Now we can apply Theorem~\ref{global IGDm}(i) and conclude that every accumulation point of $\set{x^k}$ is a stationary point of $f=e_\lambda h$, which is an accumulation point of $h$ by Proposition~\ref{weakly and descent}(iii). Using Theorem~\ref{global IGDm}(ii) with taking into account that $e_\lambda h$ satisfies the PLK property at some accumulation point of $\set{x^k}$ yields the convergence $x^k\rightarrow \bar x$ as $k\rightarrow\infty.$ The convergence rates in (iii) follow directly from Theorem~\ref{global IGDm}(iii) and representation \eqref{prop 2.8 i}.
\end{proof}

The next result addresses the local convergence of Algorithm~\ref{IPP algorithm} to a local minimizer of problem \eqref{PPM problem}.

\begin{Theorem}\label{lo IPPm}
Let $h:\R^n\rightarrow\overline{\R}$ be a proper, l.s.c., prox-bounded, and semialgebraic function, which is prox-regular at some local minimizer $\bar x$ for $\bar v=0$. Then there is $\bar \lambda>0$ such that for any $\lambda\in (0,\bar \lambda]$, we can select $r_\lambda>0,\;\xi>0,\;T>0$ so that for $X=\B(\bar x,r_\lambda)$, $x^0=x^1\in \mathbb{B}(\bar x,\xi)$, $\bar \beta\in [0,1-\nu)$, and $\tau\in (0,T]$ in  Algorithm~{\rm\ref{IPP algorithm}}, the following local convergence properties hold: 
\begin{enumerate}[\bf(i)]
\item $\set{x^k},\set{x^k_{\rm in}},\set{x^k_{\rm ex}}$ converge to some local minimizer $\tilde x$ of $h$ with $h(\tilde x)=h(\bar x).$ 

\item If $e_\lambda h$ satisfies the exponential PLK condition at $\bar x$ with the desingularizing function $\psi(t)=Mt^{1-q}$ as $M>0$ and $q\in[0,1),$ then the local convergence rates are: 
\begin{itemize}
\item For $q\in[0,1/2]$, the sequence $\set{x^k}, \set{x^k-\prox_{\lambda h}(x^k)}$ converge linearly to $\tilde x$, $0$ respectively.

\item For $q\in(1/2,1)$, we have  the asymptotic estimates
$$
\norm{x^k-\tilde x}=\mathcal{O}\brac{k^{-\frac{1-q}{2q-1}}}\;\mbox{ and }\;\norm{x^k-\prox_{\lambda h}(x^k)}=\mathcal{O}\brac{k^{-\frac{1-q}{2q-1}}}\;\mbox{ as }\;k\to\infty
$$
\end{itemize}
\end{enumerate}
\end{Theorem}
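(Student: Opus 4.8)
The plan is to reduce the study of IPPm applied to $h$ to that of IGDm applied to the Moreau envelope $f:=e_\lambda h$, and then to invoke Theorem~\ref{local con IGDm}. First I would use Proposition~\ref{PPM problem transformation}(ii) to fix $\bar\lambda>0$ (shrunk if necessary so that Propositions~\ref{C11} and \ref{localMoreau} also apply) such that for each $\lambda\in(0,\bar\lambda]$ there is $r_\lambda>0$ for which Algorithm~\ref{IPP algorithm} run on $h$ coincides with Algorithm~\ref{IGDm} run on $f=e_\lambda h$ over $X=\B(\bar x,r_\lambda)$, with $f$ of class $\C^{1,1}_L$ there by Proposition~\ref{C11}(ii). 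The momentum hypothesis $\bar\beta\in[0,1-\nu)$ implies $\bar\beta\in[0,\sqrt{1-\nu})$, since $1-\nu<\sqrt{1-\nu}$ for $\nu\in(0,1)$, so the restriction required by Theorem~\ref{local con IGDm} is met.

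Next I would verify the three ingredients Theorem~\ref{local con IGDm} requires of $f=e_\lambda h$ at $\bar x$. The point $\bar x$ is a local minimizer of $f$ by Proposition~\ref{localMoreau}, since it is one of $h$. To obtain the basic PLK condition for $f$ at $\bar x$ I would invoke semialgebraicity of $h$: the Moreau envelope of a semialgebraic function is again semialgebraic (parametric infima preserve semialgebraicity via Tarski--Seidenberg), hence definable in an o-minimal structure, so by Remark~\ref{algebraic} it satisfies the PLK condition at every point of $\dom\partial f$, in particular at $\bar x$. Finally, Assumption~\ref{assu} for $f$ follows from Assumption~\ref{assum IPPM}: if $\nabla e_\lambda h(x^k_\rex)=0$, then \eqref{GradEnvelope} forces $x^k_\rex=\prox_{\lambda h}(x^k_\rex)$, and the subdifferential Fermat rule applied to the minimization in \eqref{Moreau} yields $0\in\partial h(x^k_\rex)$, a contradiction. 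With these in hand, Theorem~\ref{local con IGDm}(i) furnishes $\xi>0$ and $T>0$ so that for $x^0=x^1\in\B(\bar x,\xi)$ and $\tau\in(0,T]$ the sequences $\set{x^k},\set{x^k_\rin},\set{x^k_\rex}$ remain in $\B(\bar x,r_\lambda)$ and converge to some local minimizer $\tilde x$ of $f$ with $f(\tilde x)=f(\bar x)$.

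It then remains to transfer the conclusion back to $h$. By Proposition~\ref{localMoreau}, $\tilde x$ is a local minimizer of $h$. Being a local minimizer of $f=e_\lambda h$, it is stationary, so \eqref{GradEnvelope} gives $\prox_{\lambda h}(\tilde x)=\tilde x$ and hence $e_\lambda h(\tilde x)=h(\tilde x)$; likewise $\prox_{\lambda h}(\bar x)=\bar x$ because $\bar v=0$ (Proposition~\ref{C11}(i)), whence $e_\lambda h(\bar x)=h(\bar x)$. Combined with $f(\tilde x)=f(\bar x)$, this yields $h(\tilde x)=h(\bar x)$ and proves (i). For (ii), I would feed the assumed exponential PLK property of $e_\lambda h$ into Theorem~\ref{local con IGDm}(ii): the stated rates for $\set{x^k}$ transfer verbatim, and since $x^k-\prox_{\lambda h}(x^k)=\lambda\nabla e_\lambda h(x^k)=\lambda\nabla f(x^k)$ on $\B(\bar x,r_\lambda)$ by \eqref{GradEnvelope}, the rate for $\set{\nabla f(x^k)}$ provided by Theorem~\ref{local con IGDm}(ii) gives the claimed rate for $\set{x^k-\prox_{\lambda h}(x^k)}$ in both the linear regime $q\in[0,1/2]$ and the sublinear regime $q\in(1/2,1)$.

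The step I expect to be the main obstacle is the justification that $f=e_\lambda h$ inherits the basic PLK condition at $\bar x$: this is the only place where the semialgebraicity hypothesis genuinely enters, and it rests on the preservation of semialgebraicity (definability) under the Moreau envelope together with the o-minimal Kurdyka--\L ojasiewicz theorem recalled in Remark~\ref{algebraic}. A secondary subtlety is the back-translation of function values, where one must use that both $\bar x$ and the limit $\tilde x$ are fixed points of $\prox_{\lambda h}$ in order to pass the equality $f(\tilde x)=f(\bar x)$ from the envelope to $h$ itself.
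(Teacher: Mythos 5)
Your proposal is correct and follows essentially the same route as the paper: reduce IPPm on $h$ to IGDm on $f=e_\lambda h$ via Proposition~\ref{PPM problem transformation}(ii), obtain the basic PLK property of $f$ at $\bar x$ from semialgebraicity of the Moreau envelope, apply Theorem~\ref{local con IGDm}, and transfer the conclusions back through the fixed-point identity $\prox_{\lambda h}(\tilde x)=\tilde x$ and the gradient representation \eqref{GradEnvelope}. The only cosmetic difference is at the end: rather than invoking Proposition~\ref{localMoreau} at the a priori unknown limit $\tilde x$ (where the admissible range of $\lambda$ could in principle depend on the point), the paper deduces local minimality of $\tilde x$ for $h$ directly from $h(\tilde x)=h(\bar x)$ together with the fact that $\bar x$ minimizes $h$ on $\B(\bar x,r_\lambda)\ni\tilde x$ --- a step your own verification of $h(\tilde x)=h(\bar x)$ already supplies.
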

\begin{proof}
Since  $\bar x$ is a local minimizer of $h,$ it follows from Proposition~\ref{localMoreau} that there exists $\bar \lambda>0$  such that $\bar x$ is a local minimizer of $e_\lambda h$ for all $\lambda\in (0,\bar \lambda].$ By choosing a smaller $\bar \lambda$ if necessary, Proposition~\ref{PPM problem transformation} tells us that for any $\lambda\in (0,\bar\lambda],$ we can find $r_\lambda>0$ for which $e_\lambda h$ is $\C^1$-smooth on $\B(\bar x,r_\lambda)$ and Algorithm~\ref{IPP algorithm} applied to $h$ reduces to Algorithm~\ref{IGDm} applied to $e_\lambda h$ with $X=\B(\bar x,r_\lambda).$ Fix such $\lambda$ and the corresponding $r_\lambda.$ By choosing a smaller $r_\lambda$, we get that $\bar x$ is a local minimizer of both $f$ and $h$ with respect to $\B(\bar x,r_\lambda).$ Since $h$ is a l.s.c.\ and 
semialgebraic function, its Moreau envelope $f=e_\lambda h$ is also semialgebraic; see, e.g., \cite[Page~1898]{Ioffe2009}. This verifies the basic PLK condition for $e_\lambda h$ at $\bar x$. 

Thus it follows from Theorem~\ref{local con IGDm} that there exist $\xi \in(0,r_\lambda]$ and $T>0$ such that for any $x^0=x^1\in\B(\bar x,\xi)$ and $\tau\in (0,T],$ we have the convergence properties of Theorem~\ref{local con IGDm}. This means that the sequences $\set{x^k},\set{x^k_\rin}$, and $\set{x^k_\rex}$ stay in $\mathbb{B}(\bar x,r_\lambda)$ and converge to a local minimizer $\tilde x\in \mathbb{B}(\bar x,r_\lambda)$ of $f$ with $f(\tilde x)=f(\bar x).$ By the classical Fermat rule, we have that $\nabla f(\tilde x)=0$, which implies by \eqref{GradEnvelope} that $\tilde x=\prox_{\lambda h}(\tilde x).$ Combining the latter with $f(\tilde x)=f(\bar x)$ yields
\begin{align*}
h(\bar x)&\ge f(\bar x)=f(\tilde x)=\inf_{x\in\R^n}\set{h(x)+\frac{1}{2\lambda}\norm{x-\tilde x}^2}\\
&=h(\prox_{\lambda h}(\tilde x))+\frac{1}{2\lambda}\norm{\prox_{\lambda h}(\tilde x)-\tilde x}^2=h(\tilde x).
\end{align*}
Since $\bar x$ minimizes $h$ relative to $\mathbb{B}(\bar x,r_\lambda)\ni \tilde x$, we get that $h(\tilde x)= h(\bar x),$ and hence $\tilde x$ is a local minimizer of $h$, which verifies (i). The convergence rates in (ii) follow from the convergence rates in Theorem~\ref{local con IGDm}(ii) with taking into account the gradient representation in \eqref{GradEnvelope}.
\end{proof}\vspace*{-0.2in}

\section{Numerical Experiments}\label{sec: numerical}

In this section, we present and discuss numerical experiments to confirm the efficiency of IGDm in Algorithm~\ref{IGDm} to solve problem \eqref{main problem}, where $f:\R^n\rightarrow\R$ is a function of class $\C^{1,1}_L$. The momentum settings considered below are as follows:\vspace*{0.05in}

{\bf(i)} Polyak's heavy ball method \cite{Polyak1964,ZavrievKostyuk1993}: $\beta_k=\frac{k}{k+3}$ and $\gamma_k=0$ for all $k\in\N.$

{\bf(ii)} Nesterov's acceleration for convex functions \cite{Nesterov1983,ShiDuJordanSu2022}: $\beta_k=\gamma_k=\frac{k}{k+3}$ for all $k\in\N.$\vspace*{0.05in}

The numerical experiments aim to demonstrate that IGDm, in addition to its universality that covers many well-known and efficient first-order algorithms, also has valuable applications in problems of derivative-free optimization. In this context, since the exact gradient is unavailable, we utilize inexact gradients through approximation methods. A comprehensive convergence analysis of such methods under the global and local Lipschitz continuity of the gradient is presented in the recent paper \cite{KhanhMordukhovichTrana}. In this section, we consider the standard finite-difference methods with: \vspace*{0.05in}

$\bullet$ {\em Forward finite difference}: 
\begin{align}\label{forward}
\mathcal{G}(x,\delta):=\dfrac{1}{\delta}\sum_{i=1}^n\brac{f(x+\delta e_i)-f(x)}e_i\;\text{ for any }\;(x,\delta)\in\R^n\times (0,\infty).
\end{align}

$\bullet$ {\em Central finite difference}: 
\begin{align}\label{central}
\mathcal{G}(x,\delta):=\dfrac{1}{2\delta}\sum_{i=1}^n\brac{f(x+\delta e_i)-f(x-\delta e_i)}e_i\;\text{ for any }\;(x,\delta)\in\R^n\times (0,\infty).
\end{align}
The following error bounds are known for both approximation schemes given above \cite{KhanhMordukhovichTrana}.
\begin{Proposition}\label{for err bound}
The forward  \eqref{forward} and central \eqref{central} finite differences satisfy the estimate
\begin{align}\label{glo err bound for}
\norm{\mathcal{G}(x,\delta) -\nabla f(x)}\le \dfrac{L\sqrt{n}\delta}{2}\; \text{ for any }\;(x,\delta)\in\R^n\times(0,\infty).
\end{align}
\end{Proposition}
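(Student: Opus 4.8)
The plan is to reduce the vector estimate \eqref{glo err bound for} to a one-dimensional estimate in each coordinate direction and then aggregate via the Euclidean norm. The engine for the coordinatewise bound is the descent-type inequality of Lemma~\ref{lemma descent}, applied along the segments joining $x$ to the perturbed points $x\pm\delta e_i$, which lie in $\R^n$ where $\nabla f$ is globally $L$-Lipschitz, so that the hypotheses of that lemma are met for every $i$ and every $(x,\delta)$.

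First I would treat the forward difference \eqref{forward}. For each $i$, the $i$th component of $\mathcal{G}(x,\delta)$ equals $\delta^{-1}\big(f(x+\delta e_i)-f(x)\big)$, while $\dotproduct{\nabla f(x),\delta e_i}=\delta[\nabla f(x)]_i$. Applying Lemma~\ref{lemma descent} with $y=x+\delta e_i$ and using $\norm{\delta e_i}=\delta$ gives
\[
\abs{f(x+\delta e_i)-f(x)-\dotproduct{\nabla f(x),\delta e_i}}\le\frac{L}{2}\delta^2,
\]
and dividing by $\delta$ yields the per-coordinate bound $\abs{[\mathcal{G}(x,\delta)]_i-[\nabla f(x)]_i}\le\frac{L\delta}{2}$. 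For the central difference \eqref{central} I would instead apply Lemma~\ref{lemma descent} twice per coordinate, once with $y=x+\delta e_i$ and once with $y=x-\delta e_i$, obtaining two quantities each bounded by $\frac{L}{2}\delta^2$; subtracting them cancels the common $f(x)$ terms and reinforces the gradient term, so the triangle inequality delivers
\[
\abs{f(x+\delta e_i)-f(x-\delta e_i)-2\delta[\nabla f(x)]_i}\le L\delta^2.
\]
Dividing by $2\delta$ produces the \emph{same} per-coordinate bound $\frac{L\delta}{2}$ as in the forward case. In both situations I would finish by summing the squared componentwise errors:
\[
\norm{\mathcal{G}(x,\delta)-\nabla f(x)}=\sqrt{\sum_{i=1}^n\abs{[\mathcal{G}(x,\delta)]_i-[\nabla f(x)]_i}^2}\le\sqrt{n\cdot\tfrac{L^2\delta^2}{4}}=\frac{L\sqrt{n}\,\delta}{2}.
\]

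There is no genuine obstacle here, as the argument is a routine application of Lemma~\ref{lemma descent}. The only point requiring care is the sign bookkeeping in the central-difference case: the gradient term enters with opposite signs at $x+\delta e_i$ and $x-\delta e_i$, so the subtraction doubles it rather than cancels it. It is also worth noting that the crude triangle-inequality estimate already suffices, since under the mere $\mathcal{C}^{1,1}_L$ hypothesis one cannot expect the classical $\mathcal{O}(\delta^2)$ accuracy of central differences; the linear-in-$\delta$ bound with the uniform constant $L\sqrt{n}/2$ is precisely what feeds into the relative inexact condition governing the algorithms.
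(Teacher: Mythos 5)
Your argument is correct, and the sign bookkeeping in the central-difference case is handled properly: applying Lemma~\ref{lemma descent} at $x+\delta e_i$ and at $x-\delta e_i$ and subtracting doubles the gradient term while the $f(x)$ terms cancel, giving the same per-coordinate bound $L\delta/2$ as in the forward case, after which the $\ell_2$-aggregation produces the factor $\sqrt{n}$. Note that the paper itself does not prove this proposition but cites it to an external reference; your proof is the standard one-dimensional reduction that any such reference would use, so there is nothing to compare beyond confirming its correctness.
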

Combining finite-difference approximations and inexact gradient descent methods, with or without momentum, brings us to the resulting devices:
\begin{itemize}
\item DF-fordif: Derivative-free method with forward differences.
\item DF-cendif: Derivative-free method with central differences.
\item DF-fordif: Derivative-free method with Nesterov's momentum and forward differences.
\item DFn-cendif: Derivative-free method with Nesterov's momentum and central differences.
\item DFp-fordif: Derivative-free method with Polyak's momentum and forward differences.
\item DFp-cendif: Derivative-free method with Polyak's momentum and central differences.
\end{itemize}
For each of these devices,  we proceed as follows. At the $k^{\rm th}$ iteration of Algorithm~\ref{IGDm}, select a reduction factor $\theta\in (0,1)$ and some accuracy $\varepsilon_k>0,$  and then generate an approximation $g^k= \mathcal{G}(x^k,\theta^{i_k}\varepsilon_k)$ with the smallest integer $i_k$ satisfying the estimate
\begin{align}\label{condition deri}
\norm{g^k-\nabla f(x^k_\rex)}\le \nu \norm{g^k}.
\end{align}
The existence and an upper bound for $i_k$ are provided in \cite[Remark~3.1]{KhanhMordukhovichTran}. In our numerical experiments, we consider functions $f$ in the following forms:
\begin{enumerate}
\item {\em Least-square regression}
\begin{align}\label{least square}
f(x):=\norm{Ax-b}^2,
\end{align}
where $A$ is an $n\times n$ matrix and $b$ is a vector in $\R^n.$ This problem is used for benchmarking derivative-free optimization methods in \cite{RiosSahinidis2013}.  It can be seen that $\nabla^2 f(x)=2A^*A\text{ for all }x\in\R^n,$ where $A^*$ is the transpose of $A$. Therefore, $\nabla f$ is Lipschitz continuous with the constant $L=2\norm{A^*A},$ where $\norm{M}:=\sigma_{\max}(M)$ is the largest singular value of the matrix $M\in\R^{n\times n}$.
\item {\em Nonconvex image restoration}
\begin{align}\label{nonconvex}
f(x):=\sum_{i=1}^n \log(1+(Ax-b)^2_i),
\end{align}
where $A$ is an $n\times n$ matrix and $b$ is a vector in $\R^n.$ This problem is considered in \cite[Section~5.5]{ThemelisStellaPatrinos2017} with a nonsmooth term added to the objective function. It is clear that $\nabla f$ is Lipschitz continuous with the constant $L=2\norm{A^*A}_\infty$, where
\begin{align*}
\norm{M}_\infty:=\max_{1\le i\le n}\sum_{j=1}^n|m_{ij}|\;\mbox{ for }\; M=[m_{ij}]_{i,j=\overline{1,n}}.
\end{align*}
\end{enumerate}
The aforementioned methods are tested on randomly generated datasets of different sizes. To be more specific, an $n\times n$ matrix $A$ and a vector $b\in\R^n$ are generated randomly with i.i.d. (independent and identically distributed) standard Gaussian entries. Behaviors of the methods are investigated in both the noise-free setting when the function value $f(x)$ is accessible at every $x\in\R^n$, and the noiseless setting when only $\phi(x):=f(x)+\xi(x)$ is accessible but not $f(x)$, where $\xi(x)$ is a uniformly distributed random variable given by $\xi(x)\sim U(-10^{-4},10^{-4}).$ We also test the methods for different dimensions $n\in\set{50,100}$. In total, the methods are tested for 8 different problems. All the methods  under consideration are executed until they reach the maximum number of function evaluations of $200n$. The numerical results are presented in Figure~\ref{fig:C1L 50 result} below, where $L$ stands for the least-square regression in \eqref{least square} and $N$ signifies the nonconvex image restoration in \eqref{nonconvex}.

The obtained results show that the momentum effect does improve the performance of all the derivative-free methods above. In the noiseless setting, DFn-fordif can be considered the best while typically enhancing the speed of the standard DF-fordif by a factor of $2$. In the noisy setting, the central difference is more stable than the forward one and show that  DFn-cendif is the best with about 2-times speed enhancement over the standard DF-cendif. Another observation from the presented results is that in the noisy setting, due to a simpler structure, solving least-square problems is easier than solving nonconvex image restoration problems while giving us better results for all the algorithms.\vspace*{-0.15in}

\section{Concluding Remarks and Future Research}\label{conc}

This paper designs and provides a comprehensive local and global convergence analysis of the novel inexact gradient decent method with momentum. Although the basic problem under consideration is given in the framework of smooth unconstrained optimization, it is shown that the developed algorithm can be used for some classes of nonsmooth constrained optimization problems written in the extended-real-valued format. Several specifications and applications of the main algorithm are also investigated with conducting numerical experiments for problems of derivative-free optimization.

Our future research on these topics will go in several directions. In particular, we plan to cover optimization problems with explicit constraints given by smooth and nonsmooth functions and generally nonconvex sets and also extend the algorithms to problems of stochastic optimization. A major attention will be paid to applications of the designed algorithms for practical models arising in data science, artificial intelligence, machine learning, statistics, etc.

\begin{figure}[H]
\centering
\includegraphics[width=.45\textwidth]{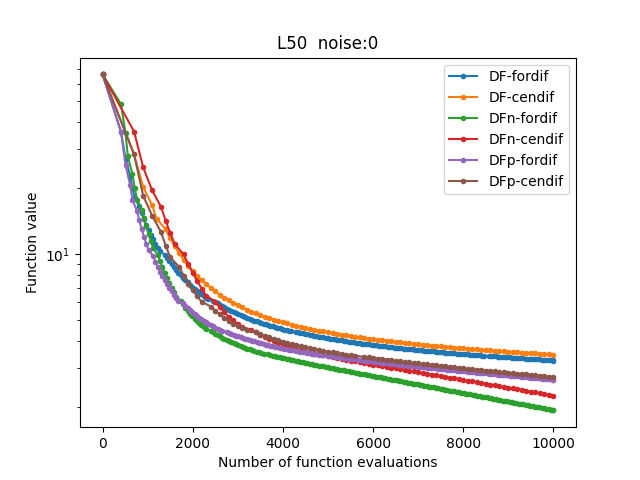}\quad
\includegraphics[width=.45\textwidth]{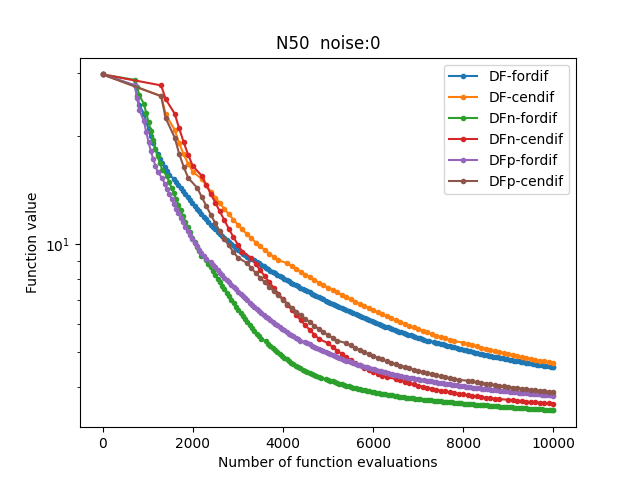}\quad

\includegraphics[width=.45\textwidth]{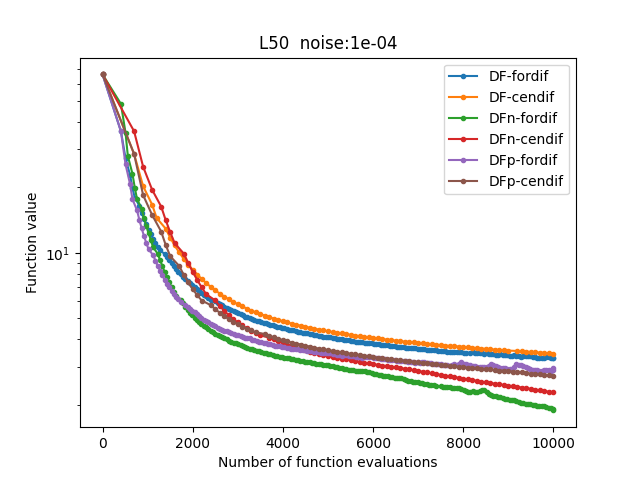}\quad
\includegraphics[width=.45\textwidth]{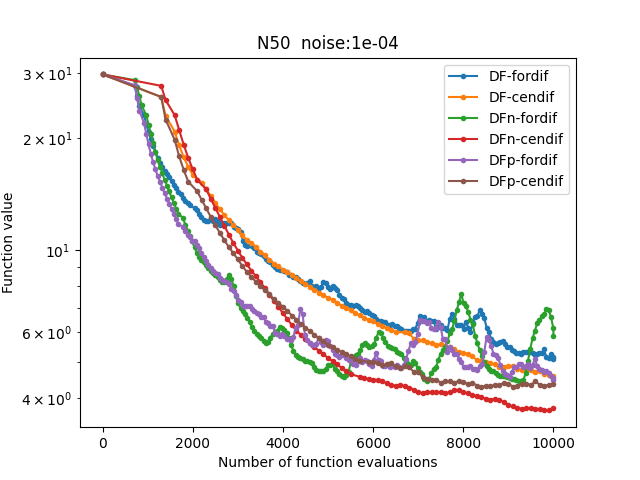}\quad

\includegraphics[width=.45\textwidth]{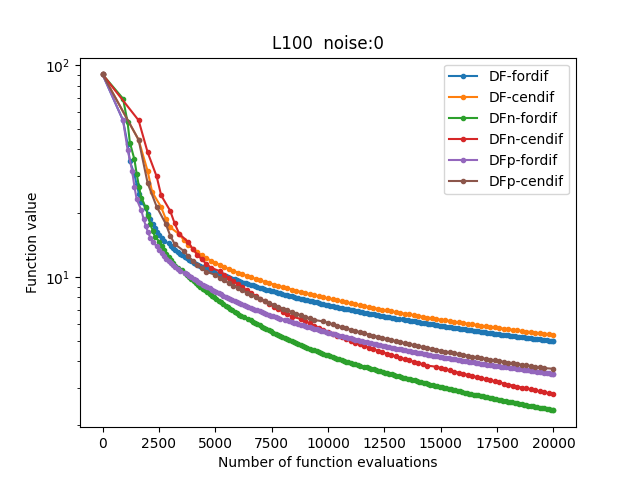}\quad
\includegraphics[width=.45\textwidth]{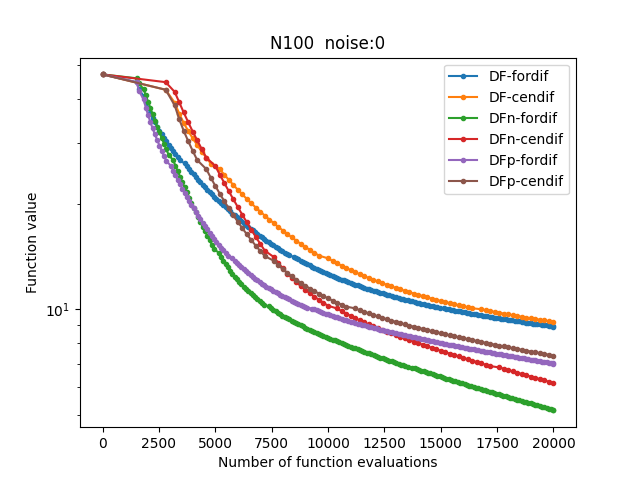}\quad

\includegraphics[width=.45\textwidth]{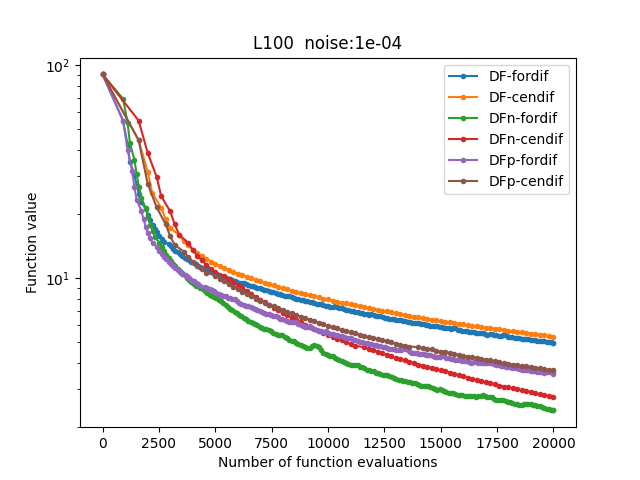}\quad
\includegraphics[width=.45\textwidth]{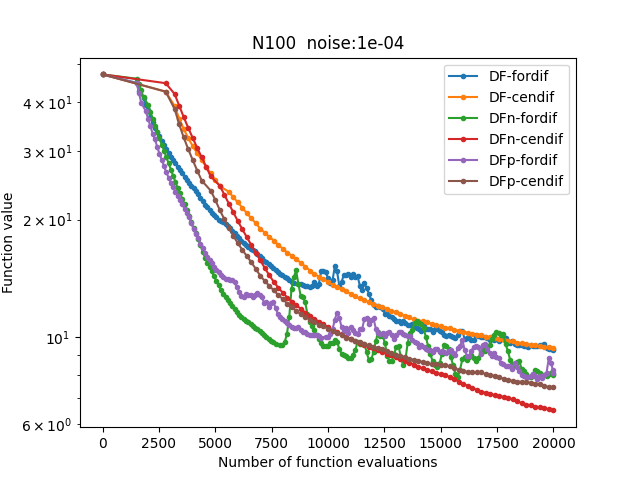}\quad
 \caption{Derivative-free least-square convex and nonconvex problems}\label{fig:C1L 50 result}
\end{figure}
\section*{}
\bibliographystyle{plain} 
\bibliography{bibtex}

\begin{thebibliography}{10}

\bibitem{AbsilMahonyAndrews2005}
P.-A. Absil, R.~Mahony, and B.~Andrews.
\newblock Convergence of the iterates of descent methods for analytic cost functions.
\newblock {\em SIAM J. Optim.}, 16:531--547, 2005.

\bibitem{AndriuschenkoFlammarion2022}
M.~Andriuschenko and N.~Flammarion.
\newblock Towards understanding sharpness-aware minimization.
\newblock {\em ICML}, pages 639--668, 2022.

\bibitem{AttouchBolteRedontSoubeyran2010}
H.~Attouch, J.~Bolt\'e, P.~Redont, and A.~Soubeyran.
\newblock Proximal alternating minimization and projection methods for nonconvex problems. an approach based on the {Kurdyka-\L ojasiewicz} property.
\newblock {\em Math. Oper. Res.}, 35:438--457, 2010.

\bibitem{AttouchBolteSvaiter2013}
H.~Attouch, J.~Bolt\'{e}, and B.~F. Svaiter.
\newblock Convergence of descent methods for definable and tame problems: Proximal algorithms, forward-backward splitting, and regularized {G}auss-{S}eidel methods.
\newblock {\em Math. Program.}, 137:91--129, 2013.

\bibitem{BeckTeboulle2009}
A.~Beck and M.~Teboulle.
\newblock A fast iterative shrinkage-thresholding algorithm for linear inverse problems.
\newblock {\em SIAM J. Imaging Sci.}, 2:183--202, 2009.

\bibitem{bento25}
G.~Bento, B.~S. Mordukhovich, T.~Mota, and Yu. Nesterov.
\newblock Convergence of descent optimization algorithms under {Polyak-\L ojasiewicz-Kurdyka} conditions.
\newblock {\em https://optimization-online.org/?p=29340}, 2025.

\bibitem{Cauchy1847}
A.~Cauchy.
\newblock Method g\'en\'eral pour la r\'esolution des systemes d'\'equations simulan\'ees.
\newblock {\em Comptes Rendus}, 25:536, 1847.

\bibitem{DavisDrusviatskiy2022}
D.~Davis and D.~Drusviatskiy.
\newblock Proximal methods avoid active strict saddles of weakly convex functions.
\newblock {\em Found. Comput. Math.}, 22:561--606, 2022.

\bibitem{ForetKleinerMobahiNeyshabur2021}
P.~Foret, A.~Kleiner, H.~Mobahi, and B.~Neyshabur.
\newblock Sharpness-aware minimization for efficiently improving generalization.
\newblock {\em ICLR}, 2021.

\bibitem{GilmoreKelley1995}
P.~Gilmore and C.~T. Kelley.
\newblock An implicit filtering algorithm for optimization of functions with many local minima.
\newblock {\em SIAM J. Optim.}, 5:269--285, 1995.

\bibitem{Guller1992}
O.~Guller.
\newblock New proximal point algorithms for convex minimization.
\newblock {\em SIAM J. Optim.}, 2:649--664, 1992.

\bibitem{Ioffe2009}
A.~D. Ioffe.
\newblock An invitation to tame optimization.
\newblock {\em SIAM J. Optim.}, 19:1894--1917, 2009.

\bibitem{IzmailovSolodov2014}
A.~F. Izmailov and M.~V. Solodov.
\newblock {\em Newton-Type Methods for Optimization and Variational Problems}.
\newblock Springer, New York, 2014.

\bibitem{Josz2023}
C.~Josz.
\newblock Global convergence of the gradient method for functions definable in o-minimal structures.
\newblock {\em Math. Program.}, 2023.

\bibitem{karimi}
H.~Karimi, J.~Nutini, and M.~Schmidt.
\newblock Linear convergence of gradient and proximal-gradient methods under the polyak-\l ojasiewicz condition.
\newblock {\em Machine Learning and Knowledge Discovery in Databases, Part~1}, pages 795--811, 2016.

\bibitem{KhanhKhoaMordukhovichPhat}
P.~D. Khanh, V.~V.~H. Khoa, B.~S. Mordukhovich, and V.~T. Phat.
\newblock Second-order subdifferential optimality conditions in nonsmooth optimization.
\newblock {\em SIAM J. Optim.}, 35(2), 2025.

\bibitem{KhanhLuongMordukhovichTran}
P.~D. Khanh, H.-C. Luong, B.~S. Mordukhovich, and D.~B. Tran.
\newblock Fundamental convergence analysis of sharpness-aware minimization.
\newblock {\em Proceedings of NeurIPS 2024}, 2024.

\bibitem{KhanhMordukhovichPhat2023}
P.~D. Khanh, B.~S. Mordukhovich, and V.~T. Phat.
\newblock Variational convexity of functions and variational sufficiency in optimization.
\newblock {\em SIAM J. Optim}, 33:1211--1158, 2023.

\bibitem{KhanhMordukhovichPhatTran}
P.~D. Khanh, B.~S. Mordukhovich, V.~T. Phat, and D.~B. Tran.
\newblock Inexact proximal methods for weakly convex functions.
\newblock {\em J. Global Optim. DOI: 10.1007/s10898-024-01460-7}, 2023.

\bibitem{KhanhMordukhovichTrana}
P.~D. Khanh, B.~S. Mordukhovich, and D.~B. Tran.
\newblock Globally convergent derivative-free methods in nonconvex optimization with and without noise.
\newblock {\em https://optimization-online.org/?p=26889}, 2024.

\bibitem{KhanhMordukhovichTran2023}
P.~D. Khanh, B.~S. Mordukhovich, and D.~B. Tran.
\newblock Inexact reduced gradient methods in smooth nonconvex optimization.
\newblock {\em J. Optim. Theory Appl.}, 203:2138–2178, 2024.

\bibitem{KhanhMordukhovichTran}
P.~D. Khanh, B.~S. Mordukhovich, and D.~B. Tran.
\newblock A new inexact gradient descent method with applications to nonsmooth convex optimization.
\newblock {\em Optim. Methods Softw. DOI:10.1080/10556788.2024.2322700}, 2024.

\bibitem{Kim2021}
D.~Kim.
\newblock Accelerated proximal point method for maximally monotone operators.
\newblock {\em Math. Program.}, 190:57--87, 2021.

\bibitem{Korpelevich1976}
G.~M. Korpelevich.
\newblock An extragradient method for finding saddle points and for other problems.
\newblock {\em Ekon. Mat. Metod.}, 12:747--756, 1976.

\bibitem{LagariasReedsWrightWright1998}
J.~C. Lagarias, J.~A. Reeds, M.~H. Wright, and P.~E. Wright.
\newblock Convergence properties of the nelder--mead simplex method in low dimensions.
\newblock {\em SIAM J. Optim.}, 9:112--147, 1998.

\bibitem{Laslo}
S.~C. L\'asl\'o.
\newblock Convergence rates for an inertial algorithm of gradient type associated to a smooth non-convex minimization.
\newblock {\em Math. Program.}, 190:285--329, 2021.

\bibitem{LiPong2018}
G.~Li and T.~K. Pong.
\newblock Calculus of the exponent of {Kurdyka-\L ojasiewicz} inequality and its applications to linear convergence of first-order methods.
\newblock {\em Found. Comput. Math.}, 18:1199--1232, 2018.

\bibitem{LinKongStichJaggi2020}
T.~Lin, L.~Kong, S.~U. Stich, and M.~Jaggi.
\newblock Extrapolation for large-batch training in deep learning.
\newblock {\em ICML}, 2020.

\bibitem{Mordukhovich2006}
B.~S. Mordukhovich.
\newblock {\em Variational Analysis and Generalized Differentiation, I: Basic Theory, II: Applications}.
\newblock Springer, Berlin, 2006.

\bibitem{Mordukhovich2018}
B.~S. Mordukhovich.
\newblock {\em Variational Analysis and Applications}.
\newblock Springer, Cham, Switzerland, 2018.

\bibitem{Nesterov1983}
Yu. Nesterov.
\newblock A method of solving a convex programming problem with convergence rate $\mathcal{O}(1/k^2)$.
\newblock {\em Soviet Math. Dokl}, 1983.

\bibitem{Nesterov2013}
Yu. Nesterov.
\newblock Gradient methods for minimizing composite functions.
\newblock {\em Math. Program.}, 140:125--161, 2013.

\bibitem{Nesterov2018}
Yu. Nesterov.
\newblock {\em Lectures on Convex Optimization}.
\newblock Springer, Cham, Switzerland, 2018.

\bibitem{NesterovSpokoiny2017}
Yu. Nesterov and V.~Spokoiny.
\newblock Random gradient-free minimization of convex functions.
\newblock {\em Found. Comput. Math.}, 17:527--566, 2017.

\bibitem{NguyenPauwelsRichardSutter2018}
T.~P. Nguyen, E.~Pauwels, E.~Richard, and B.~W. Sutter.
\newblock Extragradient method in optimization: convergence and complexity.
\newblock {\em J. Optim. Theory Appl.}, 176:137--162, 2018.

\bibitem{Polyak1964}
B.~T. Polyak.
\newblock Some methods of speeding up the convergence of iteration methods.
\newblock {\em USSR Comput. Math. Math. Phys.}, 4:1--17, 1964.

\bibitem{Polyak1987}
B.~T. Polyak.
\newblock {\em Introduction to Optimization}.
\newblock Optimization Software, New York, 1987.

\bibitem{RiosSahinidis2013}
L.~M. Rios and N.~V. Sahinidis.
\newblock Derivative-free optimization: a review of algorithms and comparison of software implementations.
\newblock {\em J. Global Optim.}, 56:1247--1293, 2013.

\bibitem{Rockafellar1976a}
R.~T. Rockafellar.
\newblock Augmented {L}agrangians and applications of the proximal point algorithm in convex programming.
\newblock {\em Math. Oper. Res.}, 1:97--116, 1976.

\bibitem{Rockafellar1976}
R.~T. Rockafellar.
\newblock Monotone operators and the proximal point algorithm.
\newblock {\em SIAM J. Control. Optim.}, 14:877--898, 1976.

\bibitem{Rockafellar2021}
R.~T. Rockafellar.
\newblock Advances in convergence and scope of the proximal point algorithm.
\newblock {\em J. Nonlinear Convex Anal.}, 22:2347--2375, 2021.

\bibitem{Rockafellar2023}
R.~T. Rockafellar.
\newblock Convergence of augmented {L}agrangian methods in extensions beyond nonlinear programming.
\newblock {\em Math. Program.}, 199:375--420, 2023.

\bibitem{RockafellarWets}
R.~T. Rockafellar and R.~J.-B. Wets.
\newblock {\em Variational Analysis}.
\newblock Springer, Berlin, 1998.

\bibitem{ShiDuJordanSu2022}
B.~Shi, S.~S. Du, M.~I. Jordan, and W.~J. Su.
\newblock Understanding the acceleration phenomenon via high-resolution differential equations.
\newblock {\em Math. Program.}, 195:79--148, 2022.

\bibitem{ShiXuanOztoprakNocedal2023}
H.~M. Shi, M.~Q. Xuan, F.~Oztoprak, and J.~Nocedal.
\newblock On the numerical performance of finite-difference-based methods for derivative-free optimization.
\newblock {\em Optim. Methods Softw.}, 38:289--311, 2023.

\bibitem{ThemelisStellaPatrinos2017}
A.~Themelis, L.~Stella, and P.~Patrinos.
\newblock Forward--backward quasi-{N}ewton methods for nonsmooth optimization problems.
\newblock {\em Comput. Optim. Appl.}, 67:443--487, 2017.

\bibitem{WangWang2023}
X.~Wang and Z.~Wang.
\newblock Calculus rules of the generalized concave {Kurdyka--\L ojasiewicz} property.
\newblock {\em J. Optim. Theory Appl.}, 2023.

\bibitem{WenChenPong2017}
B.~Wen, X.~Chen, and T.~K. Pong.
\newblock Linear convergence of proximal gradient algorithm with extrapolation for a class of nonconvex nonsmooth minimization problems.
\newblock {\em SIAM J. Optim.}, 27:124--145, 2017.

\bibitem{ZavrievKostyuk1993}
S.~Zavriev and F.~Kostyuk.
\newblock Heavy ball method in nonconvex optimization problems.
\newblock {\em Comput. Math. Model.}, 4:336--341, 1993.

\end{thebibliography}

\end{document}